\newtheorem{theorem}{Theorem}[section]
\newtheorem{definition}[theorem]{Definition}
\newtheorem{lemma}[theorem]{Lemma}
\newtheorem{proposition}[theorem]{Proposition}
\newtheorem{corollary}[theorem]{Corollary}
\newcommand{\hdim}{\dim_{\mathsf{H}}}
\newcommand{\R}{{\mathbb R}}
\newcommand{\C}{{\mathcal C}}
\newcommand{\Z}{{\mathbb{Z}}}
\newcommand{\N}{{\mathbb N}}
\newcommand{\X}{\mathbb{X}}
\newcommand{\W}{\mathbb{W}}
\newcommand{\Y}{\mathbb{Y}}
\newcommand{\F}{{\mathcal F}}
\newcommand{\bt}{\mathbf{T}}
\newcommand{\om}{\omega}
\newcommand{\pom}{{(\omega)}}
\def\R{\mathbb{R}}
\def\N{\mathbb{N}}
\def\F{\mathcal F}
\newcommand{\eps}{\varepsilon}
\newcommand{\oeta}{\overline{\eta}}
\newcommand{\otheta}{\overline{\vartheta}}
\newcommand{\opi}{\overline{\Pi}}
\newcommand{\supp}{\text{supp}}
\begin{document}

\title{$L^q$ dimensions and projections of random measures}

\author{Daniel Galicer}

\address{Departamento de Matem\'{a}tica and IMAS/CONICET\\
Facultad de Cs. Exactas y
Naturales\\
Universidad de Buenos Aires \\
Buenos Aires, Ciudad Universitaria, Pab. I (1428), Argentina.}

\email{dgalicer@dm.uba.ar}

\author{Santiago Saglietti}

\address{Departamento de Matem\'{a}tica and IMAS/CONICET\\
Facultad de Cs. Exactas y
Naturales\\
Universidad de Buenos Aires \\
Buenos Aires, Ciudad Universitaria, Pab. I (1428), Argentina.
}

\email{ssaglie@dm.uba.ar}

\author{Pablo Shmerkin}

\address{Departamento de Matem\'{a}ticas y Estad\'{\i}sticas and CONICET\\
Universidad Torcuato Di Tella\\
Av. Figueroa Alcorta 7350 (C1428BCW), Buenos Aires, Argentina.}

\urladdr{http://www.utdt.edu/profesores/pshmerkin}
\email{pshmerkin@utdt.edu}

\author{Alexia Yavicoli}

\address{Departamento de Matem\'{a}tica and IMAS/CONICET\\
Facultad de Cs. Exactas y
Naturales\\
Universidad de Buenos Aires \\
Buenos Aires, Ciudad Universitaria, Pab. I (1428)\\ Argentina.}

\email{ayavicoli@dm.uba.ar}

\thanks{D.G. was supported by projects CONICET PIP 0624, PICT 2011-1456 and
UBACyT 20020130300057BA. S.S. was supported by projects PICT 2012-2744 and UBACyT 200120120100151GC. P.S. was supported by projects PICT 2011-4036 and PICT 2013-1393 (ANPCyT). A.Y.  was supported by projects UBACyT 2014-2017 20020130100403BA and PIP 11220110101018 (CONICET)}

\subjclass[2000]{Primary 28A80, Secondary 28A78, 37H99}
\keywords{$L^q$ dimensions, projections, convolutions, random measures, self-similar measures}

\begin{abstract}
 We prove preservation of $L^q$ dimensions (for $1<q\le 2$) under all orthogonal projections for a class of random measures on the plane, which includes (deterministic) homogeneous self-similar measures and a well-known family of measures supported on $1$-variable fractals as special cases. We prove a similar result for certain convolutions, extending a result of Nazarov, Peres and Shmerkin. Recently many related results have been obtained for Hausdorff dimension, but much less is known for $L^q$ dimensions.
\end{abstract}

\maketitle

\section{Introduction}

In recent years there has been great interest in understanding the size of linear (and non-linear) images of sets and measures of dynamical and arithmetic origin. Here ``size'' may refer to some fractal dimension, or to Lebesgue measure/absolute continuity.

Even if one is concerned with sets, the proofs usually involve measures supported on them; in this article, we deal primarily with measures (by a measure we always mean a Borel locally finite measure on some Euclidean space). If $\mu$ is a measure on a space $X$ and $f:X\to Y$ is a map, we denote the push-forward of $\mu$ via $f$ by $f\mu$, that is, $f\mu(B)=\mu(f^{-1}B)$ whenever $f^{-1}(B)$ is measurable. A guiding heuristic principle is that if $\mu$ is a measure on $\R^d$, $\Pi:\R^d\to\R^k$ is a ``nice'' Lipschitz map and $\dim$ is some notion of dimension for measures, then ``typically'' $\Pi\mu$ is ``as large as possible'' in the sense that $\dim\Pi\mu=\dim\mu$ if $\dim\mu\le k$, and $\dim\Pi\mu=k$ if $\dim\mu>k$ (in the latter case, one expects $\Pi\mu$ to also be absolutely continuous).

A precise version of this heuristic is given by Marstrand's projection theorem (and its variants) which, for the case of measures, says that, for any measure $\mu$ on $\R^d$, there is an equality $\dim\Pi\mu = \min(\dim\mu,k)$ for almost all linear maps $\Pi:\R^d\to\R^k$, whenever $\dim$ is either Hausdorff or $L^q$ dimension ($1< q\le 2$); these notions of dimension will be defined later. See e.g. \cite[Chapter 9]{Mattila95} and \cite[Theorem 1.1]{HuntKaloshin97}. However, for measures with a dynamical or arithmetic structure, such as self-similar measures or measures invariant under some algebraic dynamical system, one would like to say more, ideally finding the precise set of exceptional linear maps $\Pi$.

Early results of this type for sets were obtained in \cite{Moreira98, PeresShmerkin09, FJS10}. Recall that the (lower) Hausdorff dimension of a measure $\mu$ is
\[
\hdim\mu = \inf\{ \hdim A: \mu(A)>0\},
\]
where $\hdim A$ is the Hausdorff dimension of $A$.  A general method to bound the Hausdorff dimension of projected measures was developed in \cite{HochmanShmerkin12}, with variants and applications given in \cite{FalconerJin14, Almarza14, Farkas14, FFS15}. Among other things, the equality $\hdim\Pi\mu=\min(\hdim \mu,k)$ is established for many classes of measures (satisfying certain necessary assumptions), including self-similar measures, more general random cascades on self-similar sets,  products of $\times m$-invariant measures on $[0,1]$, and Bernoulli and Gibbs measures for the natural symbolic coding of the $(\times m,\times n)$-toral automorphisms, and \emph{all} linear maps $\Pi$ (apart from obvious exceptions). A recent breakthrough on the dimensions of self-similar measures \cite{Hochman14} also has applications on the dimension of projections, see \cite{ShmerkinSolomyak15}; this work again deals with Hausdorff dimensions of measures.

Although Hausdorff dimension is no doubt highly relevant, there are many other concepts of dimension of a measure which are also important both mathematically and in applications. Chief among them is the one-dimensional parameter family of dimensions known as $L^q$ dimensions: let
\begin{equation} \label{eq:def-Cq}
\mathcal{C}^q_{\mu} (n) := \sum_{I\in\mathfrak{D}_n} \left(\mu(I)\right)^q,
\end{equation}
where $\mathfrak{D}_n$ is the family of dyadic cubes $\{2^{-n} \cdot ([0,1)^d + j) : j \in \Z^d\}$ in $\R^d$. For $q>0, q\neq 1$, the lower $L^q$ dimension $\underline{D}_q(\mu)$ is the (lower) suitably normalized scaling exponent of $\mathcal{C}^q_\mu(n)$ as $n\to\infty$:
\[
\underline{D}_q(\mu) = \liminf_{n\to\infty} \frac{\log \mathcal{C}^q_{\mu} (n)}{-n(q-1)}.
\]
For simplicity we always take logarithms to base $2$, unless otherwise noted. The upper $L^q$ dimension $\overline{D}_q$ is defined analogously. When the limit in question exists, it is denoted $D_q(\mu)$; in this case we say that the $L^q$ dimension exists. This family of dimensions measures the degree of singularity of a measure according to its global fluctuations, and are a central ingredient of the multifractal formalism. Of special relevance is the value $q=2$; $D_2$ is also known as the \emph{correlation dimension} of $\mu$. This is partly because (lower) correlation dimension can also be defined in terms of energies:
\[
\underline{D}_2\mu = \sup\left\{ s\ge 0  : \int\int |x-y|^{-s} \,d\mu(x)\,d\mu(y) <\infty\right\}.
\]

The map $q\mapsto \underline{D}_q\mu$ is non-increasing, and $\underline{D}_q\mu \le \hdim\mu\le \overline{D}_{q'}$ for $q'<1<q$ (see e.g. \cite{FLR02}). In general, $q\mapsto D_q$ may be strictly decreasing (this is a reflection of the multifractality of $\mu$), but it may also be constant. For example, if $\mu$ is Ahlfors-regular with exponent $d$ (that is, if $C^{-1}\, r^d \le \mu(B(x,r))\le C\, r^d$ for all $x\in\supp(\mu)$) then $D_q\mu=\hdim\mu=d$ for all $q$. For many measures of dynamical origin, such as self-similar measures, the limit in the definition of $D_q$ is known to exist, see \cite{PeresSolomyak00}.

The only previous result on $L^q$ dimensions of projected measures was obtained in \cite{NPS12}. There it is proved that if $\mu,\nu$ are self-similar measures satisfying certain natural assumptions, then for any $q\in (1,2]$,
\[
D_q(\Pi(\mu\times\nu)) = \min(D_q(\mu\times\nu),1)
\]
for all orthogonal projections $\Pi$ onto lines, other than the principal ones (which are clearly exceptional for products).

In this article we prove preservation of $L^q$ dimensions for $q\in (1,2]$ under all projections, for a class of planar measures which include certain self-similar and stochastically self-similar measures, and for certain products of two measures. Precise definitions are given in the next section. Among other applications, we improve upon the main result of \cite{NPS12} in several different directions, and obtain a different (and somewhat more elementary) proof of a projection result from \cite{HochmanShmerkin12} and sharpen it in some special cases.

We follow the general approach of \cite{NPS12}, with suitable variants. A central element in the main result of \cite{NPS12} is the existence of certain subadditive cocycle over an irrational rotation. In the present setting, there is also a subadditive cocycle at the core of the proofs, but the base transformation is now a circle extension of a shift space. Most of the additional work is then concerned with studying this somewhat more complex dynamical object. Nevertheless, we also introduce some generalizations and clarifications that are valid also in the deterministic setting of
\cite{NPS12}.

\section{Main results}

\subsection{The model}
\label{subsec:themodel}

Our general setup is as follows. A \emph{rule} is an iterated function system $(f_1,\ldots,f_k)$, where each map $f_j$ is a strictly contractive similarity on $\R^d$ (the ambient dimension $d$ will always be either $1$ or $2$, later on we will impose an additional homogeneity assumption on the rules). We will work with a finite set of $N$ rules $(f_1^{(i)}, \dots, f_{k_i}^{(i)})$, $i\in\{1,\ldots,N\}$. Since the maps $f_j^{(i)}$ are uniformly contractive, if $R>0$ is sufficiently large, then $f_j^{(i)}(B[0,R])\subset B[0,R]$ for all $i\in\{1,\ldots,N\}$, $j\in\{1,\ldots, k_i\}$, where $B[0,R]$ stands for the closed ball of radius $R$ centered at the origin.

Given a sequence $\omega=(\omega_n)_{n \in \N} \in Y:=\{1,\dots,N\}^{\N}$ we define the space of words of length $n$ (possibly with $n=\infty$) with respect to $\omega$ by the formula
\[
\mathbb{X}^{(\omega)}_n:=  \prod_{j=1}^n \{1,\dots,k_{\omega_j}\}.
\]
Note that all $\mathbb{X}^{(\omega)}_n$ are subsets of a common tree $\X_n:=\prod_{j=1}^n \{1,\ldots,k_{\max}\}$, where $k_{\max}=\max_{i=1}^N k_i$.

For each $n \in \N$ and $u \in \X_n^\pom$ we consider the ball
\[
B^{(\omega)}_{u}=f^{(\omega)}_u(B[0,R]),
\]
where  $f^{(\omega)}_u=f^{(\omega_1)}_{u_1} \circ \dots \circ f^{(\omega_n)}_{u_n}$. We define a compact set
\[
\mathcal{C}^{(\omega)}:= \bigcap_{n \in \N} \bigcup_{u \in \mathbb{X}^{(\omega)}_n} B^{(\omega)}_{u}.
\]
Note that, for every $n$, we have the inclusion $B^{(\omega)}_{ul} \subset B^{(\omega)}_{u}$, for each $u \in \mathbb{X}^{(\omega)}_n$ and  $l \in \{1,\dots,k_{\omega_{n+1}} \}$ (where $ul$ denotes the concatenation of $u$ and $l$). In other words, these disks are nested. Moreover, their diameters tend to zero uniformly. Alternatively, $\C^{\pom}=\Delta_{\omega}\left(\mathbb{X}^{(\omega)}_\infty\right)$, where $\Delta_\omega$ is the \emph{coding map} given by
\[
 \{\Delta_\om(u)\} = \bigcap_{n=1}^\infty B^\pom_{u|n},
\]
where $u|n$ is the restriction of the infinite word $u$ to its first $n$ coordinates. Given $u\in\X_n^\pom$, we also define the \emph{cylinder} $[u]_\om$ as the set of infinite words in $\X_\infty^\pom$ that start with $u$, and note that $\Delta_\om([u]_\om)\subset B_u^\pom$.

We remark that we do not assume that $\{ B^{(\omega)}_{u}:u \in \mathbb{X}^{(\omega)}_n\}$ are disjoint or any other separation condition. Moreover, we do not exclude the possibility that there is a single rule $(N=1)$, in which case $\mathcal{C}^{(\omega)}$ is a deterministic self-similar set.

Even though $\mathcal{C}^{(\omega)}$ is defined for every $\omega$, our results will be probabilistic in nature, and we will be drawing $\omega$ according to an invariant ergodic probability measure $\mu$ for the left-shift $\bt$ on $Y$.

Similar models have been considered in the literature, sometimes under the names ``homogeneous random fractal'' or $1$-variable fractal, see e.g. \cite{hambly92, stenflo01, BHS08}.

We will not be interested in the sets $\mathcal{C}^{\pom}$ themselves, but rather in measures supported on them. For each $i$, let $p_i=(p_1^{(i)},\ldots,p_{k_i}^{(i)})$ be a probability vector. On each $\X_\infty^\pom$ we can then define the product measure
\[
 \oeta^\pom =  \prod_{n=1}^\infty p_{\omega_i}.
\]
The projection of $\oeta^\pom$ via the coding map is a Borel probability measure  $\eta^\pom$ on $\C^\pom$. In the deterministic case $N=1$, this is simply a self-similar measure on $\C^\pom$. The random case arises naturally, even if a priori one is interested only in deterministic self-similar measures. For example, conditional measures on slices of (deterministic) self-similar and self-affine measures often have this form, and one can decompose an arbitrary self-similar measure as $\int \eta^\pom\, d\mu(\om)$ for an appropriate choice of weights $p_i$ and measure $\mu$; see Section \ref{subsec:hochman} below.

Although the family of product measures just described provides our main class of examples, the proofs extend to more general families $\{ \oeta^\pom: \om\in Y\}$  of Borel probability measures on $\mathbb{X}_\infty$ satisfying the following conditions:
\begin{enumerate}
\item[(a)] Each $\oeta^{(\omega)}$ is supported on $\X_\infty^\pom$.
\item[(b)]  The map $\omega \mapsto \oeta^{(\omega)}$ is continuous (considering the weak topology on the space of Borel probability measures on $\X_\infty$, where $\X_\infty$ is endowed with the product topology). In other words, for any continuous function $g$ on $\X_\infty$, we have
$$
\lim_{\omega' \rightarrow \omega} \int g(u) d\oeta^{(\omega')}(u) = \int g(u) d\oeta^{(\omega)}(u).
$$
\item[(c)]  There exists $K \geq 1$ such that for every $\omega \in Y$ the measure $\oeta^{(\omega)}$ satisfies
\begin{equation}\label{eq:measure-sub-product}
\oeta^{(\omega)}\left([uv]_\om\right)  \leq K \,\oeta^{(\omega)}([u]_\om) \, \oeta^{\left(\mathbf{T}^n\omega\right)}([v]_{\mathbf{T}^n\om}),
\end{equation}
for any $u \in  \mathbb{X}^{(\omega)}_n$, $v\in \X^{(\mathbf{T}^n \om)}_m$.
\end{enumerate}

When $\oeta^\pom$ is a product measure as above, this condition holds with equality and $K=1$. This suggests that, in general, the measures $\oeta^\pom$ satisfying (2) (or rather their projections $\eta^\pom$ under $\Delta_\om$) can be thought of as satisfying some kind of ``sub-self-similarity''. In the deterministic case (in which there is a unique measure $\oeta$), Gibbs measures for H\"{o}lder potentials also satisfy \eqref{eq:measure-sub-product}.
We also note that, since cylinders generate the Borel $\sigma$-algebra of each space $\X_\infty^\pom$, it follows from \eqref{eq:measure-sub-product} that
\begin{equation} \label{eq:measure-submultiplicative}
 \oeta^{(\omega)}\left(\{uy:y\in A\}\right)  \leq K\, \oeta^{(\omega)}([u]_\om) \, \oeta^{\left(\mathbf{T}^n\omega\right)}(A),
\end{equation}
for any $u \in  \mathbb{X}^{(\omega)}_n$ and Borel set $A\subset\X^{(\mathbf{T}^n\om)}_\infty$.

\subsection{$L^q$ dimensions of projections}

Now we specialize to $d=2$ and assume that the rules have the form $\{f_1^{(i)}, \dots, f_{k_i}^{(i)}\}$, where  the function $f_j^{(i)}:\mathbb{R}^2 \to \mathbb{R}^2$ is a similarity defined by
\begin{equation} \label{eq:form-rules}
f_j^{(i)}(x):=\lambda_i R_{\alpha_i} x + t_{j}^{i},
\end{equation}
where $\lambda_i \in (0,1)$, $t_{j}^{i} \in \mathbb{R}^2$ and $R_{\alpha_i}$ is the rotation matrix of angle $\alpha_i\in [0,2\pi)$. In other words, each rule is a \emph{homogeneous} IFS (only the translations differ).

We consider the unit circle $S^1$ endowed with the corresponding normalized Haar measure $\mathcal{L}$. Furthermore, define the continuous map $\alpha : Y \rightarrow S^1$ by the formula $\alpha(\omega):=e^{-i\alpha_{\omega_1}}$, and the skew-product map $\mathbf{S}$ on $Y\times S^1$ as
\begin{equation} \label{eq:def-skew-product}
\mathbf{S}(\omega,v) = \Big(\mathbf{T}(\omega), \alpha(\om)v  \Big).
\end{equation}

Recall that $\om$ is said to be $\mu$-\textit{generic} if $\frac1n\sum_{i=1}^n \delta_{\bt^i \om}$ converges to $\mu$ (here, and throghout the paper, convergence of probability measures is understood to be weak convergence). The orthogonal projection onto the line generated by $v\in S^1$ (identified with $\R$) is denoted $\Pi_v$, i.e. $\Pi_v(x,y)= \langle (x,y), v\rangle$.

We can now state our first main theorem.
\begin{theorem} \label{thm:main-projections}
Suppose $N$ rules of the form \eqref{eq:form-rules} are given. Let $\mu$ be a $\bt$-invariant and ergodic measure on $\{1,\ldots,N\}^\N$.  Let $\{ \oeta^\pom\}_{\om\in\{1,\ldots,N\}^\N}$ be a family of measures satisfying (a)-(c) above, and write $\eta^\pom=\Delta_\om\oeta^\pom$ for the projection of $\oeta$ onto $\C^\pom$. Assume furthermore that the product measure $\mu\times\mathcal{L}$ is ergodic for the skew-product $\mathbf{S}$ defined in \eqref{eq:def-skew-product}.

Then for each $q\in (1,2]$ there is a number $D(q)$, such that for $\mu$-almost all $\om$ it holds that $D_q(\eta^\pom)=D(q)$, and
\begin{equation} \label{eq:main-result}
D_q(\Pi_v\eta^\pom) = \min(D(q),1) \quad\text{for all }v\in S^1.
\end{equation}
Furthermore, the convergence of $-\tfrac{\log \mathcal{C}_{\Pi_v\eta^\pom}^q(n)}{n(q-1)}$ to $D_q(\Pi_v\eta^\pom)$ is uniform in $v\in S^1$.

If $D_q(\eta^\pom)=D(q)$ for all $\mu$-generic points $\om$, or if $D(q)\ge 1$, then the above conclusions hold for all $\mu$-generic $\om$.
\end{theorem}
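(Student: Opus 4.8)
The plan is to deduce the conclusion \eqref{eq:main-result} from a single subadditive cocycle estimate on the skew-product base $(Y \times S^1, \mathbf{S}, \mu \times \mathcal{L})$. Concretely, for a fixed $q \in (1,2]$ I would set, for each $(\om,v)$ and each scale $n$, a quantity $\phi_n(\om,v)$ measuring (the logarithm of) the $q$-th moment sum $\mathcal{C}^q_{\Pi_v \eta^\pom}(n)$ restricted to the $n$-th generation of balls $B_u^\pom$; using condition (c) in the form \eqref{eq:measure-submultiplicative} and the homogeneity of the rules \eqref{eq:form-rules} (which is what makes the projection direction $v$ transform under $\mathbf{S}$ exactly as in \eqref{eq:def-skew-product}), one checks that $\phi_{n+m}(\om,v) \le \phi_n(\om,v) + \phi_m(\mathbf{S}^n(\om,v)) + C$ for a constant $C$ absorbing the overlap constant $K$ and bounded geometric factors. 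Kingman's subadditive ergodic theorem, together with the assumed ergodicity of $\mu \times \mathcal{L}$ for $\mathbf{S}$, then yields a constant $-D(q)(q-1)$ (say) such that $\tfrac1n \phi_n(\om,v) \to -D(q)(q-1)$ for $(\mu\times\mathcal{L})$-a.e.\ $(\om,v)$, and in particular $D_q(\Pi_v \eta^\pom) = \min(D(q),1)$ for a.e.\ such pair; the identification of the limit with $\min(D(q),1)$ (rather than something smaller) uses the transversality/Marstrand-type input already implicit in the $\liminf$ bound $\underline{D}_q(\Pi_v\eta^\pom) \le \min(D_q(\eta^\pom),1)$ together with a matching lower bound coming from the cocycle being genuinely additive up to the overlap constant. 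This part I would expect to be developed in detail earlier in the paper; here I only need to invoke it.

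The substance of the \emph{final} statement is the upgrade from ``for $\mu$-a.e.\ $\om$, for all $v$'' to ``for all $\mu$-generic $\om$'', and its proof should rest on the uniformity already asserted in the theorem: the convergence of $-\tfrac{\log \mathcal{C}^q_{\Pi_v\eta^\pom}(n)}{n(q-1)}$ to its limit is uniform in $v \in S^1$. First I would observe that $\om \mapsto \log \mathcal{C}^q_{\Pi_v \eta^\pom}(n)$ is, for each fixed $n$ and $v$, continuous in $\om$ (this uses condition (b), the weak continuity of $\om \mapsto \oeta^\pom$, together with continuity of the coding maps $\Delta_\om$ in $\om$ and of the similarities in \eqref{eq:form-rules}), and in fact jointly continuous in $(\om,v)$; hence $\frac1n \phi_n$ is a sequence of continuous functions on the compact space $Y \times S^1$. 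If this sequence converges \emph{uniformly} on $Y \times S^1$, then the limit $(q-1)$-normalised function is continuous and constant on a dense set (the a.e.\ set above is dense since $\mu$ has full support on its support — and one restricts attention to $\om$ in $\supp\mu$), hence constant everywhere equal to $-\min(D(q),1)(q-1)$, giving the conclusion for \emph{every} $\om \in \supp\mu$ and a fortiori for every $\mu$-generic $\om$.

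To make the last paragraph work I must actually produce the uniform convergence, and this is where I expect the main obstacle to lie. Uniformity in $v$ should follow from a standard real-analytic trick: $\mathcal{C}^q_{\Pi_v\eta^\pom}(n)$ is, up to bounded multiplicative error, comparable to $\int |\widehat{\Pi_v \eta^\pom}(\xi)|$-type integrals or to a sum over dyadic intervals whose dependence on $v$ is controlled (Lipschitz on the logarithmic scale, uniformly in $n$ after the $-n(q-1)$ normalisation, because moving $v$ by $2^{-n}$ barely changes the partition of the $n$-th generation balls). Combined with equicontinuity in $\om$ over the compact fibre and a covering argument, pointwise a.e.\ convergence of the continuous functions $\frac1n\phi_n$ upgrades to uniform convergence on $\supp\mu \times S^1$; this is the analogue of the argument in \cite{NPS12} for the irrational-rotation base, now carried out over the circle extension of the shift. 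The uniformity in $\om$ is the genuinely new difficulty compared to \cite{NPS12}: one needs that the overlap constant $K$ in \eqref{eq:measure-sub-product} and the geometric bounds are independent of $\om$ (they are, by hypothesis and by the uniform contractivity fixing the radius $R$), and that the subadditive cocycle is \emph{continuous} on the base, so that the Kingman limit is not merely measurable but coincides with a continuous function — which on a transitive (here: uniquely-ergodic-in-spirit, though only ergodicity is assumed) compact system forces it to be constant. The alternative branch ``$D(q) \ge 1$'' is easier: there the target value is the constant $1$, which is an \emph{upper} bound for $D_q(\Pi_v\eta^\pom)$ for every $\om$ and $v$ with no genericity needed, and the matching lower bound $\underline{D}_q(\Pi_v\eta^\pom) \ge 1$ for generic $\om$ follows from the same cocycle lower estimate since $\liminf$ of a subadditive cocycle along a generic orbit is controlled by its a.e.\ value; so that case reduces to a one-sided version of the argument above.
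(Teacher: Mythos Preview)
Your overall architecture---build a subadditive cocycle $\phi_n(\om,v)$ over $(Y\times S^1,\mathbf{S})$, apply Kingman, and identify the limit via Marstrand---matches the paper. But your mechanism for the upgrade to all $\mu$-generic $\om$ and for the uniformity in $v$ has a genuine gap.

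You propose to show that the continuous functions $\tfrac1n\phi_n$ converge \emph{uniformly} on $\supp\mu\times S^1$, so that the limit is continuous and therefore constant. Two-sided uniform convergence of subadditive cocycle averages is well known to fail in general, even over uniquely ergodic systems; only the one-sided inequality $\limsup_n \tfrac1n\phi_n(x) \le \Phi:=\inf_n\tfrac1n\int\phi_n$ can be guaranteed at every generic point. This is exactly Furman's observation, and the paper proves the version it needs (Theorem~\ref{furman} and Corollary~\ref{cor:uniform-in-fiber}: for upper $C$-approximable cocycles over a compact group extension, the one-sided bound holds at every $\mu$-generic $\om$, \emph{uniformly in the group coordinate}). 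Your equicontinuity-in-$v$ sketch cannot repair this: it might yield equicontinuity of $\tfrac1n\phi_n(\om,\cdot)$ for fixed $\om$, but says nothing about the existence of a two-sided limit, and the assertion that a continuous subadditive cocycle has a continuous Kingman limit is simply false. The paper instead closes the argument asymmetrically: the one-sided cocycle bound gives $\underline{D}_q(\Pi_v\eta^\pom)\ge\Phi/\mu^*$ for every $\mu$-generic $\om$ and every $v$; the matching upper bound is the \emph{trivial} inequality $\overline{D}_q(\Pi_v\eta^\pom)\le\min(\overline{D}_q(\eta^\pom),1)$; a separate (simpler) cocycle over $(Y,\bt)$ shows $D_q(\eta^\pom)=D(q)$ for $\mu$-a.e.\ $\om$; and Marstrand (this is where $q\le 2$ enters) is invoked once to identify $\Phi/\mu^*=\min(D(q),1)$. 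The two alternative hypotheses in the last sentence of the theorem are precisely what make the trivial upper bound meet the cocycle lower bound at a generic, rather than merely typical, $\om$.

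A second, smaller but real issue: $\,(\om,v)\mapsto\log\mathcal{C}^q_{\Pi_v\eta^\pom}(n)$ is \emph{not} continuous, because weak continuity of $\om\mapsto\oeta^\pom$ does not give continuity of the mass of a half-open dyadic interval. The paper deals with this by replacing $\tau_n=\mathcal{C}^q$ with a smooth analogue $\overline{\tau}_n$ built from a compactly supported bump function and proving $\overline{\tau}_n\asymp\tau_n$ (Section~4.3); only after this replacement can one invoke the Furman-type result.
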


Examples and applications of this result will be discussed in Section \ref{sec:examples}. The assumption that each rule is homogeneous is critical for our method, and it would be interesting to know whether it can be dropped (we recall that for Hausdorff dimension there are similar results which do not require homogeneity, see \cite{HochmanShmerkin12, FalconerJin14}).

\subsection{Convolutions of Cantor measures}

Recall that the \emph{convolution} $\mu*\nu$ of two measures $\mu,\nu$ on $\R^d$ is the push-down of the product $\mu\times\nu$ under the addition map $(x,y)\mapsto x+y$. In this section we will be concerned with convolutions of two measures on $\R$, one of which is a deterministic measure supported on a self-similar set, while the other is a random measure satisfying properties analogous to those of the previous section.

Fix $N$ rules of the form $\{ f_1^{(i)},\ldots, f_{k_i}^{(i)} \}$, where $f_j^{(i)}(x)=a_i x+t_j^i$ for some ratios $a_i\in (0,1)$ and translations $t_j^i\in\R$, and let $\{\overline{\nu}^\pom: \om\in Y\}$ be a family of measures satisfying assumptions (a)-(c) above (with $\overline{\nu}^\pom$ in place of $\oeta^\pom$). We will assume without loss of generality that $f_j^{(i)}([0,1])\subset [0,1]$; we can always achieve this via an affine change of coordinates, which will not affect the statement of the theorem. As before, we denote $\nu^\pom=\Delta_\om\overline{\nu}^\pom$, where $\Delta_\om$ is the coding map. The measures $\nu^\pom$ are then supported on the Cantor sets $\mathcal{C}^\pom\subset [0,1]$ constructed from the sequence of rules $\om$.

We consider yet another rule $\{ g_1,\ldots, g_{k'}\}$, where $g_j(x)=b x+ t'_j$ for some contraction $b\in (0,1)$ and translations $t'_j\in \R$. Again, we assume that $g_j([0,1])\subset [0,1]$ for all $j\in\{1,\ldots,k'\}$. This is a special case of the preceding framework with $N=1$ rule, but we repeat some definitions for the sake of fixing notation. We denote the code space by $\X'_n=\{1,\ldots,k'\}^n$ (again allowing $n=\infty$), and the coding map by $\Delta':\X'_\infty\to [0,1]$, that is,
\[
\{\Delta'(u)\} = \bigcap_{n=1}^\infty g_{u_1}\circ\cdots\circ g_{u_n}([0,1]).
\]
The cylinder of infinite words in $\X'_\infty$ starting with $u$ will be denoted simply by $[u]$.

Further, let $\overline{\vartheta}$ be a Borel probability measure satisfying the analog of (c) above in the random case, that is, we assume that
\begin{equation} \label{eq:sub-theta}
\otheta([uv]) \le K' \, \otheta[u]\, \otheta[v],
\end{equation}
for some constant $K'>0$, and set $\vartheta=\Delta'\otheta$.

We fix $r \in \N$ such that
\begin{equation}\label{control1}
1 < \min_{i=1,\dots,N} \frac{b}{a_i^r}.
\end{equation} and we also consider $l \in \N$ satisfying
\begin{equation}\label{control}
\max_{i=1,\dots,N}{\frac{b}{a_i^r}} < b^{-l}.
\end{equation} Write $\beta:=\ln(b^{-l})$ and $\alpha_{i_1 \dots i_r} = \ln\left(\frac{b}{a_{i_1}\dots a_{i_r}}\right)$ for each choice of indices $1 \leq i_1,\dots,i_r \leq N$. (Note these are natural logarithms.)

Notice that $0 < \alpha_{i_1 \dots i_r} < \beta$ for all $1 \leq i_1,\dots,i_r \leq N$ and also that $\beta$ can be made arbitrarily large by choosing $l$ appropriately. Consider the space $S^1_\beta$ obtained by taking the interval $[-\beta,\beta)$ and identifying its ends, i.e. $-\beta=\beta$, and endow it with the normalized Lebesgue measure $\mathcal{L}_\beta$. Furthermore, define the continuous map $\alpha : Y \rightarrow \R$ by the formula $\alpha(\omega):=\alpha_{\omega_1 \dots \omega_r}$ and the skew-product map $\mathbf{S}$ on $Y\times S^1_\beta$ as
\[
\mathbf{S}(\omega,s) = (\mathbf{T}^r(\omega), s +_{(\beta)} \alpha(\omega)),
\]
where, as before, $\mathbf{T}$ denotes the left shift operator on $Y$, and $+_{(\beta)}$ stands for the natural sum in $S^1_\beta$.
Also, for each $n \in \N$ let us define the $n$-th rotation $\mathbf{R}^n : Y \times S^1_\beta \rightarrow S^1_\beta$ by the formula
\[
\mathbf{R}^n (\omega,s) := \pi_{S^1_\beta} \left( \mathbf{S}^n (\omega,s) \right) = s +_{(\beta)} \alpha_{\omega_1\dots \omega_r} +_{(\beta)} \cdots +_{(\beta)} \alpha_{\omega_{r(n-1)+1} \dots \omega_{rn}},
\]
where $\pi_{S^1_\beta}$ denotes the projection from $Y \times S^1_\beta$ onto $S^1_\beta$.

We can now state our main result on convolutions.

\begin{theorem}\label{thm:mainconvolutions}
Let $\mu$ be an ergodic, invariant measure for $(\{1,\dots,N\}^\N ,\bt)$ such that the product measure $\mu \times \mathcal{L}_{\beta}$ is ergodic for the dynamical system $(\{1,\dots,N\}^\N \times S^1_{\beta}, \mathbf{S})$.

Then for each $q\in (1,2]$ there is a number $D(q)$, such that for $\mu$-almost all $\om$ it holds that $D_q(\nu^\pom\times\vartheta)=D(q)$, and
\begin{equation} \label{eq:main-convolutions}
D_q(\nu^\pom * A_t\vartheta) = \min(D(q),1) \quad\text{for all }t\in [e^{-\beta},e^{\beta}),
\end{equation}
where $A_t(x)=tx$ scales by $t$. Furthermore, the convergence of $-\tfrac{\log \mathcal{C}_{\nu^\pom * A_s\vartheta}^q(n)}{n(q-1)}$ to $D_q(\nu^\pom * A_s\vartheta)$ is uniform in $t\in [e^{-\beta},e^{\beta})$.

If $D_q(\nu^\pom)=D(q)$ for all $\mu$-generic points $\om$, or if $D(q)\ge 1$, then the above conclusions hold for all $\mu$-generic $\om$.
\end{theorem}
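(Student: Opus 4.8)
The plan is to reduce the convolution statement to the projection framework of Theorem \ref{thm:main-projections}, exploiting the standard logarithmic change of coordinates that turns a convolution of self-similar-type measures into a linear projection of a planar ``product-like'' measure. First I would set up the two-dimensional model: the rules for the first coordinate come from $\{f_1^{(i)},\ldots,f_{k_i}^{(i)}\}$, but grouped $r$ at a time (so that a single step multiplies the contraction ratio by $a_{i_1}\cdots a_{i_r}$), and the rule for the second coordinate comes from $\{g_1,\ldots,g_{k'}\}$ grouped $l$ at a time; the choices \eqref{control1} and \eqref{control} are exactly what make the ``log-ratios'' $\alpha_{i_1\dots i_r}$ and $\beta$ comparable, so that after passing to logarithms the first-coordinate contraction is (uniformly) a bounded power of the second-coordinate contraction. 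Under the map $(x,y)\mapsto \log$-scale, the pair of Cantor constructions becomes a single ``homogeneous'' construction in the plane whose rotation component is precisely the skew-product $\mathbf S$ on $Y\times S^1_\beta$; the measure $\overline\nu^{(\om)}\times\overline\vartheta$ (which satisfies (a)--(c) by \eqref{eq:measure-sub-product} and \eqref{eq:sub-theta}, with constant $KK'$) plays the role of $\oeta^{(\om)}$. The key algebraic identity is that the convolution $\nu^{(\om)}*A_t\vartheta$ is, up to a bi-Lipschitz change of coordinates with uniformly bounded constants, the image of $\nu^{(\om)}\times\vartheta$ under the orthogonal projection $\Pi_v$ where $v=v(t)$ ranges over an arc of $S^1$ as $t$ ranges over $[e^{-\beta},e^\beta)$; since $L^q$ dimension is bi-Lipschitz invariant, the conclusion \eqref{eq:main-convolutions} and the uniformity statement both follow directly from \eqref{eq:main-result} and its uniformity clause applied to this auxiliary planar measure.

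Concretely, the steps I would carry out, in order, are: (1) record the reduction above and verify that $\overline\nu^{(\om)}\times\overline\vartheta$ satisfies (a)--(c) with the skew-product $\mathbf S$ of the theorem statement — here one checks that grouping $r$ (resp.\ $l$) symbols at a time preserves the ergodicity hypothesis, since $\mathbf T^r$ is ergodic with respect to $\mu$ (as $\mu$ is $\mathbf T$-ergodic and\ldots\ actually one only needs $\mu\times\mathcal L_\beta$ ergodic for $\mathbf S$, which is assumed); (2) prove the bi-Lipschitz correspondence between $\nu^{(\om)}*A_t\vartheta$ and $\Pi_{v(t)}(\nu^{(\om)}\times\vartheta)$, with constants uniform in $t$ — this is the ``$\log$ trick'' and the only place where \eqref{control1}, \eqref{control} are used, to ensure the exponent staying in a compact range; (3) invoke Theorem \ref{thm:main-projections} for the planar measure to get a number $D(q)$ with $D_q(\nu^{(\om)}\times\vartheta)=D(q)$ for $\mu$-a.e.\ $\om$ and $D_q(\Pi_v(\nu^{(\om)}\times\vartheta))=\min(D(q),1)$ uniformly in $v$; (4) translate back via (2) to obtain \eqref{eq:main-convolutions} and its uniformity; (5) handle the ``$\mu$-generic'' addendum verbatim as in Theorem \ref{thm:main-projections}, noting the reduction is deterministic and so preserves genericity.

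The main obstacle I anticipate is step (2): making the logarithmic change of coordinates rigorous while keeping \emph{uniform} control. A convolution is not literally a linear image of a product measure after taking logs — the addition $x+y$ does not become linear — so one cannot simply apply $\log$ coordinate-wise. The correct move is more delicate: one writes each point of $\supp(\nu^{(\om)})$ as $a_{i_1}\cdots a_{i_n}\cdot(\text{bounded})$ and each point of $\supp(\vartheta)$ similarly with the $g$'s, matches the two scales using \eqref{control1}--\eqref{control} (this is why $r$ and $l$ are introduced and why $\beta$ must be large enough to fit all the $\alpha_{i_1\dots i_r}$), and shows that at dyadic scale $2^{-n}$ the measure of a cube under $\nu^{(\om)}*A_t\vartheta$ is comparable, with constants independent of $t$ and $n$, to the measure of a correspondingly-sized tube under $\nu^{(\om)}\times\vartheta$ in the direction $v(t)$. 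Equivalently, one shows $\mathcal C^q_{\nu^{(\om)}*A_t\vartheta}(n)$ and $\mathcal C^q_{\Pi_{v(t)}(\nu^{(\om)}\times\vartheta)}(n)$ agree up to a multiplicative constant independent of $n$ and $t$; then the uniform convergence in Theorem \ref{thm:main-projections} transfers directly. Everything else is bookkeeping, but getting the tube/cube comparison with genuinely uniform constants — rather than constants depending on $t$ or on $\om$ — is where the care is needed.
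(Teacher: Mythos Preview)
Your reduction to Theorem \ref{thm:main-projections} does not go through, and the gap is not in step (2) where you anticipate it. The product measure $\nu^{(\omega)}\times\vartheta$ is generated by the product IFS $(x,y)\mapsto (a_i x+t,\,b y+t')$, which consists of \emph{diagonal affine maps with distinct eigenvalues}, not similarities. Theorem \ref{thm:main-projections} requires rules of the form $\lambda_i R_{\alpha_i}x+t^i_j$ with a genuine rotation; there is no rotation here at all. The circle $S^1_\beta$ in Theorem \ref{thm:mainconvolutions} parametrizes logarithms of eccentricities, not angles, so the two skew products are structurally different objects and one cannot be pulled back to the other. Your ``log trick'' does not repair this: as you yourself note, $x+y$ does not become linear under log, and in any case the supports meet $0$ so a logarithmic change of coordinates is not bi-Lipschitz. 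Meanwhile, the identity in your step (2)---that $\nu^{(\omega)}*A_t\vartheta$ is the projection of $\nu^{(\omega)}\times\vartheta$ in direction $(1,t)$---is just the definition of convolution, so the comparison you spend your ``main obstacle'' paragraph on is trivially exact; the difficulty is that Theorem \ref{thm:main-projections} says nothing about this product.

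What the paper actually does is \emph{rerun} the cocycle argument from scratch in the affine setting, with two genuine new complications. First, because the maps are not conformal, one must track the eccentricity of images of the unit square; this is done via three families $\mathbb{W}^{(\omega)}_n,\mathbb{Y}^{(\omega)}_n,\mathbb{Z}^{(\omega)}_n$ of approximate squares, chosen according to whether the accumulated log-eccentricity $\mathbf{R}^n(\omega,0)+s$ lies in $[-\beta,\beta)$ or has wrapped past one of the endpoints. The cocycle inequality \eqref{eq:mcocycle} is then proved case by case. Second, the identification $-\beta\sim\beta$ that makes $S^1_\beta$ a compact group is a dynamical convenience with no geometric meaning, so the smoothed cocycle $\overline\tau_n$ is \emph{not} continuous on $Y\times S^1_\beta$---it has a genuine discontinuity at $s=\pm\beta$. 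One must show instead that it is upper $C$-approximable (Definition \ref{def:C-approx}), which suffices for Theorem \ref{furman} and Corollary \ref{cor:uniform-in-fiber}. Neither of these issues arises in Theorem \ref{thm:main-projections}, and neither is addressed by your plan.
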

 Note that $(x,y)\mapsto x+A_t y$ is, up to affine homeomorphism, the orthogonal projection with angle $\arctan(t)$; hence this result can also be interpreted in terms of projctions of the prouct measure $\nu^\pom\times\vartheta$.  Again, the homogeneity assumption on the rules is crucial. Also, we do not know if the statement holds if $\vartheta$ is also chosen randomly according to a sequence of rules. Although this appears natural, it does not seem possible to build a cocycle like the one at the core of the proof in this setting.

\section{Some auxiliary results from ergodic theory}\label{ergodic}

In this section we collect some ergodic-theoretic facts. The results are rather standard, but we include proofs for completeness, as we have not been able to find exact references. Since the proofs are the same, we state them in greater generality than needed in our later applications.

\subsection{Compact group extensions and generic points}

We begin with some general definitions. Let $(Y,T,\mu)$ be a measure-preserving dynamical system for a compact metric space $Y$ together with its Borel $\sigma$-algebra, $G$ a compact group endowed with the Haar measure $m_G$ and $\alpha: Y \rightarrow G$ a continuous map. Define the \textit{skew-product} map $S$ on $X= Y \times G$ by the formula
$$
S(y,g) = (T(y),\alpha(y)\cdot g).
$$ A measure $\vartheta$ on $X$ is said to \textit{project over }$\mu$ if $\pi_Y \vartheta = \mu$, where $\pi_Y$ stands for the projection on $Y$. Furthermore, $\vartheta$ is said to be \textit{uniquely ergodic over} $\mu$ if it is the unique ergodic measure which projects over $\mu$. Notice that the measure $\mu \times m_G$ clearly projects over $\mu$ and is also $S$-invariant by the Fubini-Tonelli theorem, since $\mu$ is $T$-invariant and $m_G$ is the Haar measure on $G$.

\begin{proposition}\label{equivalencia} A measure $\vartheta$ on $X$ is uniquely ergodic over an ergodic measure $\mu$ if and only if it is the unique $S$-invariant measure which projects over $\mu$.
\end{proposition}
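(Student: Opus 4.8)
The plan is to prove the two implications separately, the nontrivial content being the forward direction, which rests on the ergodic decomposition together with the fact that an ergodic measure is an extreme point of the simplex of invariant measures.

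The easy implication: if $\vartheta$ is the unique $S$-invariant measure projecting over $\mu$, then since every ergodic measure is in particular $S$-invariant, $\vartheta$ is \emph{a fortiori} the unique ergodic measure which projects over $\mu$; that $\vartheta$ is itself ergodic will follow from the decomposition argument below applied with $\vartheta'=\vartheta$. For the converse, suppose $\vartheta$ is the unique ergodic $S$-invariant measure projecting over $\mu$, and let $\vartheta'$ be an arbitrary $S$-invariant probability measure on $X$ with $\pi_Y\vartheta'=\mu$; the goal is to show $\vartheta'=\vartheta$. I would write the ergodic decomposition $\vartheta' = \int \vartheta'_z\,d\tau(z)$, where for $\tau$-almost every $z$ the measure $\vartheta'_z$ is an ergodic $S$-invariant probability measure on $X$. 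Since $\pi_Y$ intertwines $S$ and $T$, each $\pi_Y\vartheta'_z$ is a $T$-invariant probability measure on $Y$, and pushing the decomposition forward (testing against bounded measurable functions on $Y$) gives $\mu=\pi_Y\vartheta' = \int \pi_Y\vartheta'_z\,d\tau(z)$.

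Now I would invoke that $\mu$ is ergodic, hence an extreme point of the weak-$*$ compact, metrizable, convex set of $T$-invariant probability measures on the compact metric space $Y$. A standard fact of Choquet theory then says that any barycentric representation of an extreme point of such a set is trivial, so $\pi_Y\vartheta'_z=\mu$ for $\tau$-almost every $z$. Consequently, for $\tau$-a.e.\ $z$ the measure $\vartheta'_z$ is an ergodic $S$-invariant measure projecting over $\mu$, whence $\vartheta'_z=\vartheta$ by hypothesis; integrating over $z$ yields $\vartheta'=\vartheta$, as desired. (It is also worth noting that the set of $S$-invariant measures projecting over $\mu$ is nonempty, since it contains $\mu\times m_G$, so the statement is not vacuous, and the ergodic decomposition is available.)

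The routine points to be filled in are that $\pi_Y$ is a factor map, so pushforwards of $S$-invariant (resp.\ ergodic) measures are $T$-invariant and the ergodic decomposition pushes forward as claimed, and the measurable-selection machinery underlying the ergodic decomposition itself. The one step I would treat with a little care — and which I regard as the only genuine (if mild) obstacle — is the extreme-point/barycenter argument: spelling out that if $x$ is an extreme point of a metrizable compact convex set and $\tau$ is a probability measure on that set with barycenter $x$, then $\tau=\delta_x$, and applying it correctly to the decomposition of $\mu$. Everything else is bookkeeping.
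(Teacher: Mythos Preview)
Your proposal is correct and follows essentially the same route as the paper's proof: ergodic decomposition of an arbitrary $S$-invariant measure projecting over $\mu$, pushing forward to obtain a representation of $\mu$ as a barycenter of $T$-invariant measures, and then using that $\mu$, being ergodic, is extreme (the paper cites Bauer's theorem where you invoke Choquet theory) to conclude the decomposition is trivial. The paper also handles the ergodicity of $\vartheta$ in the $\Leftarrow$ direction by exactly the same decomposition argument you sketch.
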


\begin{proof} For the $\Leftarrow$ implication, we note that $\vartheta$ must be ergodic: if $\vartheta = \int \sigma\,d\rho(\sigma)$ is the ergodic decomposition of $\vartheta$ (see e.g. \cite[Theorem 6.1]{EinWar11}) then $\mu=\int \pi_Y\sigma \,d\rho(\sigma)$. Since $\mu$ is ergodic, $\pi_Y\sigma=\mu$ for $\rho$-almost all $\sigma$, whence $\sigma=\vartheta$ for $\rho$-almost all $\sigma$, showing that $\vartheta$ is ergodic as claimed. The $\Leftarrow$ implication is now clear.

Thus, suppose that $\vartheta$ is uniquely ergodic and let $\zeta$ be an $S$-invariant measure that projects over $\mu$. We must show that $\zeta=\vartheta$. Let
\[
\zeta = \int \sigma \;d\rho(\sigma)
\]
be the ergodic decomposition of $\zeta$. Hence, to see that $\zeta=\vartheta$ it suffices to show that $\rho = \delta_{\vartheta}$.
Notice that
\begin{equation}
 \mu = \pi_Y \zeta = \int \pi_Y \sigma \; d\rho(\sigma) = \int_{\mathcal{D}} \sigma' \; d \pi_Y \rho(\sigma')
\end{equation}
where $\mathcal{D}=\{ \sigma' \in \mathcal{P}(Y) : \sigma' \mbox{ is } T\mbox{-invariant} \}$.

Since $\mu$ is ergodic, it is an extreme point of the set $\mathcal{D}$ whence, by Bauer's characterization of extreme points \cite[Chapter IX, Theorem 3]{Die84}, we have that $\pi_Y \rho = \delta_{\mu}$. But then, since $\rho$ is supported on the set of ergodic measures and $\vartheta$ is uniquely ergodic, we obtain that
\begin{align*}
1 &= \pi_Y \rho ( \{ \mu \}) = \rho \big( \{ \sigma: \pi_Y \sigma = \mu \} \big) \\
&=  \rho \big( \{ \sigma   : \sigma \mbox{ is ergodic and } \pi_Y \sigma = \mu \} \big) = \rho (\{ \vartheta\} )
\end{align*}
which implies that $\rho = \delta_{\vartheta}$ and concludes the proof.
\end{proof}

Recall that given a measure-preserving system $(Z,R,\sigma)$, with $Z$ compact and $R$ continuous, we say that $z \in Z$ is \emph{generic} for $\sigma$ (or $\sigma$-generic) if for any continuous function $f: Z \rightarrow \R$ we have
\[
\lim_{n \to \infty}  \frac{1}{n} \sum_{i=0}^{n-1} f(R^i(z)) = \int_Y f \;d\sigma.
\]
It follows from the ergodic theorem that if $\sigma$ is ergodic then $\sigma$-almost every $z \in Z$ is generic for $\sigma$.

The special case of the next lemma in which  the base system $(Y,T)$ is uniquely ergodic is a classical result of Furstenberg, see e.g.  \cite[Theorem 4.21]{EinWar11}. The general case goes along the same lines and is surely known, but we give the proof for completeness.
\begin{lemma}\label{unicamenteergodica} If $\vartheta = \mu \times m_G$ is ergodic then it is uniquely ergodic over $\mu$.
\end{lemma}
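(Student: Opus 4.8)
The plan is to reduce, via Proposition \ref{equivalencia}, to the purely measure-theoretic statement that $\vartheta=\mu\times m_G$ is the \emph{unique} $S$-invariant probability measure on $X=Y\times G$ that projects over $\mu$. (Note that this reduction is legitimate: since $\vartheta$ is $S$-ergodic and $\pi_Y$ is a factor map, $\mu=\pi_Y\vartheta$ is automatically $T$-ergodic.) So let $\zeta$ be an arbitrary $S$-invariant measure with $\pi_Y\zeta=\mu$; I want to deduce that $\zeta=\vartheta$.

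The key device is fibrewise right translation: for $h\in G$ put $\rho_h:X\to X$, $\rho_h(y,g)=(y,gh)$. Since left and right translations on a group commute, $\rho_h$ commutes with $S$, and therefore $(\rho_h)_*\zeta$ is again $S$-invariant and still projects over $\mu$, for every $h\in G$. First I would form the Haar average $\overline\zeta:=\int_G(\rho_h)_*\zeta\,dm_G(h)$. This is again $S$-invariant and projects over $\mu$, and moreover — because $m_G$ is right-invariant ($G$ being compact, hence unimodular) — it is invariant under every $\rho_{h'}$, since $(\rho_{h'})_*\overline\zeta=\int_G(\rho_{hh'})_*\zeta\,dm_G(h)=\overline\zeta$. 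Next I would disintegrate $\overline\zeta=\int_Y\delta_y\times\overline\zeta_y\,d\mu(y)$ over $\mu$; the $\rho_{h'}$-invariance of $\overline\zeta$ together with $\mu$-a.e.\ uniqueness of the disintegration forces $(R_{h'})_*\overline\zeta_y=\overline\zeta_y$ for $\mu$-a.e.\ $y$, for each fixed $h'$. Running over a countable dense subset of $G$ and using weak continuity of $h'\mapsto(R_{h'})_*\overline\zeta_y$, I get that $\overline\zeta_y$ is right-invariant on $G$ for $\mu$-a.e.\ $y$, hence equal to $m_G$ by uniqueness of Haar measure; thus $\overline\zeta=\mu\times m_G=\vartheta$.

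To finish I would invoke ergodicity of $\vartheta$: being ergodic, it is an extreme point of the (weak-$\ast$ compact, convex) set of $S$-invariant probability measures on $X$. The identity $\vartheta=\overline\zeta=\int_G(\rho_h)_*\zeta\,dm_G(h)$ exhibits $\vartheta$ as the barycenter of the image of $m_G$ under the (continuous, hence Borel) map $h\mapsto(\rho_h)_*\zeta$; by Bauer's characterization of extreme points \cite[Chapter IX, Theorem 3]{Die84} — exactly as used in the proof of Proposition \ref{equivalencia} — that image measure must be $\delta_\vartheta$, i.e.\ $(\rho_h)_*\zeta=\vartheta$ for $m_G$-a.e.\ $h$. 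Choosing any such $h_0$ and applying $\rho_{h_0}^{-1}=\rho_{h_0^{-1}}$ yields $\zeta=(\rho_{h_0^{-1}})_*\vartheta=\vartheta$, using once more that $m_G$ is right-invariant.

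I expect the only genuinely delicate point to be this last paragraph: making the ``a point is the barycenter only of its own Dirac mass'' step precise (Borel measurability of $h\mapsto(\rho_h)_*\zeta$ and the exact form of Bauer's theorem). Everything else — that $\rho_h$ commutes with $S$, unimodularity of $G$, and the disintegration computation identifying $\overline\zeta$ with $\mu\times m_G$ — is routine. As an alternative to Bauer that sidesteps Choquet theory, one can first reduce to ergodic $\zeta$, observe that the distinct measures among $\{(\rho_h)_*\zeta\}_{h\in G}$ are then mutually singular ergodic measures, and use that an ergodic measure cannot be written as a non-trivial average of mutually singular invariant measures; but keeping Bauer makes the argument parallel to the one already given.
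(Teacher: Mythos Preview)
Your proof is correct, but it follows a genuinely different route from the paper's. Both arguments hinge on the same structural observation---that the fibrewise right translations $\rho_h(y,g)=(y,gh)$ commute with $S$ and preserve $\mu\times m_G$---but they exploit it differently. The paper works with \emph{generic points}: it shows that genericity for $\vartheta$ propagates along each fibre $\{y\}\times G$ via $\rho_h$, and then, given any ergodic $\rho$ projecting to $\mu$, finds a point $(\omega,g)$ that is simultaneously generic for $\vartheta$ and for $\rho$, forcing $\vartheta=\rho$. This avoids Proposition~\ref{equivalencia}, disintegration, and Choquet/Bauer theory entirely; it is shorter and more elementary, relying only on the pointwise ergodic theorem and the continuity of $T$ and $\alpha$. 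Your approach, by contrast, is purely measure-theoretic: you Haar-average an arbitrary invariant $\zeta$ to land on $\vartheta$, and then use extremality to ``undo'' the averaging. This buys you independence from the topological hypotheses on $T$ and $\alpha$ (you never invoke generic points), at the cost of importing disintegration and Bauer's theorem. Both are perfectly valid; the paper's argument is the more economical one in this setting.
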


\begin{proof} Clearly we have $\pi_Y \vartheta = \mu$, so that it remains to check that $\vartheta$ is the unique ergodic measure with this property.
Now, since $\vartheta$ is ergodic we have that $\vartheta$-almost every $(\tilde{\omega},\tilde{g}) \in X$ is generic for $\vartheta$. Furthermore, we have that
\begin{equation}\label{implicacion}
(\omega,g) \text{ is generic for $\vartheta$} \Longrightarrow (\omega,g') \text{ is generic for $\vartheta$ for every $g' \in G$}.
\end{equation} Indeed, observe that for any $i \in \mathbb{N}_0$ we have that
$$
S^i(\omega,g') = M_{g^{-1}\cdot g'}(S^i(\omega,g))
$$ where for any $h \in G$ the map $M_h : X \rightarrow X$ is defined by the formula
$$
M_{h}(\tilde{\omega},\tilde{g}) := (\tilde{\omega},\tilde{g}\cdot h).
$$ Then for any continuous function $f$ we have that
$$
\frac{1}{n} \sum_{i=0}^{n-1} f(S^i (\omega,g')) = \frac{1}{n}\sum_{i=0}^{n-1} f(M_{g^{-1}\cdot g'}(S^i(\omega,g))) \longrightarrow \int_X f \circ M_{g^{-1} \cdot g'} \;d(\mu \times m_G)
$$ since $(\omega,g)$ is generic for $\vartheta$ and $f \circ M_h$ is continuous for any $h \in G$. Now, since $m_G$ is invariant under multiplication, by the Fubini theorem we conclude that
$$
\frac{1}{n} \sum_{i=0}^{n-1} f(S^i (\omega,g')) \longrightarrow \int_X f \; d(\mu \times m_G)
$$ which shows that $(\omega,g')$ is generic for $\vartheta$.

Now, let $\rho$ be an ergodic measure on $X$ which projects over $\mu$. For any such $\rho$ the set
\begin{equation}\label{lambda}
\Lambda_\rho = \{ \omega \in Y : (\omega,g) \text{ is generic for $\rho$ for some $g \in G$}\}
\end{equation} has full $\mu$-measure on $Y$. In particular, the set $\Lambda_\vartheta \cap \Lambda_\rho$ is nonempty, where $\Lambda_\vartheta$ is defined by analogy with \eqref{lambda}.
Notice that by \eqref{implicacion} we have that for any $\omega \in \Lambda_\vartheta \cap \Lambda_\rho$ there exists $g \in G$ such that $(\omega,g)$ is generic for both $\vartheta$ and $\rho$. But by definition of generic point this implies that
$$
\int_X f \; d\vartheta  = \int_X f \; d\rho
$$ for any continuous function, which shows that $\vartheta=\rho$.
\end{proof}

We finish this section with the following uniform convergence result, which again is classical in the uniquely ergodic case.

\begin{lemma}  \label{lem:uniform-convergence}
 If $\mu\times m_G$ is ergodic, then for every continuous function $f:X\to\R$ and every $\mu$-generic point $\omega$, the ergodic averages
 \[
  \frac1n \sum_{i=1}^n f(S^i(\omega,g))
 \]
 converge to $\int f\,d(\mu\times m_G)$, uniformly in $g\in G$.
\end{lemma}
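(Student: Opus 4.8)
The plan is to deduce the uniform convergence statement from the (already established) unique ergodicity of $\vartheta=\mu\times m_G$ over $\mu$, via a standard compactness-and-contradiction argument in the weak topology on $\mathcal{P}(X)$. Fix a continuous $f:X\to\R$ and a $\mu$-generic point $\omega\in Y$. For $n\in\N$ and $g\in G$ write $A_n(\omega,g)=\frac1n\sum_{i=1}^n f(S^i(\omega,g))$, and also consider the empirical measures $\sigma_n^{(\omega,g)}=\frac1n\sum_{i=1}^n \delta_{S^i(\omega,g)}$, so that $A_n(\omega,g)=\int f\,d\sigma_n^{(\omega,g)}$. The target value is $L:=\int f\,d(\mu\times m_G)=\int f\,d\vartheta$.

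First I would argue by contradiction: suppose the convergence is not uniform in $g$. Then there exist $\varepsilon>0$, a sequence $n_j\to\infty$, and points $g_j\in G$ with $|A_{n_j}(\omega,g_j)-L|\ge\varepsilon$. Since $\mathcal{P}(X)$ is weak-$*$ compact ($X$ is compact metric), after passing to a subsequence we may assume $\sigma_{n_j}^{(\omega,g_j)}\to\zeta$ for some $\zeta\in\mathcal{P}(X)$, and then $\int f\,d\zeta=\lim_j A_{n_j}(\omega,g_j)$, so $|\int f\,d\zeta-L|\ge\varepsilon$; in particular $\zeta\ne\vartheta$. The key step is then to show that $\zeta$ is $S$-invariant and projects over $\mu$, which by Proposition~\ref{equivalencia} (since $\vartheta$ is uniquely ergodic over $\mu$, being ergodic by hypothesis and projecting over $\mu$) forces $\zeta=\vartheta$, a contradiction. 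The $S$-invariance is the classical telescoping estimate: $|\int h\circ S\,d\sigma_n^{(\omega,g)}-\int h\,d\sigma_n^{(\omega,g)}|=\frac1n|h(S^{n+1}(\omega,g))-h(S(\omega,g))|\le \frac{2\|h\|_\infty}{n}\to0$ for every continuous $h$, hence $\int h\circ S\,d\zeta=\int h\,d\zeta$. For the projection claim, note $\pi_Y\sigma_{n_j}^{(\omega,g_j)}=\frac1{n_j}\sum_{i=1}^{n_j}\delta_{T^i\omega}$ does not depend on $g_j$, and this converges to $\mu$ precisely because $\omega$ is $\mu$-generic; hence $\pi_Y\zeta=\mu$.

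The main obstacle — really the only non-bookkeeping point — is making sure the hypotheses of Proposition~\ref{equivalencia} are genuinely in force: that $\vartheta=\mu\times m_G$ is ergodic (this is the standing assumption of the lemma) and that $\mu$ is ergodic. The latter is automatic: if $\mu\times m_G$ is ergodic then $\mu=\pi_Y(\mu\times m_G)$ is ergodic, since a $T$-invariant set $E\subset Y$ of intermediate measure would pull back to an $S$-invariant set $E\times G$ of intermediate measure. With $\mu$ ergodic and $\vartheta$ ergodic and projecting over $\mu$, Lemma~\ref{unicamenteergodica} (or directly Proposition~\ref{equivalencia}) gives that $\vartheta$ is the unique $S$-invariant measure projecting over $\mu$, which is exactly what the contradiction argument needs. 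One should also record that the conclusion $A_{n_j}(\omega,g_j)\to\int f\,d\zeta$ uses only that $f$ is bounded and continuous and $\sigma_{n_j}^{(\omega,g_j)}\to\zeta$ weakly, which is immediate.

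A remark worth including: the argument shows more, namely that the empirical measures $\sigma_n^{(\omega,g)}$ converge to $\vartheta$ in the weak topology uniformly in $g$ (equivalently, uniformly over a countable dense set of test functions, by a diagonal argument), from which the stated scalar uniform convergence for a single $f$ follows a fortiori; but since only the scalar statement is needed later, I would phrase the proof directly in terms of the fixed $f$ to keep it short.
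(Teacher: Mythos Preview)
Your proof is correct and follows essentially the same argument as the paper's own proof: a contradiction via compactness of $\mathcal{P}(X)$, the telescoping estimate for $S$-invariance of the limit measure, the genericity of $\omega$ for the projection to $\mu$, and the appeal to Proposition~\ref{equivalencia} and Lemma~\ref{unicamenteergodica}. Your explicit check that ergodicity of $\mu$ follows from that of $\mu\times m_G$ is a welcome clarification that the paper leaves implicit.
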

\begin{proof}
Suppose the statement does not hold for some $\mu$-generic $\om$  and continuous $f$. Then we can find $\eps>0$, a sequence $n_j\to\infty$ and points $g_j\in G$ such that
\begin{equation}  \label{eq:contradiction}
 \left|\frac{1}{n_j} \sum_{i=1}^{n_j} f(S^i(\omega,g_j)) -  \int f\,d(\mu\times m_G)\right| > \eps.
\end{equation}
After passing to a further subsequence, we may assume that $\nu_j:=\tfrac{1}{n_j}\sum_{i=1}^{n_j} \delta_{S^i(\omega,g_j)}$ converges  to a measure $\vartheta$. Note that, for any $h\in C(X)$,
\[
\left|\int h\, d\nu_j - \int h\, d(S\nu_j)\right| = \frac{1}{n_j}| h(\omega,g_j) - h(S^{n_j+1}(\omega,g_j))|\le \frac{2\|h\|_\infty}{n_j},
\]
which tends to $0$ as $j\to\infty$. Thus the limit $\vartheta$ is $S$-invariant. Moreover, $\pi_Y\vartheta$ is the limit of $\tfrac{1}{n_j}\sum_{i=1}^{n_j} \delta_{T^i(\omega)}$, which equals $\mu$ thanks to our assumption that $\omega$ is generic.

It now follows from Proposition \ref{equivalencia} and Lemma \ref{unicamenteergodica} that $\vartheta=\mu\times m_G$, and hence $\tfrac{1}{n_j}\sum_{i=1}^{n_j} f(S^i(\omega,g_j))$ converges to $\int f \, d\vartheta=\int f\,d(\mu\times m_G)$. This contradicts \eqref{eq:contradiction}, as desired.
\end{proof}

\subsection{Subadditive cocycles and generic points}

It is well known that if $(X,S)$ is uniquely ergodic, then ergodic averages of continuous functions converge uniformly. This fails for cocycles over uniquely ergodic systems, but one side of the inequality holds: this was observed by Furman \cite[Theorem 1]{Fur97}. An inspection of the proof of the subadditive  ergodic theorem given by Katznleson and Weiss \cite{KatWie82} yields a more general result. First, we introduce a definition.

\begin{definition} \label{def:C-approx}
A function $\phi:X\to\R$, where $(X,\mu)$ is a measured metric space, is said to be \emph{upper $C$-approximable} if, for every $\eps>0$, there exists a continuous function $\phi_\eps:X\to\R$ such that $\phi \le \phi_\eps$ pointwise, and $\int (\phi_\eps-\phi)\,d\mu<\eps$.
\end{definition}

\begin{theorem}\label{furman} Let $(X,S,\mu)$ be an ergodic measure-preserving system with $X$ compact and $S$ continuous. Let $\mathcal{F}=(\phi_n)_{n \in \mathbb{N}}$ be an upper $C$-approximable subadditive cocycle on $X$, that is,
\begin{equation}\label{cocycledefi}
\phi_{n+m}(x) \leq \phi_n(x) + \phi_m \circ S^n(x) \quad\text{for all }x\in X.
\end{equation}
Then for any $\mu$-generic $x\in X$, it holds that
\[
\limsup_{n \rightarrow +\infty} \frac{\phi_n(x)}{n} \leq \Phi(\mathcal{F}):= \inf_{n \in \mathbb{N}}\left\{ \frac{1}{n} \int_X \phi_n \;d\mu\right\}.
\]
\end{theorem}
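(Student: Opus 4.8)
The plan is to follow the Katznelson--Weiss proof of the subadditive ergodic theorem, replacing each appearance of a "continuous function" by an upper approximation, and tracking that the one-sided conclusion survives the passage to generic (rather than almost every) points. First I would record the basic reduction: fix $\eps>0$ and choose $m\in\N$ with $\frac1m\int_X\phi_m\,d\mu < \Phi(\mathcal{F})+\eps$. Set $\psi:=\phi_m$ and, using subadditivity, observe that for any $x$ and any $n$, writing $n=qm+s$ with $0\le s<m$, one has
\[
\phi_n(x)\le \sum_{j=0}^{q-1}\psi\circ S^{jm}(x) + \phi_s\circ S^{qm}(x).
\]
The first sum is a Birkhoff sum of $\psi$ along the orbit of $x$ under $S^m$; the second, error term, will be controlled because $\limsup_n \phi_s(S^{qm}x)/n = 0$ once we know $\phi_s$ is bounded above — and this is exactly what upper $C$-approximability gives, since $\phi_s\le (\phi_s)_1$ which is continuous, hence bounded on the compact space $X$. (Boundedness \emph{below} is not needed for an upper bound, which is why only the one-sided hypothesis appears.)

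Next I would handle the Birkhoff sum via Lemma \ref{lem:uniform-convergence}-style reasoning — but more simply, since here we only need convergence along the single orbit of a $\mu$-generic point, not uniformity in a group variable. The subtlety is that $\psi=\phi_m$ is merely upper $C$-approximable, not continuous, so $\frac1q\sum_{j=0}^{q-1}\psi\circ S^{jm}(x)$ need not converge for a generic $x$; instead I would bound it above by $\frac1q\sum_{j=0}^{q-1}\psi_\eps\circ S^{jm}(x)$ for a continuous $\psi_\eps\ge\psi$ with $\int(\psi_\eps-\psi)\,d\mu<\eps$. One then needs that $\mu$-genericity of $x$ for $(X,S)$ implies $\mu$-genericity of $x$ for $(X,S^m)$ — which is immediate, since $\frac1n\sum_{i=0}^{n-1}\delta_{S^i x}\to\mu$ forces $\frac1q\sum_{j=0}^{q-1}\delta_{S^{jm}x}\to\mu$ as well (split the Birkhoff average over $S$ into $m$ arithmetic-progression subsums; each converges to $\mu$ because $S^m$-orbits of $x,Sx,\dots,S^{m-1}x$ are all tails of the $S$-orbit). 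Hence $\frac1q\sum_{j=0}^{q-1}\psi_\eps\circ S^{jm}(x)\to\int\psi_\eps\,d\mu \le \int\psi\,d\mu+\eps = \int\phi_m\,d\mu+\eps$.

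Putting the pieces together: for $\mu$-generic $x$,
\[
\limsup_{n\to\infty}\frac{\phi_n(x)}{n}
\le \limsup_{n\to\infty}\frac{q}{n}\cdot\frac1q\sum_{j=0}^{q-1}\psi_\eps\circ S^{jm}(x) + 0
\le \frac{1}{m}\Big(\int\phi_m\,d\mu+\eps\Big) < \Phi(\mathcal{F}) + \eps + \frac{\eps}{m},
\]
using $q/n\to 1/m$. Letting $\eps\downarrow 0$ yields $\limsup_n \phi_n(x)/n \le \Phi(\mathcal{F})$. The only genuinely delicate point is verifying that the "error" term $\phi_s\circ S^{qm}(x)$ contributes nothing in the $\limsup$: since $X$ is compact and each $\phi_s$ ($0\le s<m$) is dominated by a continuous — hence bounded — function, we get a uniform upper bound $\phi_s\le C$ on $X$, and $\phi_s(S^{qm}x)$ is also bounded below along the orbit only if $\phi_s$ is; but we do not need that, because $\phi_s(S^{qm}x)/n \le C/n \to 0$ suffices for the $\limsup$ of the (upper) quantity we are estimating. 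I expect this error-term handling, together with the bookkeeping in passing from $S$-genericity to $S^m$-genericity, to be the main thing requiring care; everything else is a faithful transcription of the standard argument.
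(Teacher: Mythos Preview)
Your overall strategy---fix $m$, dominate $\phi_m$ by a continuous $\psi_\eps$, and appeal to genericity---is the right one, and the handling of the remainder term $\phi_s$ is fine. But the step ``$\mu$-genericity of $x$ for $(X,S)$ implies $\mu$-genericity of $x$ for $(X,S^m)$'' is false in general, and your justification for it is circular. Knowing that the full $S$-average $\frac{1}{n}\sum_{i=0}^{n-1}\delta_{S^i x}$ converges to $\mu$ tells you only that the \emph{average} of the $m$ arithmetic-progression subsums converges; it does not force each subsum to converge. And the $S^m$-orbit $\{x,S^m x,S^{2m}x,\ldots\}$ is not a tail of the $S$-orbit, so that remark does not help. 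For a concrete failure: take $X=\{0,1\}$, $S$ the swap, $\mu$ uniform. Then $\mu$ is $S$-ergodic and $x=0$ is $\mu$-generic for $S$, but $S^2=\mathrm{id}$, so the $S^2$-averages at $0$ are identically $\delta_0\neq\mu$. With $\psi_\eps=\mathbf{1}_{\{0\}}$ you get $\limsup_q\frac{1}{q}\sum_{j=0}^{q-1}\psi_\eps(S^{2j}0)=1>\tfrac12=\int\psi_\eps\,d\mu$, so even the one-sided bound you need fails.

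The paper sidesteps this by a phase-averaging trick: for each offset $i\in\{1,\ldots,N\}$ (your $m$ is the paper's $N$) it writes $n=i+mN+(N+1-i)$ and bounds $\phi_n(x)\le 2L+\phi_{mN}(S^i x)\le 2L+\sum_{k=0}^{m-1}\phi_N(S^{i+kN}x)$. Summing this over $i=1,\ldots,N$ collapses the double sum into a genuine $S$-Birkhoff sum $\sum_{j=1}^{mN}\phi_N(S^j x)$, to which $\mu$-genericity for $S$ applies directly. Once you replace your single-phase decomposition by this averaged one, the rest of your argument goes through unchanged.
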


\begin{proof}
We start by noting that upper $C$-approximable functions are bounded above and integrable by definition. Fix $N \in \mathbb{N}$, $\varepsilon >0$ and let
\[
L := \max_{1 \leq i \leq N } \sup_{x\in X} \phi_i(x) < +\infty.
\]
By assumption, there exists a continuous function $\phi_{N,\eps}: X \to \R$ such that $\phi_N \leq \phi_{N,\eps}$ and $\int_{X} (\phi_{N,\eps}-\phi_N) d\mu < \varepsilon$.

Suppose that $n = (m+1)N +1$, for some $m \in \mathbb{N}$. Then we can write $n$ as $i +m N + (N + 1 -i)$ for every $ 1 \leq i \leq N$. By subadditivity,
\begin{align}
\phi_n(x) = \phi_{(m+1)N +1}(x) & \leq \phi_i(x) + \phi_{m N + (N + 1 -i)}(S^i (x)) \nonumber \\
& \leq \phi_i(x) +  \phi_{m N }(S^i (x)) + \phi_{N + 1 -i}(S^{i+mN} (x)) \nonumber \\
& \leq 2 L + \phi_{m N }(S^i (x)) \label{descocycle}.
\end{align}
Note also that $$\phi_{m N }(S^i (x)) \leq \phi_{N}(S^i (x)) +  \phi_{N}(S^{i+N} (x)) + \dots + \phi_{N}(S^{i+(m-1)N} (x)).$$
Therefore, if we sum over all $i$ ($1 \leq i \leq N$) in Equation \eqref{descocycle} and use the last inequality, we obtain
\begin{equation} \label{desgcocycle2}
N \phi_n(x) = N \phi_{(m+1)N +1}(x) \leq 2 L N + \sum_{j=1}^{mN} \phi_N(S^j (x)).
\end{equation}
Now, if $n=(m+1)N+1+r-1$ for some $2 \leq r \leq N$ then notice that
$$
\phi_n(x) \leq \phi_{(m+1)N +1}(x) + \phi_{r-1}(S^{(m+1)N +1}(x) ) \leq \phi_{(m+1)N +1}(x) + L.
$$
By \eqref{desgcocycle2} we obtain
$$
N \phi_n(x)\leq 3 L N + \sum_{j=1}^{mN} \phi_N(S^j (x))
$$
for all sufficiently large $n \in \mathbb{N}$, where we have written $n$ as $(m+1)N+r$ for some $m \in \N$ and $1 \leq r \leq N$. Dividing the above inequality by $Nn$ yields
\begin{equation}  \label{eq:upper-bound-fN-ergodic-avg}
\frac{\phi_n(x)}{n} \leq \frac{3L}{n} + \frac{m}{n}\Big(\frac{1}{mN}\sum_{j=1}^{mN} \phi_N(S^j (x)) \Big) \leq \frac{3L}{n} + \frac{m}{n}\Big(\frac{1}{mN}\sum_{j=1}^{mN} \phi_{N,\eps}(S^j (x)) \Big).
\end{equation}
Observe that $\frac{m}{n} \rightarrow \frac1N$ as $n \to \infty$, so that by taking $\limsup_{n \to \infty}$ in both sides and using the fact that $x$ is generic, we may conclude that
\begin{equation}\label{cotacocycle}
\limsup_{n \rightarrow +\infty} \frac{\phi_n(x)}{n} \leq \frac{1}{N} \left(\int_X \phi_N \;d\mu +\eps\right).
\end{equation}
Since the bound in \eqref{cotacocycle} holds for arbitrary $N \in \N$ and $\eps>0$, we obtain the result.
\end{proof}

In the special case of a compact group skew-product $(Y\times G, S, \mu\times m_G)$, we can apply Lemma \ref{lem:uniform-convergence} to get the following improvement.
\begin{corollary} \label{cor:uniform-in-fiber}
Let $(Y,T,\mu)$ a m.p.s. with $Y$ compact and $T$ continuous, let $G$ be a compact group with Haar measure $m_G$, and let $\alpha:Y\to G$ be continuous function. Denote the associated skew-product m.p.s. by $(X,S,\mu\times m_G)$. Furthermore, let $\mathcal{F}=(\phi_n)_{n=1}^\infty$ be a an upper $C$-approximable subadditive cocycle over $X$.

If $\mu\times m_G$ is ergodic, then for every $\mu$-generic point $\omega\in X$,
\[
\limsup_{n \rightarrow +\infty} \frac{\phi_n(\omega,g)}{n} \leq \inf_{n \in \mathbb{N}}\left\{ \frac{1}{n} \int_X \phi_n \;d(\mu\times m_G)\right\}\quad\text{uniformly in }g\in G.
\]
\end{corollary}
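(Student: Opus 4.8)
The plan is to re-run the proof of Theorem~\ref{furman} essentially verbatim, replacing the single appeal to genericity of the base point by the uniform-in-fiber convergence supplied by Lemma~\ref{lem:uniform-convergence}. The point to notice is that inequality~\eqref{eq:upper-bound-fN-ergodic-avg} in that proof is a purely pointwise estimate: for every $N\in\N$, every $\eps>0$, every $x\in X$, and every $n=(m+1)N+r$ with $1\le r\le N$ one has
\[
\frac{\phi_n(x)}{n} \;\leq\; \frac{3L}{n} + \frac{m}{n}\Big(\frac{1}{mN}\sum_{j=1}^{mN} \phi_{N,\eps}(S^j (x)) \Big),
\]
where $L=\max_{1\le i\le N}\sup_X\phi_i<\infty$ and $\phi_{N,\eps}$ is a \emph{continuous} function on $X=Y\times G$ with $\phi_N\le\phi_{N,\eps}$ and $\int_X(\phi_{N,\eps}-\phi_N)\,d(\mu\times m_G)<\eps$ (this last approximation being exactly what upper $C$-approximability of $\mathcal{F}$ with respect to $\mu\times m_G$ provides). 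In the proof of Theorem~\ref{furman}, genericity of $x$ was used only afterwards, to pass to the limit in the ergodic average of $\phi_{N,\eps}$.

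Now fix a $\mu$-generic $\omega$ and apply the displayed bound with $x=(\omega,g)$. Since $\phi_{N,\eps}$ is continuous on the compact space $X$ and $\mu\times m_G$ is ergodic, Lemma~\ref{lem:uniform-convergence} gives that $\frac1M\sum_{j=1}^{M}\phi_{N,\eps}(S^j(\omega,g))$ converges to $\int_X\phi_{N,\eps}\,d(\mu\times m_G)$ as $M\to\infty$, uniformly in $g\in G$. Applying this with $M=mN$ (note that $mN\to\infty$ as $n\to\infty$, and that $m/n\to 1/N$ and $3L/n\to 0$ with no dependence on $g$), we get: for every $\delta>0$ there is $n_0$ such that
\[
\frac{\phi_n(\omega,g)}{n}\;\le\;\frac1N\Big(\int_X\phi_{N,\eps}\,d(\mu\times m_G)\Big)+\delta\qquad\text{for all }n\ge n_0\text{ and all }g\in G .
\]
Letting $\delta\to0$, then using $\int_X\phi_{N,\eps}\,d(\mu\times m_G)\le\int_X\phi_N\,d(\mu\times m_G)+\eps$, and finally taking the infimum over $N\in\N$ and letting $\eps\to0$, yields
\[
\limsup_{n\to\infty}\ \sup_{g\in G}\ \frac{\phi_n(\omega,g)}{n}\ \le\ \inf_{N\in\N}\ \frac1N\int_X\phi_N\,d(\mu\times m_G),
\]
which is precisely the claimed bound, uniform in $g$ (in the sense that the $\limsup$ of the supremum over the fiber is controlled by $\Phi(\mathcal{F})$).

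I do not expect a serious obstacle; the argument is bookkeeping on top of the proof of Theorem~\ref{furman} and the already-established Lemma~\ref{lem:uniform-convergence}. The only points requiring a little care are: that the continuous approximants $\phi_{N,\eps}$ live on all of $X=Y\times G$, so that Lemma~\ref{lem:uniform-convergence} genuinely applies to them; that the factors $m/n$ and $3L/n$ appearing in \eqref{eq:upper-bound-fN-ergodic-avg} depend only on $n$ (hence the supremum over $g$ may be taken before the $\limsup$ without difficulty); and the harmless subtlety that $\int_X\phi_{N,\eps}\,d(\mu\times m_G)$ may be negative, which does not affect the estimate since it is a fixed constant multiplied by $m/n\to 1/N$.
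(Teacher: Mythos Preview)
Your proposal is correct and follows exactly the approach of the paper's own proof, which consists of the single sentence ``This follows from Lemma~\ref{lem:uniform-convergence} by letting $n\to\infty$ in the pointwise bound~\eqref{eq:upper-bound-fN-ergodic-avg}.'' You have simply expanded this one-liner, correctly noting that \eqref{eq:upper-bound-fN-ergodic-avg} is a pointwise inequality valid at every $x=(\omega,g)$, and that the only limiting step---the ergodic average of the continuous approximant $\phi_{N,\eps}$---is handled uniformly in $g$ by Lemma~\ref{lem:uniform-convergence}.
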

\begin{proof}
This follows from Lemma \ref{lem:uniform-convergence} by letting $n\to\infty$ in the pointwise bound \eqref{eq:upper-bound-fN-ergodic-avg}.
\end{proof}

\section{Proof of Theorem \ref{thm:main-projections}}

\subsection{Notation and preliminaries}
Recall that $Y=\{1,\ldots,N\}^\N$. For each $n \in \N$ let us define the $n$-th rotation $\mathbf{R}^n : Y\times S^1 \rightarrow S^1$ by the formula
$$
\mathbf{R}^n (\omega,v) := \pi_{S^1} \left( S^n (\omega,v) \right) =  \alpha(\om)\cdots \alpha(\bt^{n-1}\om)v,
$$
where $\pi_{S^1}$ denotes the projection from $Y\times S^1$ onto $S^1$.

It is easy to see that for $u \in \mathbb{X}^{(\omega)}_n$ we can decompose $f^{(\omega)}_{u}$ as
\begin{equation}\label{eq:composition-f}
f^{(\omega)}_{u}(x,y) = \lambda_{\omega_1} \cdots  \lambda_{\omega_{n}} \mathbf{R}^n(\omega,(1,0))\cdot (x,y) + d^{(\omega)}_u
\end{equation}
for a certain constant $d^{(\omega)}_u \in \R^2$ (here and it what follows we identify $\R^2$ with $\mathbb{C}\supset S^1$). Moreover, if $F^{(\omega)}_{u}$ denotes the inverse of $f^{(\omega)}_{u}$, then from \eqref{eq:composition-f} we obtain
\[
F^{(\omega)}_{u}(x,y) = \frac{\overline{\mathbf{R}^n(\omega,(1,0))}}{\lambda_{\omega_1} \cdots \lambda_{\omega_{n}}} \cdot \left( (x,y) - d^{(\omega)}_u \right).
\]

For each $\omega \in Y$ denote the projected measure $\Pi_v \eta^{(\omega)}$ by $\eta^{(\omega)}_v$, i.e. $\eta^{(\omega)}_v(B) = \eta^{(\omega)}(\Pi_v^{-1}(B))$ for every Borel set $B \subseteq \R$.

Also, for each $n \in \N$ we define $L^{(\omega)}_n$ as the unique nonnegative integer such that
\begin{equation}\label{eq:defln}
2^{-L^{(\omega)}_n} \leq \lambda_{\omega_1} \cdots   \lambda_{\omega_n} < 2^{1 -L^{(\omega)}_n},
\end{equation} and consider the family of intervals $\mathfrak{D}_n^{(\omega)}$ given by
\[
\mathfrak{D}^{(\omega)}_n= \mathfrak{D}_{L^\pom_n} = \{ [2^{-L^{(\omega)}_n}j,2^{-L^{(\omega)}_n}(j+1)) : j \in \mathbb{Z} \}.
\]
Notice that for each $\omega \in Y$ the families $\mathfrak{D}_n^\pom$ are nested: for every $n \in \N$, each element of $\mathfrak{D}^{(\omega)}_{n+1}$ is a subinterval of exactly one element of $\mathfrak{D}^{(\omega)}_n$. With this, for $q > 1$ we define the functions $\tau_{q,n}: Y\times S^1 \rightarrow \R$
\begin{equation}\label{eq:taudefi}
\tau_{q,n}(\omega,v) : = \sum_{I \in \mathfrak{D}^{(\omega)}_{n}} \left(\eta^{(\omega)}_v (I)\right)^q = \mathcal{C}^q_{\eta^{(\omega)}_v} (L^{(\omega)}_n),
\end{equation}
recall  \eqref{eq:def-Cq}.

To conclude these preliminaries, we state the version of Marstrand's Projection Theorem we alluded to in the introduction, due to Hunt and Kaloshin \cite[Theorem 1.1]{HuntKaloshin97}.
\begin{theorem} \label{thm:Marstrand}
 Let $\eta$ be a Borel probability measure on $\R^2$. If $q\in (1,2]$, then
 \[
  \underline{D}_q(\Pi_v\eta) = \min(\underline{D}_q\eta,1) \quad\text{for almost all }v\in S^1.
 \]
\end{theorem}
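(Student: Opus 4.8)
The plan is to split the statement into two parts: the easy upper bound $\underline{D}_q(\Pi_v\eta)\le\min(\underline{D}_q\eta,1)$, which holds for \emph{every} $v$, and the lower bound $\underline{D}_q(\Pi_v\eta)\ge\min(\underline{D}_q\eta,1)$ for $\mathcal{L}$-a.e. $v$, which I would obtain from an $L^1(\mathcal{L})$ averaging estimate combined with a Borel--Cantelli argument. For the upper bound: since $\Pi_v$ is $1$-Lipschitz, a dyadic strip $\Pi_v^{-1}(J)$ of width $2^{-n}$ contains every dyadic square it meets, and superadditivity of $t\mapsto t^q$ (valid since $q\ge 1$) gives $\mathcal{C}^q_{\Pi_v\eta}(n)\gtrsim \mathcal{C}^q_{\eta}(n)$, hence $\underline{D}_q(\Pi_v\eta)\le\underline{D}_q\eta$; the bound $\underline{D}_q(\Pi_v\eta)\le 1$ follows from the trivial estimate $\mathcal{C}^q_\mu(n)\gtrsim 2^{-n(q-1)}$ for a probability measure $\mu$ on $\R$ (restrict to a bounded set carrying at least half the mass and apply the power-mean inequality to the finitely many dyadic intervals it meets).

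For the lower bound, fix $s'<\min(\underline{D}_q\eta,1)$; it suffices to show that for $\mathcal{L}$-a.e.\ $v$ one has $\mathcal{C}^q_{\Pi_v\eta}(n)\le n^2\,2^{-ns'(q-1)}$ for all large $n$, since this forces $\underline{D}_q(\Pi_v\eta)\ge s'$, and letting $s'$ increase to $\min(\underline{D}_q\eta,1)$ along a sequence then finishes the proof (intersected with the upper bound). Writing $I_n(t)$ for the level-$n$ dyadic interval of $\R$ containing $t$, one has $\mathcal{C}^q_{\Pi_v\eta}(n)=\int(\Pi_v\eta(I_n(\Pi_v x)))^{q-1}\,d\eta(x)\le\int\eta(\{y:|\Pi_v(x-y)|\le 2^{-n}\})^{q-1}\,d\eta(x)$. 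Integrating in $v$, then applying Fubini, Jensen's inequality for the concave function $t\mapsto t^{q-1}$ (this is exactly where $q\le 2$ is used), Fubini again, and the elementary estimate $\mathcal{L}\{v\in S^1:|\Pi_v z|\le\delta\}\lesssim\min(1,\delta/|z|)$, I obtain
\[
\int_{S^1}\mathcal{C}^q_{\Pi_v\eta}(n)\,d\mathcal{L}(v)\ \lesssim\ \int\left(\int\min(1,2^{-n}/|x-y|)\,d\eta(y)\right)^{q-1}d\eta(x).
\]

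To bound the right-hand side I would use $\min(1,2^{-n}/|x-y|)\lesssim\sum_{j\ge 0}2^{-j}\,\mathbf{1}[|x-y|<2^{j-n}]$, and then, \emph{crucially} using $q-1\le 1$ so that $(\sum a_j)^{q-1}\le\sum a_j^{q-1}$, estimate the integral by $\sum_{j\ge 0}2^{-j(q-1)}\int\eta(B(x,2^{j-n}))^{q-1}d\eta(x)\lesssim\sum_{j\ge 0}2^{-j(q-1)}\,\mathcal{C}^q_\eta(n-j)$, where for $j\ge n$ one simply bounds $\mathcal{C}^q_\eta\le 1$ and uses the geometric factor. Since $s'<\underline{D}_q\eta$ gives $\mathcal{C}^q_\eta(m)\le 2^{-ms'(q-1)}$ for all large $m$, and since $s'<1$ makes the resulting series $\sum_j 2^{j(q-1)(s'-1)}$ converge, this yields $\int_{S^1}\mathcal{C}^q_{\Pi_v\eta}(n)\,d\mathcal{L}(v)\lesssim 2^{-ns'(q-1)}$. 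Chebyshev's inequality then gives $\mathcal{L}\{v:\mathcal{C}^q_{\Pi_v\eta}(n)>n^2\,2^{-ns'(q-1)}\}\lesssim n^{-2}$, which is summable, so Borel--Cantelli produces the desired pointwise-in-$n$ bound for $\mathcal{L}$-a.e.\ $v$.

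The genuinely technical (though not deep) part is the third step: the dyadic decomposition of the energy-type integral and its comparison with the sums $\mathcal{C}^q_\eta(m)$, where one must keep careful track of the role of $q-1\le 1$ (entering both in Jensen and in the $\ell^{q-1}\hookrightarrow\ell^1$ inequality) and of the ``almost monotonicity'' $\mathcal{C}^q_\eta(m+1)\le\mathcal{C}^q_\eta(m)$ that lets one absorb the bounded shifts between dyadic squares and balls of comparable radius. I expect the only real obstacle to be the bookkeeping of these elementary estimates; the restrictive hypothesis $q\le 2$ is essential precisely through the concavity/sublinearity of $t\mapsto t^{q-1}$, and both the method and the statement break down for $q>2$.
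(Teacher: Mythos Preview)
The paper does not prove this theorem; it is stated with a citation to Hunt and Kaloshin \cite{HuntKaloshin97} and used as a black box in the proof of Theorem~\ref{thm:main-projections}. So there is no ``paper's own proof'' to compare against.

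That said, your argument is essentially correct and is close in spirit to the original Hunt--Kaloshin proof. One small inaccuracy: it is not true that a dyadic strip $\Pi_v^{-1}(J)$ of width $2^{-n}$ \emph{contains} every level-$n$ dyadic square it meets; what is true (and what you actually need) is that each level-$n$ dyadic square projects into an interval of length at most $\sqrt{2}\cdot 2^{-n}$, hence meets $O(1)$ level-$n$ dyadic intervals, and superadditivity of $t\mapsto t^q$ then gives $\mathcal{C}^q_{\Pi_v\eta}(n)\gtrsim \mathcal{C}^q_\eta(n)$. The rest of your outline---the identity $\mathcal{C}^q_{\Pi_v\eta}(n)=\int(\Pi_v\eta(I_n(\Pi_v x)))^{q-1}d\eta(x)$, Jensen for the concave map $t\mapsto t^{q-1}$ (using $q\le 2$), the transversality bound $\mathcal{L}\{v:|\Pi_v z|\le\delta\}\lesssim\min(1,\delta/|z|)$, the dyadic decomposition combined with the sublinearity $(\sum a_j)^{q-1}\le\sum a_j^{q-1}$, and the Chebyshev/Borel--Cantelli conclusion---is correct and carefully identifies the two places where $q\le 2$ enters.
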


\subsection{A submultiplicative cocycle} \label{subsec:mult-cocycle}
Our aim  is to show that given $q >1$ there exists a continuous subadditive cocycle $\F_q=(\phi_{q,n})_{n \in \N}$ such that
\begin{equation}\label{eq:cdimensionq}
\liminf_{n \rightarrow +\infty} \frac{ \phi_{q,n}(\omega,v)}{-L^{(\omega)}_n} = \underline{D}^q( \eta^{(\omega)}_v),\quad \limsup_{n \rightarrow +\infty} \frac{ \phi_{q,n}(\omega,v)}{-L^{(\omega)}_n} = \overline{D}^q( \eta^{(\omega)}_v),
\end{equation}
for every $(\omega,v) \in Y \times S^1$. We do this in two steps. We first show that the family $(\log \tau_{q,n})_{n \in \N}$ for $\tau_{q,n}$ defined in \eqref{eq:taudefi} constitutes, up to additive constants, a subadditive cocycle. Then, we prove that there exists a ``smooth'' analogue $\overline{\tau}_{q,n}$ of $\tau_{q,n}$ which is continuous. From these facts it will follow that the cocycle $\F_q=\log \overline{\tau}_{q,n}$ enjoys the desired properties.

In this section we establish the core of this program, by showing that there exists $K_1 > 1$ such that for any $n,m \in \N$ and $(\omega,v) \in Y \times S^1$ one has
\begin{equation}\label{eq:mcocycle}
\tau_{n+m}(\omega,v) \leq K_1 \,\tau_n(\omega,v) \,\tau_{m}(S^n(\omega,v)),
\end{equation}
where we have suppressed $q$ from the notation for simplicity. This implies that the family $(\log K_1 \tau_n)_{n \in \N}$ is a subadditive cocycle.

We begin by introducing a definition.
\begin{definition} Given $M \in \N$, we say that two families $\mathfrak{P}, \mathfrak{P}'$ of sets are $M$-equivalent on $W$ (or simply $M$-equivalent) if
\begin{enumerate}
\item [(i)] $W\cap \bigcup_{A \in \mathfrak{P}} A =  W\cap \bigcup_{B \in \mathfrak{P'}} B$.
\item [(ii)] Each element of $\mathfrak{P}$ intersects at most $M$ elements of $\mathfrak{P}'$ and viceversa.
\end{enumerate}
\end{definition}

The following simple consequence of H\"{o}lder's inequality is proved in \cite[Lemma 5.3]{ShmerkinSolomyak15}.
\begin{lemma}\label{lem:equivalent-families} If $\mathfrak{P}$ and $\mathfrak{P}'$ are $M$-equivalent on $W$, and $\rho$ is a probability measure with $\rho(W)=1$, then
$$
M^{1-q} \sum_{B \in \mathfrak{P}'} \rho(B)^q \leq \sum_{A \in \mathfrak{P}} \rho(A)^q \leq M^{q-1} \sum_{B \in \mathfrak{P}'} \rho(B)^q.
$$
\end{lemma}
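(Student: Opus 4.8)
The statement to prove is Lemma~\ref{lem:equivalent-families}, the H\"older-inequality consequence about $M$-equivalent families.

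\medskip

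The plan is to reduce the symmetric statement to a single inequality. By hypothesis (i), both sums $\sum_{A\in\mathfrak{P}}\rho(A)^q$ and $\sum_{B\in\mathfrak{P}'}\rho(B)^q$ are over sets covering the full-measure set $W$; discarding from each family the sets that meet $W$ in a $\rho$-null set changes nothing, so I may assume every $A,B$ has $\rho(A),\rho(B)>0$ and that $W$ is covered by each family. The two inequalities are obtained from each other by swapping the roles of $\mathfrak{P}$ and $\mathfrak{P}'$, so it suffices to prove the right-hand inequality $\sum_{A\in\mathfrak{P}}\rho(A)^q\le M^{q-1}\sum_{B\in\mathfrak{P}'}\rho(B)^q$.

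\medskip

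First I would bound $\rho(A)$ for a fixed $A\in\mathfrak{P}$. Since $\bigcup_{B\in\mathfrak{P}'}B\supseteq W\cap A$ up to a $\rho$-null set, we have $\rho(A)\le\sum_{B:\,B\cap A\neq\varnothing}\rho(A\cap B)\le\sum_{B:\,B\cap A\neq\varnothing}\rho(B)$, and by condition (ii) this sum has at most $M$ terms. Applying the power-mean (or H\"older) inequality to a sum of at most $M$ nonnegative terms, $\big(\sum_{j=1}^{m}x_j\big)^q\le m^{q-1}\sum_{j=1}^{m}x_j^q$ for $q\ge 1$ and $m\le M$, gives
\[
\rho(A)^q\le M^{q-1}\sum_{B:\,B\cap A\neq\varnothing}\rho(B)^q.
\]
Summing over all $A\in\mathfrak{P}$ and interchanging the order of summation, each $B\in\mathfrak{P}'$ is counted once for every $A$ it meets, which by condition (ii) is at most $M$ times; however, to avoid the extra factor of $M$ one instead uses that $\rho(A\cap B)$, not $\rho(B)$, is the natural quantity: redo the chain as $\rho(A)\le\sum_{B}\rho(A\cap B)$, apply power-mean to get $\rho(A)^q\le M^{q-1}\sum_B\rho(A\cap B)^q$, sum over $A$, and then use $\sum_A\rho(A\cap B)^q\le\big(\sum_A\rho(A\cap B)\big)^q\le\rho(B)^q$ (again at most $M$ terms in the inner sum, and $\ell^q\le\ell^1$ for $q\ge1$), yielding $\sum_A\rho(A)^q\le M^{q-1}\sum_B\rho(B)^q$ as desired. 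The lower bound follows by symmetry.

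\medskip

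There is essentially no obstacle here: the only points requiring a little care are handling the overlaps (the sets in a family need not be disjoint, so one must work with $\rho(A\cap B)$ and use subadditivity rather than additivity) and the null-set cleanup so that (i) can be used as an honest partition of unity of $\rho$. Both are routine. Since this is exactly \cite[Lemma 5.3]{ShmerkinSolomyak15}, in the paper one would simply cite it; the sketch above is how one would reconstruct the short proof.
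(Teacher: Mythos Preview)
The paper does not give its own proof of this lemma; it simply cites \cite[Lemma~5.3]{ShmerkinSolomyak15}. Your reconstruction is the standard H\"older/power-mean argument and is essentially what one finds in that reference, so in spirit you match the paper.

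There is, however, one genuine slip in your sketch. In the step
\[
\sum_{A}\rho(A\cap B)^q \;\le\; \Bigl(\sum_{A}\rho(A\cap B)\Bigr)^q \;\le\; \rho(B)^q,
\]
the second inequality asserts $\sum_{A:\,A\cap B\neq\varnothing}\rho(A\cap B)\le \rho(B)$. This is \emph{not} a consequence of subadditivity (subadditivity gives the reverse inequality $\rho(B)\le\sum_A\rho(A\cap B)$); it requires that the sets $A$ meeting $B$ be pairwise disjoint on $B$. Your parenthetical ``at most $M$ terms'' does not help here either, since that only yields $\sum_A\rho(A\cap B)\le M\rho(B)$. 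Without some disjointness, the best you get is the bound with $M^q$ in place of $M^{q-1}$ (and indeed the lemma as literally stated can fail for overlapping families: take $\mathfrak{P}=\{A\}$ and $\mathfrak{P}'$ consisting of $M$ copies of $A$).

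This does not matter for the paper: in every application of the lemma here (dyadic intervals versus rescaled or enlarged intervals), at least one of the two families is genuinely disjoint, so your argument goes through verbatim for the direction needed; and in any case only the polynomial dependence on $M$ is relevant, so $M^q$ would serve equally well. Just be aware that the step you flagged as ``routine'' is exactly where an implicit disjointness hypothesis is being used.
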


Now, let us fix $n,m \in \N$, $(\omega,v) \in Y \times S^1$ and proceed to show \eqref{eq:mcocycle}. Recall that $\Delta_\om:\X^\pom\to \C^\pom$ is the coding map, and let $\opi_v:\X^\pom\to\R$ denote the composition $\Pi_v\circ\Delta_\om$.  Given $J \in \mathfrak{D}^{(\omega)}_n$ let us define
\begin{equation} \label{eq:def-J-family}
\mathbb{X}_{J}^{(\omega)}(v):=\{ u \in \mathbb{X}^{(\omega)}_n : [u]_\om \cap \opi_v^{-1}(J) \neq \emptyset \},
\end{equation}
and consider the interval $\widehat{J}$ which has the same center $x_J$ as $J$ but whose length is $|\widehat{J}|= 9|J|$, i.e.
\[
\widehat{J} = \left[x_J-\frac{9}{2}|J|, x_J + \frac{9}{2}|J|\right).
\]
It is not difficult to see that, by choosing $\widehat{J}$ in this way, one has $\opi_v[u]_\om\subset \widehat{J}$ for every $u \in \mathbb{X}_{J}^{(\omega)}(v)$.
Also, note that if $u\in\X_n^\pom$ and $y\in\X_\infty^{(\bt^n \om)}$, then $\Delta_\om(uy)=f_u^\pom(\Delta_{\bt^n \om}(y))$, and therefore
\begin{equation} \label{eq:symbolic-semi-conjugacy}
 \Delta_\om^{-1}(f_u^\pom A) \cap [u]_\om = \left\{ uy: y\in \Delta_{\bt^n \om}^{-1}(A)\right\}.
\end{equation}

If $I \in \mathfrak{D}_{n+m}^{(\omega)}$ is such that $I \subset J$, then
\begin{align}
\eta^{(\omega)}_{v}(I) & = \sum_{u \in \mathbb{X}_n^{(\omega)}} \oeta^{(\omega)} \big( [u]_\om \cap \opi_v^{-1}(I) \big)   \nonumber \\
&= \sum_{u \in \mathbb{X}_{J}^{(\omega)}(v)} \oeta^{(\omega)}|_{[u]_\om}\left(\opi_v^{-1}(I) \right) & \text{ by \eqref{eq:def-J-family}} \nonumber\\
&= \sum_{u \in \mathbb{X}_{J}^{(\omega)}(v)} \oeta^{(\omega)}|_{[u]_\om} \left( \Delta_\om^{-1} f^\pom_u F^\pom_u\Pi_v^{-1}(I) \right)\nonumber\\
&=\sum_{u \in \mathbb{X}_{J}^{(\omega)}(v)} \oeta^{(\omega)} \bigl( \{ uy: y\in\Delta_{\bt^n\om}^{-1}F^\pom_u\Pi_v^{-1}(I) \}\bigr) &\text{ by \eqref{eq:symbolic-semi-conjugacy}} \nonumber\\
& \leq K  \sum_{u \in \mathbb{X}_{J}^{(\omega)}(v)} \oeta^{(\omega)}([u]_\om) \; \bigl(\oeta^{(\mathbf{T}^{n}\om)}  \Delta_{\bt^n\om}^{-1}(F^\pom_u\Pi_v^{-1}(I))\bigr) &\text{ by \eqref{eq:measure-submultiplicative}.}\label{eq1}
\end{align}
Now, observe that, by definition of $\mathbb{X}_{J}^{(\omega)}(v)$,
\begin{equation}\label{eq2}
\sum_{u \in \mathbb{X}_{J}^{(\omega)}(v)} \oeta^{(\omega)}([u]_\om) \leq \eta^{(\omega)}_v (\widehat{J}).
\end{equation}
Therefore, using \eqref{eq1}, \eqref{eq2} and H\"{o}lder we obtain that
\begin{align*}
\left(\eta^{(\omega)}_v(I)\right)^q & \leq  \left( K \sum_{u \in \mathbb{X}_{J}^{(\omega)}(v)} \oeta^{(\omega)}([u]_\om) \; \eta^{(\mathbf{T}^{n}(\omega))} \left( F^{(\omega)}_{u} \Pi_v^{-1} (I) \right)  \right)^q  & \\
& \leq K^q \left( \sum_{u \in \mathbb{X}_{J}^{(\omega)}(v)} \oeta^{(\omega)}([u]_\om) \right)^{\frac{q}{q'}} \times\\
& \qquad \times \sum_{u \in \mathbb{X}_{J}^{(\omega)}(v)}
\oeta^{(\omega)}([u]_\om) \left(\eta^{(\mathbf{T}^{n}(\omega))}\left( F^{(\omega)}_{u}\Pi_v^{-1} (I)\right)\right)^q \\
& \leq K^q \big(\oeta^{(\omega)}_v(\widehat{J}) \big)^{q-1} \sum_{u \in \mathbb{X}_{J}^{(\omega)}(v)}
\oeta^{(\omega)}([u]_\om) \left(\eta^{(\mathbf{T}^{n}(\omega))} \left( F^{(\omega)}_{u} \Pi_v^{-1} (I)\right) \right)^q.
\end{align*}
Summing over all $I \in \mathfrak{D}^{(\omega)}_{n+m}$ such that $I \subset J$, we get
\begin{equation} \label{eq:submult-interm-step}
\sum_{I \in \mathfrak{D}^{(\omega)}_{n+m} \atop I \subset J}  \left(\eta^{(\omega)}_v(I)\right)^q \leq K^q \left(\eta^{(\omega)}_v(\widehat{J})\right)^{q-1} \Lambda_v(J),
\end{equation}
where
\begin{align}
\Lambda_v(J)&=  \sum_{I \in \mathfrak{D}^{(\omega)}_{n+m} \atop I \subset J} \sum_{u \in \mathbb{X}^{(\omega)}_{J}(v)}\oeta^{(\omega)}([u]_\om) \left(\eta^{(\mathbf{T}^{n}(\omega))} \big( F^{(\omega)}_{u}\Pi_v^{-1} (I) \big)\right)^q\nonumber\\
&= \sum_{u \in \mathbb{X}^{(\omega)}_{J}(v)}\oeta^{(\omega)}([u]_\om)\sum_{I \in \mathfrak{D}^{(\omega)}_{n+m} \atop I \subset J} \left(\eta^{(\mathbf{T}^{n}(\omega))} \big( F^{(\omega)}_{u}\Pi_v^{-1} (I) \big)\right)^q. \label{eq:exchanged-sum}
\end{align}

Now, using \eqref{eq:composition-f} it is not hard to see that for any such interval $I$ and $u \in \mathbb{X}^{(\omega)}_J(v)$ one has
\begin{equation}\label{eq:expression-inverse}
F^{(\omega)}_{u}\Pi_v^{-1} (I) = \left(\Pi_v \circ f^{(\omega)}_{u}\right)^{-1}(I) = \Pi_{v'}^{-1} \left(\frac{1}{\lambda_{\omega_1} \cdots \lambda_{\omega_{n}}} \cdot (I - \Pi_v(d^{(\omega)}_u))\right)
\end{equation}
where $v':= \mathbf{R}^n(\omega,v)$.  Write $\ell=\lambda_{\omega_{n+1}}\cdots \lambda_{\omega_{n+m}}$, and note that the family
\[
\left\{ \frac{1}{\lambda_{\omega_1} \cdots \lambda_{\omega_{n}}} (I - \opi_v( d^{(\omega)}_u)) : I \in \mathfrak{D}^{(\omega)}_{n+m}\right\}
\]
is composed of consecutive intervals of equal length between $\tfrac12\ell$ and $\ell$. Since the same is true for the family $\mathfrak{D}^{(\bt^n\omega)}_{m}$, these families are $6$-equivalent. It follows from Lemma \ref{lem:equivalent-families} and \eqref{eq:expression-inverse} that
\begin{equation}\label{sumareversa}
\sum_{I \in \mathfrak{D}^{(\omega)}_{n+m} \atop I \subset J} \left(\eta^{(\mathbf{T}^{n}(\omega))} \big( F^{(\omega)}_{u}\Pi_v^{-1} (I) \big)\right)^q \leq 6^{q-1} \tau_m(\mathbf{T}^{n}(\omega),v') = 6^{q-1} \tau_m( S^n(\omega,v)).
\end{equation}
Combining  \eqref{eq2}, \eqref{eq:submult-interm-step},  \eqref{eq:exchanged-sum} and \eqref{sumareversa} yields
\begin{equation}\label{sumaprefinal}
\sum_{I \in \mathfrak{D}^{(\omega)}_{n+m} \atop I \subset J}  \left(\eta^{(\omega)}_v(I)\right)^q \leq (6K)^q \left(\eta^{(\omega)}_v(\widehat{J})\right)^{q}\tau_m( S^n(\omega,v)).
\end{equation}
Finally, by summing \eqref{sumaprefinal} over all $J \in \mathfrak{D}^{(\omega)}_n$, we conclude
\begin{align*}
\tau_{n+m}(\omega,v) &= \sum_{J \in \mathfrak{D}^{(\omega)}_n} \sum_{I \in \mathfrak{D}^{(\omega)}_{n+m} \atop I \subset J} \left(\eta^{(\omega)}_v(I)\right)^q \\
  & \leq (6K)^q \tau_m( S^n(\omega,v)) \sum_{J \in \mathfrak{D}^{(\omega)}_n} \left(\eta^{(\omega)}_v(\widehat{J})\right)^q\\
  & \leq (54K)^q \tau_n(\omega,v) \tau_m( S^n(\omega,v))
\end{align*}
where to obtain the last inequality we used Lemma \ref{lem:equivalent-families} applied to the families $\mathfrak{D}^{(\omega)}_n$ and $\{ \widehat{J} : J \in \mathfrak{D}^{(\omega)}_n\}$, which are $9$-equivalent. This gives \eqref{eq:mcocycle} for $K_1:= (54K)^q$.

\subsection{A continuous analog of $\tau_{n}$} \label{subsec:continuous-analog} We now construct for each $n \in \N$ a continuous function $\overline{\tau}_n$ which is comparable up to multiplicative constants to $\tau_n$. To this end, we consider $\psi \in C^\infty_0(\R)$ supported on $[-2,2)$ such that $0 \leq \psi \leq 1$ and $\psi|_{[-1,1)} \equiv 1$. For each $n \in \N$ and $\omega \in Y$ define $\psi_n^{(\omega)} : \R^2 \rightarrow \R$ by the formula
$$
\psi_n^{(\omega)}(x,y) = \psi ( 2^{L_n^{(\omega)}}(x - y) ).
$$
For any fixed $y \in \R$ the function $\psi_{n,y}^{(\omega)}(x):= \psi_n^{(\omega)}(x,y)$ is supported on the interval $[y - 2^{1-L_n^{(\omega)}}, y + 2^{1-L_n^{(\omega)}})$ and is equal to 1 on the interval $[y-2^{-L_n^{(\omega)}}, y + 2^{-L_n^{(\omega)}})$. Define $\overline{\tau}_n:Y\times S^1 \rightarrow \R$ by the formula
\[
\overline{\tau}_n(\omega,v):= \int_\R \left( \int_\R \psi_n^{(\omega)}(x,y) d\eta_v^{(\omega)}(x) \right)^{q-1} d\eta_v^{(\omega)}(y).
\]
Notice that $\overline{\tau}_n$ can be rewritten as
\[
\overline{\tau}_n(\omega,v)= \int_{\mathbb{X}_\infty} \left(\Psi_n^{(\omega)}(u,v)\right)^{q-1}d \oeta^{(\omega)}(u),
\]
where
\[
\Psi_n^{(\omega)}(u,v):=\int \psi_n^{(\omega)}(\overline{\Pi}_v(u) , \overline{\Pi}_v(u')) \,d\oeta^{(\omega)}(u').
\]
We claim that $\overline{\tau}_n$ is continuous. Indeed, this is a consequence of the following fact.
\begin{lemma} \label{lem:continuity}
 Let $X$ be a compact metric space and let $\{ \rho^\pom: \om\in X\}$ be a family of Borel probability measures on some other compact metric space $Z$, such that $\om\mapsto\rho^\pom$ is continuous. Then, for any continuous function $h:X\times Z\to\R$, the function $\om\mapsto \int h(\om,u)\,d\rho^\pom(u)$ is continuous.
\end{lemma}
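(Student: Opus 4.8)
The plan is to prove Lemma~\ref{lem:continuity} directly from the definition of weak convergence, treating the subtlety that both the measure and the integrand vary with $\omega$. Fix $\omega_0\in X$ and a sequence $\omega_j\to\omega_0$; I want to show $\int h(\omega_j,u)\,d\rho^{(\omega_j)}(u)\to\int h(\omega_0,u)\,d\rho^{(\omega_0)}(u)$. The natural decomposition is
\[
\int h(\omega_j,u)\,d\rho^{(\omega_j)}(u) - \int h(\omega_0,u)\,d\rho^{(\omega_0)}(u) = \underbrace{\int \big(h(\omega_j,u)-h(\omega_0,u)\big)\,d\rho^{(\omega_j)}(u)}_{(\mathrm{I})} + \underbrace{\int h(\omega_0,u)\,d\rho^{(\omega_j)}(u) - \int h(\omega_0,u)\,d\rho^{(\omega_0)}(u)}_{(\mathrm{II})}.
\]
Term $(\mathrm{II})$ tends to $0$ because $u\mapsto h(\omega_0,u)$ is a fixed continuous (hence bounded, since $Z$ is compact) function on $Z$ and $\rho^{(\omega_j)}\to\rho^{(\omega_0)}$ weakly by hypothesis. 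So the whole content is in bounding $(\mathrm{I})$.

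For term $(\mathrm{I})$, the key point is \emph{uniform} continuity of $h$: since $X\times Z$ is a compact metric space, $h$ is uniformly continuous, so given $\varepsilon>0$ there is $\delta>0$ such that $|h(\omega,u)-h(\omega',u')|<\varepsilon$ whenever the distances satisfy $d_X(\omega,\omega')<\delta$ and $d_Z(u,u')<\delta$; in particular, taking $u=u'$, $\sup_{u\in Z}|h(\omega_j,u)-h(\omega_0,u)|<\varepsilon$ once $j$ is large enough that $d_X(\omega_j,\omega_0)<\delta$. Since each $\rho^{(\omega_j)}$ is a probability measure, $|(\mathrm{I})|\le \sup_{u\in Z}|h(\omega_j,u)-h(\omega_0,u)|<\varepsilon$ for all large $j$. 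Combining with the convergence of $(\mathrm{II})$ gives $\limsup_j\big|\int h(\omega_j,u)\,d\rho^{(\omega_j)}(u)-\int h(\omega_0,u)\,d\rho^{(\omega_0)}(u)\big|\le\varepsilon$, and letting $\varepsilon\to 0$ finishes the sequential continuity argument, which suffices since $X$ is metric.

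There is no real obstacle here — the proof is a routine $\varepsilon$-argument — but the one point that deserves care is making sure the splitting isolates the two sources of variation cleanly: the dependence of the integrand on $\omega$ (handled by uniform continuity of $h$ on the compact product space, using that all $\rho^{(\omega)}$ are probability measures so the mass is controlled) and the dependence of the measure on $\omega$ (handled by the assumed weak continuity $\omega\mapsto\rho^{(\omega)}$ applied to the single fixed test function $h(\omega_0,\cdot)$). If one wanted to avoid sequences, one could phrase the same estimate directly with a neighborhood of $\omega_0$, but since $X$ is metric the sequential formulation is cleanest. Once Lemma~\ref{lem:continuity} is established, continuity of $\overline{\tau}_n$ follows by applying it with $X = Y\times S^1$ (parametrized by $(\omega,v)$, noting $L_n^{(\omega)}$ depends only on finitely many coordinates of $\omega$ and is thus locally constant, so $\psi_n^{(\omega)}$ varies continuously), $Z = \X_\infty\times\X_\infty$ with measure $\oeta^{(\omega)}\times\oeta^{(\omega)}$, and the integrand $h\big((\omega,v),(u,u')\big) = \big(\text{suitable expression built from }\psi_n^{(\omega)}(\overline{\Pi}_v(u),\overline{\Pi}_v(u'))\big)$; a small additional wrinkle is the outer exponent $q-1\in(0,1]$, which is handled by noting $t\mapsto t^{q-1}$ is continuous on $[0,\infty)$ and the inner integral $\Psi_n^{(\omega)}(u,v)$ is bounded, so one may first apply the lemma to the inner integral and then compose with the continuous power function.
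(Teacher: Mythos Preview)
Your proof is correct and follows essentially the same approach as the paper: both use the decomposition into a term controlled by uniform continuity of $h$ on the compact product $X\times Z$ (your term (I)) and a term controlled by weak convergence of $\rho^{(\omega_j)}$ against the fixed test function $h(\omega_0,\cdot)$ (your term (II)). The paper phrases the argument with $\limsup_{\omega'\to\omega}$ rather than sequences, but the content is identical.
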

\begin{proof}
By uniform continuity, given $\eps>0$ we have $|h(\om,u)-h(\om',u)|<\eps$ provided $d(\om,\om')$ is small enough. It follows that
\begin{align*}
 \limsup_{\om'\to\om} & \left|\int h(\om,u)\,d\rho^\pom(u)- \int h(\om',u)\,d\rho^{(\om')}(u)\right| \le\\
    & \eps +\lim_{\om'\to\om}  \left|\int h(\om,u)\,d\rho^\pom(u)- \int h(\om,u)\,d\rho^{(\om')}(u)\right| = \eps.
\end{align*}
\end{proof}

Note that $L_n(\om)$ is continuous, since it depends only on the first $n$ coordinates of $\om$, and hence $\psi_n^\pom(x)$ is jointly continuous. We can then apply Lemma \ref{lem:continuity} with $X=Y\times \X_\infty\times S^1$, $Z=\X_\infty$, $\rho^{(\om,u,v)}=\overline{\eta}^\pom$, and $h((\om,u,v),u')= \psi_n^{(\omega)}(\overline{\Pi}_v(u) , \overline{\Pi}_v(u'))$ to obtain that $\Psi_n^{(\omega)}(u,v)$ is continuous in $(\omega,u,v)$. A second application of Lemma \ref{lem:continuity} yields the continuity of $\overline{\tau}_n$.

It remains to see that $\overline{\tau}_n$ is equivalent to $\tau_n$, i.e. there exists $M > 1$ such that
\begin{equation} \label{eq:equivalence}
M^{-1}\tau_n \leq \overline{\tau}_n \leq M \tau_n.
\end{equation}
To show the rightmost inequality we notice that for any $(\omega,v) \in Y\times S^1$ we have that
\begin{align*}
\overline{\tau}_n(\omega,v) &= \sum_{j \in \Z} \int_{j2^{-L_n^{(\omega)}}}^{(j+1)2^{-L_n^{(\omega)}}} \left(\int_{\R} \psi_n^{(\omega)}(x ,y) d\eta^{(\omega)}_v(x)\right)^{q-1} d\eta^{(\omega)}_v(y) \\
& \leq \sum_{j \in \Z} \int_{j2^{-L_n^{(\omega)}}}^{(j+1)2^{-L_n^{(\omega)}}} \left(\eta^{(\omega)}_v([y - 2^{1-L_n^{(\omega)}}, y + 2^{1-L_n^{(\omega)}}))\right)^{q-1} d\eta^{(\omega)}_v(y)\\
& \leq \sum_{j \in \Z} \int_{j2^{-L_n^{(\omega)}}}^{(j+1)2^{-L_n^{(\omega)}}} \left(\eta^{(\omega)}_v([(j-2)2^{-L_n^{(\omega)}}, (j+3)2^{-L_n^{(\omega)}}))\right)^{q-1} d\eta^{(\omega)}_v(y)\\
& \leq \sum_{j \in \Z} \left[\eta^{(\omega)}_v([(j-2)2^{-L_n^{(\omega)}}, (j+3)2^{-L_n^{(\omega)}}))\right]^q
\end{align*}
which, upon noticing that the families $\mathfrak{D}^{(\omega)}_n$ and $\{ [(j-2)2^{-L_n^{(\omega)}}, (j+3)2^{-L_n^{(\omega)}}) : j \in \Z \}$ are $5$-equivalent, implies that $\overline{\tau}_n \leq 5^{q-1} \tau_n$.
On the other hand, to establish the leftmost inequality we observe that for any $\omega \in Y$ and $j \in \Z$ we have the inclusion
$$
[j2^{-L_n^{(\omega)}},(j+1)2^{-L_n^{(\omega)}}) \subseteq [y-2^{-L_n^{(\omega)}},y+2^{-L_n^{(\omega)}})
$$
whenever $y \in [j2^{-L^{(\omega)}_n},(j+1)2^{-L^{(\omega)}_n})$. Thus, for $(\omega,v) \in Y \times S^1$ this yields
\begin{align*}
\tau_{n}(\omega,v) & = \sum_{j \in \Z} \left(\eta^{(\omega)}_v([j2^{-L^{(\omega)}_n},(j+1)2^{-L^{(\omega)}_n}))\right)^q\\
& \leq \sum_{j \in \Z} \int_{j2^{-L^{(\omega)}_n}}^{(j+1)2^{-L^{(\omega)}_n}} \left(\eta^{(\omega)}_v([y-2^{-L_n^{(\omega)}},y+2^{-L_n^{(\omega)}}))\right)^{q-1} d\eta^{(\omega)}_v (y)\\
& \leq \sum_{j \in \Z} \int_{j2^{-L^{(\omega)}_n}}^{(j+1)2^{-L^{(\omega)}_n}} \left(\int_\R \psi_n^{(\omega)}(x ,y) d\eta^{(\omega)}_v(x)\right)^{q-1} d\eta^{(\omega)}_v (y) = \overline{\tau}_n(\omega,v)
\end{align*} which shows that
\[
\tau_n \leq \overline{\tau}_n \leq 5^{q-1} \tau_n
\]
and so both quantities are indeed equivalent. Furthermore, if we replace $\overline{\tau}_n$ by $5^{q-1}K_1\overline{\tau}_n$, where the constant $K_1$ is as in \eqref{eq:mcocycle}, then \eqref{eq:equivalence} still holds (for a different constant $M$), and
\[
\log\overline{\tau}_{n+m}(\om,v) \le \log\overline{\tau}_n(\om,v) + \log\overline{\tau}_m(S^n(\om,v)).
\]
Notice that $\log\overline{\tau}_n$ is well defined by \eqref{eq:equivalence}, since $\tau_n$ is strictly positive by its mere definition. Furthermore, each $\log\tau_n$ is continuous, since $\tau_n$ is. Thus, we conclude that the sequence $(\log\overline{\tau}_n)_{n \in \N}$ is a continuous subadditive cocycle on $Y \times S^1$.

\subsection{The proof of Equation \eqref{eq:cdimensionq}}

Write $\phi_n=\log\overline{\tau}_n$ for simplicity. We can now show that
\begin{equation}\label{correlacion}
\underline{D}^q(\eta^{(\omega)}_v) = \liminf_{n \rightarrow +\infty} \frac{\phi_n(\omega,v)}{-(q-1) L_n^{(\omega)}}
\end{equation}
for all $(\omega,v) \in Y \times S^1$, and likewise for $\overline{D}^q(\eta^{(\omega)}_v)$. It follows from the definition of $\tau_n$ and \eqref{eq:equivalence} that
\[
\left|\phi_n(\om,v)-\log\mathcal{C}_{\eta^\pom_v}^q(L_n^\pom)\right|
\]
is uniformly bounded (independent of $n$). Since $L_n^\pom\to\infty$ as $n\to\infty$, it is enough to check that
\[
 \liminf_{n\to\infty} \frac{\log\mathcal{C}_{\eta^\pom_v}^q(L_n^\pom)}{-L_n^\pom} = \liminf_{k\to\infty} \frac{\log\mathcal{C}_{\eta^\pom_v}^q(k)}{-k}.
\]
The ``$\ge$'' inequality is clear, since the limit in the left is taken along a subsequence. To see the other inequality, fix $k$ and choose $n$ such that $L_n^\pom \le k < L_{n+1}^\pom$. Note that $L_{n+1}^\pom \le L_n^\pom + \ell^*$, where $\ell^*=1+\max_{i=1}^N |\log\lambda_i|$. On the other hand, the sequence $k\mapsto \mathcal{C}_{\nu}^q(k)$ is always decreasing for $q>1$, since for a dyadic interval $J=I_1\cup I_2$ one has $\nu(J)^q\ge \nu(I_1)^q+\nu(I_2)^q$. Hence
\[
 \frac{\log\mathcal{C}_{\eta^\pom_v}^q(k)}{-k} \geq \frac{\log\mathcal{C}_{\eta^\pom_v}^q(L_n^\pom)}{-L_{n+1}^\pom} \geq  \frac{\log\mathcal{C}_{\eta^\pom_v}^q(L_n^\pom)}{\ell^*-L_{n}^\pom},
\]
provided $n$ is large enough.

The claim for $\underline{D}_q$ follows by taking a limit along an appropriate subsequence of $k$, and the case of $\overline{D}_q$ is analogous.

\subsection{A subadditive cocycle for the measures $\eta^\pom$}

The foregoing analysis of the measures $\eta^\pom_v$ has a corresponding, but simpler, correlate for the measures $\eta^\pom$. Since the proofs are very similar, we only state the results, leaving the details to the interested reader. Let $\mathfrak{Q}_n$ denote the family of dyadic squares
\[
\{  [j_1 2^{-n}, (j_1+1) 2^{-n})\times [j_2 2^{-n}, (j_2+1) 2^{-n}) : j_1, j_2\in\Z \}.
\]
Write $\mathfrak{Q}_n^\pom = \mathfrak{Q}_{L_n^\pom}$, and define
\[
\xi_n(\om) = \sum_{Q\in\mathfrak{Q}_n^\pom} \left(\eta^\pom(Q)\right)^q.
\]
Then one can check, as before, that there exists a sequence of \emph{continuous} functions $\overline{\xi}_n$, such that
\[
M^{-1}\xi_n(\om) \le \overline{\xi}_n(\om) \le M \xi_n(\om)
\]
for some constant $M>0$ (depending on $q$) and all $\om\in Y$, and furthermore
\[
\overline{\xi}_{n+m}(\om) \le  \overline{\xi}_n(\om)\, \overline{\xi}_m(\bt^n(\om)).
\]
From here one can deduce, as we have done for the projections $\eta^\pom_v$, that
\begin{equation} \label{eq:cocycle-dim-upstairs}
\underline{D}_q(\eta^{(\om)}) = \liminf_{n\to\infty} \frac{\log \overline{\xi}_n(\om)}{-L_n^\pom},\quad \overline{D}_q(\eta^{(\om)}) = \limsup_{n\to\infty} \frac{\log \overline{\xi}_n(\om)}{-L_n^\pom}.
\end{equation}

\subsection{Conclusion of the proof}

We start by applying \eqref{eq:cocycle-dim-upstairs} to show that $D_q(\eta^\pom)$ exists and is constant $\mu$-almost everywhere.  However, before we can do so it is clear that we must first understand the behavior of the quotient $\frac{L_n^{(\omega)}}{n}$ as $n$ tends to infinity. This is the purpose of the following lemma.

\begin{lemma}\label{lem:limite} If $\omega \in Y$ is $\mu$-generic, then
$$
\lim_{n \rightarrow +\infty} \frac{ -L_n^{(\omega)}}{n} = \int_{Y} \log \lambda_{\om_1} d\mu(\omega).
$$
\end{lemma}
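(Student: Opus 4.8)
The plan is to reduce the statement to a direct application of the definition of $\mu$-genericity; there is essentially nothing deep here beyond the crude comparison of $L_n^{(\omega)}$ with a partial product of the contraction ratios. First I would take logarithms (base $2$, as is our convention) in the defining inequality \eqref{eq:defln} for $L_n^{(\omega)}$, obtaining
\[
-L_n^{(\omega)} \le \log(\lambda_{\omega_1}\cdots\lambda_{\omega_n}) < 1 - L_n^{(\omega)},
\]
and hence
\[
\left| -L_n^{(\omega)} - \sum_{j=1}^n \log\lambda_{\omega_j} \right| \le 1
\]
for every $\omega\in Y$ and every $n\in\N$. Dividing by $n$, the right-hand side contributes only an $O(1/n)$ error, so it suffices to prove that $\tfrac1n\sum_{j=1}^n \log\lambda_{\omega_j}$ converges to $\int_Y \log\lambda_{\omega_1}\,d\mu(\omega)$.

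Next I would observe that the function $f:Y\to\R$ defined by $f(\omega)=\log\lambda_{\omega_1}$ depends only on the first coordinate of $\omega$ and takes only the finitely many values $\log\lambda_1,\ldots,\log\lambda_N$; in particular it is continuous and bounded on $Y$ with the product topology. Since $f(\mathbf{T}^{j-1}\omega)=\log\lambda_{\omega_j}$, we have
\[
\frac1n\sum_{j=1}^n \log\lambda_{\omega_j} = \frac1n\sum_{j=0}^{n-1} f(\mathbf{T}^j\omega),
\]
which is precisely the $n$-th Birkhoff average of $f$ along the orbit of $\omega$ under $\mathbf{T}$.

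Finally, since $\omega$ is $\mu$-generic we have $\tfrac1n\sum_{i=1}^n \delta_{\mathbf{T}^i\omega}\to\mu$ weakly; applying this to the continuous function $f$ — the discrepancy between $\sum_{i=1}^n$ and $\sum_{i=0}^{n-1}$ being $\tfrac1n(f(\omega)-f(\mathbf{T}^n\omega))\to 0$ because $f$ is bounded — gives $\tfrac1n\sum_{j=1}^n\log\lambda_{\omega_j}\to\int_Y\log\lambda_{\omega_1}\,d\mu(\omega)$. Combining this with the first step yields the asserted limit. I do not expect any genuine obstacle: the only points needing a little care are the elementary estimate extracted from \eqref{eq:defln} and the harmless index shift in the ergodic average.
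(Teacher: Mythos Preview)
Your proof is correct and follows essentially the same approach as the paper's: reduce to the Birkhoff average of the continuous function $\omega\mapsto\log\lambda_{\omega_1}$ via the defining inequality \eqref{eq:defln}, then invoke $\mu$-genericity. You merely spell out the $O(1/n)$ error and the index shift more explicitly than the paper does.
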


\begin{proof} Since by definition of $L_n^{(\omega)}$ we have $2^{-L_n^{(\omega)}} < \prod_{i=1}^{n} \lambda_{\omega_i} \leq 2^{1-L_n^{(\omega)}}$ for every $n \in \N$, it suffices to show that
$$
\lim_{n \rightarrow +\infty} \frac{ \sum_{i=0}^{n-1} \log\left(\lambda_{(\mathbf{T}^i(\omega))_1} \right)}{n} = \int_{Y} \log \lambda_{\om_1} d\mu(\omega).
$$
But this follows at once from the fact that $\omega$ is $\mu$-generic, since the application $\omega \mapsto \log \lambda_{\om_1}$ is continuous on $Y$.
\end{proof}

Now it follows from the subadditive ergodic theorem that for $\mu$-almost all $\om$ it holds that
\[
 \lim_{n\to\infty} \frac{\log\overline{\xi}_n(\om)}{-L_n^\pom}  = \frac{\inf_{n\in\N} \left[ \tfrac1n \int_Y \log\overline{\xi}_n(\om)\,d\mu(\om)\right] }{\int_{Y} \log (\lambda_{\om_1}) d\mu(\omega)}=: D(q).
\]
We deduce from \eqref{eq:cocycle-dim-upstairs} that $D_q(\eta^\pom)$ exists and equals $D(q)$ for $\mu$-almost all $\om$. Furthermore, if $\om$ is $\mu$-generic, then it follows from Theorem \ref{furman} that
\[
  \liminf_{n\to\infty} \frac{\log\overline{\xi}_n(\om)}{-L_n^\pom} \ge D(q),
\]
whence $\underline{D}_q(\eta^\pom) \ge D(q)$.

Now we move onto the projections $\eta_v^\pom$. Let us begin by observing that for all $(\omega,v) \in Y\times S^1$ we have
$$
\underline{D}^q(\eta^\pom_v) \le \overline{D}^q(\eta^{(\omega)}_v) \leq \min\{ \overline{D}^q(\eta^{(\omega)}),1\}.
$$
Indeed, this follows from the well-known facts that $\overline{D}_q$ does not increase under Lipschitz maps, and can never exceed the dimension of the ambient space.

Now, since $\mu \times \mathcal{L}_\beta$ is ergodic by assumption, Corollary \ref{cor:uniform-in-fiber} combined with Lemma \ref{lem:limite} imply that any $\mu$-generic $\om$ satisfies, for each $v \in S^1$,
\begin{equation}\label{ineqcorrelacion}
\min\{ \overline{D}^q(\eta^{(\omega)}),1\} \geq \underline{D}^q(\eta^{(\omega)}_v) = \liminf_{n \rightarrow +\infty} \frac{\phi_n(\omega,v)}{-L_n^{(\omega)}} = \frac{1}{\mu^*} \liminf_{n \rightarrow +\infty} \frac{\phi_n(\omega,v)}{n} \geq \frac{\Phi}{\mu^*}
\end{equation} where
$$
\Phi := \inf_{n \in \N} \left[ \frac{1}{n} \int_{Y} \phi_n d(\mu \times \mathcal{L}_\beta)\right],\quad \mu^*:= \int_{Y} \log \lambda_{\om_1} d\mu(\omega).
$$
Hence, given any $\om\in Y$, if we wish to prove \eqref{eq:main-result}, then it suffices to show that
\begin{equation}\label{eqcorrelacion}
\min\{ \overline{D}^q(\eta^{(\omega)}),1\} = \frac{\Phi}{\mu^*}
\end{equation}

Since $(\phi_n)_{n \in \N}$ is a bounded subadditive cocycle, the subadditive ergodic theorem yields upon an application of the Fubini theorem that $\mu$-almost every $\omega \in Y$ satisfies
$$
\lim_{n \rightarrow +\infty} \frac{\phi_n (\omega,v)}{-L_n^\pom} = \frac{\Phi}{\mu^*}
$$
for $\mathcal{L}$-almost every $v \in S^1$. In light of \eqref{eq:cdimensionq}, $\frac{\Phi}{\mu^*}$ equals the $(\mu\times\mathcal{L})$-almost sure value of $D_q(\eta_v^\pom)$, and from Theorem \ref{thm:Marstrand} and Fubini, we deduce that this equals $\min(D(q),1)$ (this is the point of the proof where we use that $q\le 2$). Hence, if we let
\[
 \mathcal{E} = \{ \om\in Y: D_q(\eta^\pom) = D(q)\},
\]
then \eqref{eqcorrelacion} holds for all $\om\in\mathcal{E}$, and if $D(q)\ge 1$, also for $\om$ in the set $\mathcal{G}$ of $\mu$-generic points.

We conclude that for every $\omega$ in the full $\mu$-measure set $\mathcal{G} \cap \mathcal{E}$,  all the inequalities in \eqref{ineqcorrelacion} are equalities, and hence \eqref{eq:main-result} is satisfied. If $\mathcal{G}\subset\mathcal{E}$, or if $D(q)\ge 1$, then \eqref{eq:main-result} holds for any $\mu$-generic $\omega$.

The claim concerning uniform convergence over $v\in S^1$ follows from the above analysis, and the uniformity in Corollary \ref{cor:uniform-in-fiber} (which implies that the rightmost inequality in \eqref{ineqcorrelacion} holds uniformly in $v$, for any fixed $\om\in \mathcal{G}$). This finishes the proof of Theorem \ref{thm:main-projections}.

\section{Proof of Theorem \ref{thm:mainconvolutions}}

\subsection{Preliminaries}

The proof of Theorem \ref{thm:mainconvolutions} follows the same general outline as the proof of Theorem \ref{thm:main-projections}. We will therefore indicate where the main differences lie, and sketch or omit the parts of the proof that closely follow the arguments from Theorem \ref{thm:main-projections}.

Given $\om\in Y$, write $\oeta^\pom=\overline{\nu}^\pom\times\otheta$. For $s \in [-\beta,\beta)$ we consider the orthogonal projection $\Pi_s$ onto the linear space generated by the vector $(1,e^s)$, i.e. $\Pi_s(x,y)= x + e^sy$, and write $\opi_s=\Pi_s\circ \Delta_\om$ where, abusing notation slightly, we denote also by $\Delta_\om:\X_\infty^\pom\times \X'\to [0,1]^2$ the product coding map
\[
\Delta_\om(u,u')=(\Delta_\om(u),\Delta'(u')).
\]
For each $\omega \in Y$, denote the projected measure $\opi_s \oeta^{(\omega)}$ by $\eta^{(\omega)}_s$. Then $\eta^\pom_s$ is nothing else than the convolution $\nu^\pom* A_{e^s}\vartheta$ we are interested in.

For $(u,v) \in \X^{(\omega)}_n \times \X'_{n'}$ we define the product function $h^{(\omega)}_{u,v} : [0,1]^2 \rightarrow [0,1]^2$ by the formula
\[
h^{(\omega)}_{u,v}(x,y) := \left(f_u^{(\omega)}(x),g_v(y)\right).
\]
In the course of the proof it will be important to work with families of pairs $(u,v)$ such that the eccentricity of the rectangle $h^\om_{u,v}([0,1])^2$ is bounded, and behaves like a rotation under the action of the skew-product $\mathbf{S}$. In order to do this, we need to introduce a number of families of pairs of words. Similar families appear in \cite{NPS12}, although here we will need an extra family due to the somewhat more complicated setting.

Hence, let us consider the families $\W^{(\omega)}=(\W^{(\omega)}_n)_{n \in \N}$, $\mathbb{Y}^{(\omega)}=(\mathbb{Y}^{(\omega)}_n)_{n \in \N}$ and $\mathbb{Z}^{(\omega)}=(\mathbb{Z}^{(\omega)}_n)_{n \geq 3l}$ of word pairs defined as
\begin{align*}
\W^{(\omega)}_n &=  \X^{(\omega)}_{rn} \times \X'_{n +2l \xi^{(\omega)}_n},\\
\mathbb{Y}^{(\omega)}_n &= \X^{(\omega)}_{rn} \times \X'_{n +2l (\xi^{(\omega)}_n+1)},\\
\mathbb{Z}^{(\omega)}_n &= \X^{(\omega)}_{rn} \times \X'_{n +2l (\xi^{(\omega)}_n-1)},
\end{align*}
where $\xi^{(\omega)}_n := \#\left\{k \in \{1, \dots, n \} : \mathbf{R}^{k-1}(\omega,0) + \alpha(\mathbf{T}^{rk}(\omega)) \geq \beta\right\}$ counts the number of times $k \leq n$ for which the $k$-th rotation of the origin $0 \in S^1_\beta$ given by the term $\mathbf{R}^k(\omega,0)$ crosses the endpoint $\beta$. If we identify each word $(u,v) \in \X^{(\omega)}_n \times \X'_{n'}$ with the rectangle $Q^{(\omega)}_{u,v}:=I^{(\omega)}_u \times I_v$, obtained as the image of the function $h^{(\omega)}_{u,v}$, then we have the following properties of $\W^{(\omega)}$, $\mathbb{Y}^{(\omega)}$ and $\mathbb{Z}^{(\omega)}$:
\begin{enumerate}
\item [i.] Each rectangle of $\W^{(\omega)}$, $\mathbb{Y}^{(\omega)}$ and $\mathbb{Z}^{(\omega)}$ is the product of basic intervals of $\mathcal{C}^{(\omega)}$ and $\mathcal{C}'$ (where $\mathcal{C}'$ is the image of $\X'$ under the coding map), each of these possibly belonging to different steps in the construction of $\mathcal{C}^{(\omega)}$ and $\mathcal{C}'$, respectively.
\item [ii.] An easy calculation using \eqref{control1} and \eqref{control} shows that the size of all rectangles in $\W^{(\omega)}_n$, $\Y^{(\omega)}_n$ and $\Z^{(\omega)}_n$ is, respectively,
\begin{align*}
a_{\omega_1}\dots a_{\omega_{rn}} & \times a_{\omega_1}\dots a_{\omega_{rn}} e^{\mathbf{R}^n(\omega,0)},\\
a_{\omega_1}\dots a_{\omega_{rn}} &\times a_{\omega_1}\dots a_{\omega_{rn}} e^{\mathbf{R}^n(\omega,0)-2\beta},\\
a_{\omega_1}\dots a_{\omega_{rn}} &\times a_{\omega_1}\dots a_{\omega_{rn}} e^{\mathbf{R}^n(\omega,0)+2\beta}.
\end{align*}
In particular, the eccentricity of rectangles in each family, i.e. their height-width ratio, always stays bounded in between $e^{-3\beta}$ and $e^{3\beta}$.

\item [iii.] Under the convention $\W^{(\omega)}_{0} := \{[0,1]^2\}$, for $n \in \N_0$ the rectangles in $\W^{(\omega)}_{n+1}$ are obtained from those in $\W^{(\omega)}_{n}$ by advancing $r$ steps further in the construction of $\mathcal{C}^{(\omega)}$, and advancing either one step further in the construction of $\mathcal{C}'$ if the resulting rectangle has eccentricity in between $e^{-\beta}$ and $e^{\beta}$, or $2l+1$ steps further in the construction otherwise. By \eqref{control1}, the first option increases  the eccentricity of the resulting rectangle by a factor of $e^{\mathbf{R}^{n+1}(\omega,0)-\mathbf{R}^{n}(\omega,0)}$ with respect to the one of its predecessor in $\W^{(\omega)}_n$, whereas the second option has the effect of bringing the eccentricity of the resulting rectangle back to a value between $e^{-\beta}$ and $e^{\beta}$.

\item [iv.] The rectangles in $\mathbb{Y}^{(\omega)}_{n}$ are obtained from those in $\W^{(\omega)}_{n}$ by advancing $2l$ steps further in the construction of $\mathcal{C}'$ while keeping the same basic intervals
in the construction of $\mathcal{C}^{(\omega)}$. This yields rectangles with greater width than height.
\item [v.] The rectangles in $\mathbb{Z}^{(\omega)}_{n}$ are obtained from those in $\W^{(\omega)}_{n}$ by going $2l$ steps backwards in the construction of $\mathcal{C}'$ (notice that this is possible since $n \geq 3l$) while keeping the same basic intervals in the construction of $\mathcal{C}^{(\omega)}$. This yields rectangles with greater height than width.
\item [vi.] For each $n \in \N$, the rectangles in $\W^{(\omega)}_n$ cover the product set $\mathcal{C}^{(\omega)} \times \mathcal{C}'$ (and the symbolic rectangles are disjoint, although their geometric projections may overlap). The same statement holds for $\mathbb{Y}^{(\omega)}_n$ and $\mathbb{Z}^{(\omega)}_n$.
\end{enumerate}

From the above considerations it is easy to see that for $(u,v) \in \W^{(\omega)}_n$ we can decompose $h^{(\omega)}_{u,v}$ as
\begin{equation}\label{eq:h}
h^{(\omega)}_{u,v}(x,y) = a_{\omega_1} \dots  a_{\omega_{rn}} \big(x,e^{\mathbf{R}^n(\omega,0)}y\big) + (t^{(\omega)}_u,t_v)
\end{equation} for certain translations $t^{(\omega)}_u, t_v \in [0,1]$. Moreover, if $H^{(\omega)}_{u,v}$ denotes the inverse of $h^{(\omega)}_{u,v}$, then from \eqref{eq:h} we obtain
\[
H^{(\omega)}_{u,v}(x,y) = \frac{1}{a_{\omega_1} \cdots  a_{\omega_{rn}}} \big(x,e^{-\mathbf{R}^n(\omega,0)}y\big) - \frac{1}{a_{\omega_1} \cdots  a_{\omega_{rn}}} \big(t^{(\omega)}_u, e^{-\mathbf{R}^n(\omega,0)}t_v\big).
\]
Obviously, similar decompositions hold for $h^{(\omega)}_{u,v}$ and $H^{(\omega)}_{u,v}$ whenever $(u,v) \in \mathbb{Y}^{(\omega)}_n$ or $(u,v) \in \mathbb{Z}^{(\omega)}_n$.

We note that the family $(\oeta^{(\omega)})_{\omega \in Y}$ satisfies the following conditions, closely related to (a)-(c) above.
\begin{enumerate}
\item [(a')] Each $\oeta^{(\omega)}$ is supported on $\X_\infty^{(\om)}\times \X'_\infty$.
\item [(b')] The mapping $\omega \mapsto \overline{\eta}^{(\omega)}$ is continuous.
\item [(c')] There exists $K'' > 0$ such that
\begin{equation}\label{eq:measure-sub-product-conv}
\oeta^{(\omega)}\left([u v]_\om\times [u' v']\right)  \leq K'' \,\oeta^{(\omega)}([u]_\om\times [u']) \, \oeta^{\left(\mathbf{T}^n\omega\right)}([v]_{\mathbf{T}^n\om}\times [v']),
\end{equation}
for all $\om\in Y$, all $(u,u')\in \W_n^\pom\cup \Y_n^\pom\cup \Z_n^\pom$, and all $(v,v')\in \X_{m}^{(\bt^n\om)}\times \X'_{m'}$.

\end{enumerate}

As a matter of fact, we will prove the result for projections of families of measures $\oeta^{\pom}$ satisfying these conditions (i.e. it will not matter that $\oeta^\pom$ is a product measure for each $\om$).

\subsection{A submultiplicative  cocycle}\label{cocycle}

For each $n \in \N$ we define $L^{(\omega)}_n$ as the unique nonnegative integer such that
\begin{equation}\label{eq:defln-conv}
2^{-L^{(\omega)}_n} \leq a_{\omega_1} \cdots   a_{\omega_{rn}} < 2^{1 -L^{(\omega)}_n},
\end{equation}
and, as in the proof of Theorem \ref{thm:main-projections}, consider the nested families of intervals $\mathfrak{D}_n^{(\omega)}$ given by
\[
\mathfrak{D}^{(\omega)}_n=\{ [2^{-L^{(\omega)}_n}j,2^{-L^{(\omega)}_n}(j+1)) : j \in \mathbb{Z} \}.
\]
With this, for $q > 1$ we define the functions $\tau_{q,n}: Y \times S^1_\beta \rightarrow \R$
\begin{equation}\label{eq:taudefi-conv}
\tau_{q,n}(\omega,s) : = \sum_{I \in \mathfrak{D}^{(\omega)}_{n}} \left(\eta^{(\omega)}_s (I)\right)^q.
\end{equation}

Similarly to the proof of Theorem \ref{thm:main-projections}, we will show that $\tau_{q,n}$ is a submultiplicative cocycle (up to a multiplicative constant), and then we will construct a ``nicer'' cocylce $\overline{\tau}_{q,n}$ which is comparable to $\tau_{q,n}$. Unlike the situation in Theorem \ref{thm:main-projections}, the functions $\overline{\tau}_{q,n}$ will \emph{not} be continuous, but will nevertheless be approximable by continuous functions in a suitable way. Since $q$ will remain fixed, we drop it from the notation.

Hence, the first step is to show that there exists $K_1 > 1$ such that for any $n,m \in \N$ and $(\omega,s) \in Y \times S^1_\beta$, one has
\begin{equation}\label{eq:mcocycle-conv}
\tau_{n+m}(\omega,s) \leq K_1 \,\tau_n(\omega,s) \,\tau_{m}(\mathbf{S}^n(\omega,s)).
\end{equation}
To see this, let us fix $n,m \in \N$, $(\omega,s) \in Y \times S^1_\beta$, and proceed to show \eqref{eq:mcocycle-conv}. We will consider three separate cases, depending on whether $-\beta \leq \mathbf{R}^n(\omega,0) + s < \beta$, $\mathbf{R}^n(\omega,0) + s \geq \beta$ or $\mathbf{R}^n(\omega,0) + s < -\beta$. In the first case one has $\mathbf{R}^n(\omega,s)=\mathbf{R}^n(\omega,0) + s$ whereas in the second one has $\mathbf{R}^n(\omega,s)=\mathbf{R}^n(\omega,0)+s - 2\beta$ and in the third $\mathbf{R}^n(\omega,s)=\mathbf{R}^n(\omega,0)+s + 2\beta$ holds instead. For the proof of the first case we will only use the family $\W^{(\omega)}$ and replace it with the family $\mathbb{Y}^{(\omega)}$ for the proof of the second case and with $\mathbb{Z}^{(\omega)}$ for the proof of the third case. Except for this difference, the proof of all three cases is completely analogous so we will only comment on the first case only.

The proof is a minor variant of the proof of \eqref{eq:mcocycle}. Given $J \in \mathfrak{D}^{(\omega)}_n$ let us define
\[
\W_{J}^{(\omega)}(s):=\{ (u,v) \in \W^{(\omega)}_n : ([u]_\om  \times [v]) \cap \opi_s^{-1}(J) \neq \emptyset \}
\]
and consider the interval $\widehat{J}$ which has the same center as $J$ but of length $|\widehat{J}|= 65 e^{2\beta}|J|$ instead. The constant is chosen to ensure that $\opi_s([u]_\om\times [v]) \subset \widehat{J}$ for every $(u,v) \in \W_{J}^{(\omega)}(s)$.

If $I \in \mathfrak{D}_{n+m}^{(\omega)}$ is such that $I \subset J$, then
\[
 \eta^{(\omega)}_{s}(I) \leq K''  \sum_{(u,v) \in \W_{J}^{(\omega)}(s)} \oeta^{(\omega)}([u]_\om\times [v]) \; \eta^{(\mathbf{T}^{rn}\om)}(H^\pom_{u,v}\Pi_s^{-1}(I))
\]
This can be established in a very similar manner to \eqref{eq1}; we omit the details. If we continue to argue as in the proof of Theorem \ref{thm:main-projections}, we further obtain

\begin{align}
\sum_{I \in \mathfrak{D}^{(\omega)}_{n+m} \atop I \subset J}  \left(\eta^{(\omega)}_s(I)\right)^q &\leq (K'')^q \left(\eta^{(\omega)}_s(\widehat{J})\right)^{q-1}  \times\nonumber\\
&\times \sum_{(u,v) \in \W^{(\omega)}_{J}(s)} \eta^{(\omega)}(Q^{(\omega)}_{u,v})
\sum_{I \in \mathfrak{D}^{(\omega)}_{n+m} \atop I \subset J}
 \left(\eta^{(\mathbf{T}^{rn}(\omega))} \big( H^{(\omega)}_{u,v}\Pi_s^{-1} (I)\big)\right)^q,\label{eq:submult-interm-step-conv}
\end{align}
recall \eqref{eq:submult-interm-step} and \eqref{eq:exchanged-sum}. Now, using \eqref{eq:h} it is not hard to see that for any such interval $I$ and $(u,v) \in \W^{(\omega)}_J$ one has
\begin{equation}\label{eq:expression-inverse-conv}
H^{(\omega)}_{u,v}\Pi_s^{-1} (I) = \left(\Pi_s \circ h^{(\omega)}_{(u,v)}\right)^{-1}(I) = \Pi_t^{-1} \left(\frac{1}{a_{\omega_1} \cdots a_{\omega_{rn}}} \cdot (I - \Pi_s(t^{(\omega)}_u, t'_v))\right),
\end{equation}
where $t:= \mathbf{R}^n(\omega,s)$ (in fact, we get \eqref{eq:expression-inverse-conv} for $t=\mathbf{R}^n(\omega,0)+s$ which, in this case, coincides with $\mathbf{R}^n(\omega,s)$; this is the point where it is important to use the appropriate family of rectangles). Furthermore, the families $\mathfrak{D}^{(\omega)}_{n+m}$ and
\[
\left\{ \frac{1}{a_{\omega_1} \cdots a_{\omega_{rn}}} (I - \Pi_s( d^{(\omega)}_u,d_v)) : I \in \mathfrak{D}^{(\omega)}_{n+m}\right\}
\]
can be seen to be  $6$-equivalent, so that by Lemma \ref{lem:equivalent-families} and \eqref{eq:expression-inverse-conv} we obtain
\begin{equation}\label{eq:sumareversa-conv}
\sum_{I \in \mathfrak{D}^{(\omega)}_{n+m} \atop I \subset J} \left(\eta^{(\mathbf{T}^{rn}(\omega))} \big( H^{(\omega)}_{u,v} \Pi_s^{-1} (I) \big)\right)^q \leq 6^{q-1} \tau_m(\mathbf{T}^{rn}(\omega),t) = 6^{q-1} \tau_m( \mathbf{S}^n(\omega,s)).
\end{equation}
Combining this with \eqref{eq:submult-interm-step-conv}, and reasoning exactly as in the end of Section  \ref{subsec:mult-cocycle}, we finally deduce that the cocycle relation \eqref{eq:mcocycle-conv} holds for some $K_1>0$ depending on $q$.

\subsection{An upper $C$-approximable analogue of $\tau_{n}$} In order to apply Corollary \ref{cor:uniform-in-fiber}, we need a $C$-approximable cocycle (recall Definition \ref{def:C-approx}). Unlike the situation in Theorem \ref{thm:main-projections}, there is now an inherent discontinuity at the end point of the interval $[-\beta,\beta)$; note that although the identification of the extreme points is required for applying ergodic-theoretic tools, as far as the geometric definition of $\tau_n$ is concerned, there is no such identification. This issue arises already in \cite[p. 107]{NPS12}, where (in the course of proving what effectively is a special case of Theorem \ref{thm:mainconvolutions}) it is incorrectly claimed that the functions $\phi_n$ (corresponding to our $\tau_n$) are continuous. In fact, there is continuity up to the endpoint of the interval. Fortunately, this turns out to be a minor issue: because the discontinuity set is small, the new cocycle is still upper $C$-approximable.

We proceed to the details. Firstly, in close analogy to Section \ref{subsec:continuous-analog}, we define
\[
\psi_n^{(\omega)}(x,y) = \psi ( 2^{L_n^{(\omega)}}(x - y) ).
\]
where  $\psi \in C^\infty_0(\R)$ is supported on $[-2,2)$, $0 \leq \psi \leq 1$ and $\psi|_{[-1,1)} \equiv 1$, and
\[
\overline{\tau}_n(\omega,s):= \int_\R \left( \int_\R \psi_n^{(\omega)}(x,y) d\eta_s^{(\omega)}(x) \right)^{q-1} d\eta_s^{(\omega)}(y).
\]
Then one can check, just as in Section \ref{subsec:continuous-analog}, that there is a constant $M\ge 1$ such that
\begin{equation} \label{eq:cocycles-equivalent-conv}
M^{-1} \tau_n(\om,s)\le \overline{\tau}_n(\om,s) \le M\tau_n(\om,s) \quad\text{for all }n\in\N, (\om,s)\in Y\times S^1_\beta,
\end{equation}
and $\overline{\tau}_n$ is continuous on $Y\times (-\beta,\beta)$. Since we clearly have $0 \leq \overline{\tau}_n \leq 1$, the fact that $\overline{\tau}_n$ is $C$-approximable is now a consequence of the following lemma.

\begin{lemma}\label{lem:C-approx} Given a measure $\mu$ on $Y$, every bounded $f: Y \times S^1_\beta \rightarrow \R$ which is continuous on $Y \times (-\beta,\beta)$ is also upper $C$-approximable on $(Y\times S^1_\beta,\mu\times\mathcal{L}_\beta)$.
\end{lemma}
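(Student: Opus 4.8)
The plan is to unwind Definition~\ref{def:C-approx}: fixing $\eps>0$, we must exhibit a continuous $\phi_\eps\colon Y\times S^1_\beta\to\R$ with $f\le\phi_\eps$ pointwise and $\int(\phi_\eps-f)\,d(\mu\times\mathcal{L}_\beta)<\eps$. By hypothesis $f$ is continuous on $Y\times(-\beta,\beta)$, so the only obstruction to continuity of $f$ on $Y\times S^1_\beta$ sits over the single identified endpoint $p:=\{-\beta\}=\{\beta\}$ of $S^1_\beta$; since $Y\times\{p\}$ is $(\mu\times\mathcal{L}_\beta)$-null, the idea is to leave $f$ untouched away from $p$ and to overwrite it on a tiny neighbourhood of $p$ by a continuous function that trivially dominates it, paying only an error supported on a set of arbitrarily small measure.

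To carry this out I would fix a small $\delta>0$, let $V_\delta\subset S^1_\beta$ be the open arc of radius $\delta$ about $p$, and set $A_\delta:=S^1_\beta\setminus V_\delta$, a closed arc contained in $(-\beta,\beta)$. Then $Y\times A_\delta$ is compact and $f|_{Y\times A_\delta}$ is continuous, so, since $Y\times S^1_\beta$ is a compact metric (hence normal) space, Tietze's extension theorem yields a continuous $\tilde f\colon Y\times S^1_\beta\to\R$ with $\tilde f=f$ on $Y\times A_\delta$; composing with the truncation $t\mapsto\max\{\inf f,\min\{\sup f,t\}\}$ we may moreover assume $\inf f\le\tilde f\le C$ everywhere, where $C:=\sup f<\infty$ (here we use that $f$ is bounded). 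Next, by Urysohn's lemma pick a continuous $\rho\colon S^1_\beta\to[0,1]$ with $\rho\equiv1$ on the closed arc of radius $\delta$ about $p$ and $\rho\equiv0$ on $A_{2\delta}$, and define
\[
\phi_\eps:=\tilde f+\rho\,(C-\tilde f),
\]
which is continuous.

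The verification is then routine. Since $C-\tilde f\ge0$ and $\rho\ge0$ we get $\phi_\eps\ge\tilde f$, hence $\phi_\eps\ge f$ on $Y\times A_\delta$ (where $\tilde f=f$) and $\phi_\eps=C\ge f$ on $Y\times V_\delta$ (where $\rho\equiv1$), so $\phi_\eps\ge f$ everywhere. Moreover $\phi_\eps-f$ vanishes on $Y\times A_{2\delta}$ (there $\rho=0$ and $\tilde f=f$) and is bounded above by $|\tilde f-f|+|C-\tilde f|\le 2(C-\inf f)$ in general, so, using that $\mathcal{L}_\beta$ is normalized and $\mathcal{L}_\beta(V_{2\delta})=2\delta/\beta$,
\[
\int_{Y\times S^1_\beta}(\phi_\eps-f)\,d(\mu\times\mathcal{L}_\beta)\le 2(C-\inf f)\,\mathcal{L}_\beta(V_{2\delta})=\frac{4(C-\inf f)\,\delta}{\beta},
\]
which is $<\eps$ as soon as $\delta$ is chosen small enough (and if $C=\inf f$ then $f$ is constant, so one simply takes $\phi_\eps=f$). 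This proves the lemma.

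There is no deep difficulty here; the one point that needs care is the gluing, which is why the construction uses two nested scales $\delta<2\delta$ together with the truncated Tietze extension — this is precisely what guarantees that $\phi_\eps$ is at once continuous across $p$ and everywhere $\ge f$, rather than merely one of the two. One should also keep track of the normalization of $\mathcal{L}_\beta$ so that the measure of the transition arc $V_{2\delta}$ enters the estimate with the correct constant.
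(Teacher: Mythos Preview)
Your proof is correct and takes essentially the same approach as the paper: blend $f$ with the constant $\sup f$ via a cutoff supported near the endpoint $p$, paying an error proportional to the $\mathcal{L}_\beta$-measure of a small arc. The paper's version is slightly more direct in that it avoids Tietze's theorem --- since the cutoff $g$ vanishes identically on a neighborhood of $p$ and $f$ is bounded, the product $fg$ already extends continuously by zero across $p$, so one may simply take $\phi_\eps=fg+\|f\|_\infty(1-g)$ without first extending $f$.
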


\begin{proof}
For $\delta \in (0,\beta)$ let $g:S^1_\beta \rightarrow [0,1]$ be a continuous function such that $g|_{[\beta-\frac{1}{2}\delta,-\beta+\frac{1}{2}\delta]} \equiv 0$ and $g|_{[-\beta+\delta,\beta-\delta]} \equiv 1$, where we identify $-\beta=\beta$.

The function $M_\delta : Y \times S^1_\beta \rightarrow \R$ defined by the formula
$$
M_\delta(\omega,s) = f(\omega,s)g(s) + \|f\|_\infty (1-g(s))
$$
is continuous on $Y \times S^1_\beta$ and also satisfies $f \leq M_\delta \leq 2 \|f\|_\infty$. Furthermore, since $M_\delta$ and $f$ agree on $Y\times [-\beta+\delta,\beta-\delta]$, we have that
$$
\int_{Y \times S^1_\beta}  (M_\delta - f) d( \mu \times \mathcal{L}_{\beta}) \leq 2\|f\|_\infty \mathcal{L}_\beta( [\beta-\delta,-\beta+\delta] )=  \frac{ 2\delta\|f\|_\infty}{\beta}
$$
which shows that $f$ is upper $C$-approximable.
\end{proof}

From this, it follows that if for each $n \in \N$ we define $\phi_n : Y \times S^1_\beta \rightarrow \R$ by the formula
\[
\phi_n := \log ( K_1 \overline{\tau}_n )
\]
where $K_1$ is as in \eqref{eq:mcocycle-conv}, then the sequence $(\phi_n)_{n \in \N}$ is a subadditive cocycle on $Y \times S^1_\beta$. Notice that each $\phi_n$ is well defined since $\tau_n$, and hence also $\overline{\tau}_n$, are strictly positive. Furthermore, by Lemma \ref{lem:C-approx} we get that $\phi_n$ will also be upper $C$-approximable provided that it remains bounded. This fact will be a consequence of \eqref{eq:cocycles-equivalent-conv} and the following lemma.

\begin{lemma}\label{lema1} For each $n \in \N$ we have
$$
0 < \inf_{(\omega,s) \in Y \times S^1_\beta} \tau_n(\omega,s) \leq \sup_{(\omega,s) \in Y \times S^1_\beta} \tau_n(\omega,s) < +\infty.
$$
\end{lemma}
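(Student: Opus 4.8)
The plan is to prove the two inequalities separately; the upper bound is immediate, and the lower bound rests on the observation that for a fixed $n$ only boundedly many of the summands defining $\tau_n$ can be nonzero, uniformly in $(\om,s)$.

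For the upper bound I would use only that, by assumption (a$'$), $\eta^\pom_s=\opi_s\oeta^\pom$ is a Borel probability measure on $\R$ for every $(\om,s)\in Y\times S^1_\beta$; then $\eta^\pom_s(I)\le 1$ for each dyadic interval $I$, and since $q>1$ we have $\eta^\pom_s(I)^q\le\eta^\pom_s(I)$, so summing \eqref{eq:taudefi-conv} over $I\in\mathfrak{D}^\pom_n$ gives $\tau_n(\om,s)\le\sum_{I\in\mathfrak{D}^\pom_n}\eta^\pom_s(I)=1$. Hence $\sup_{(\om,s)}\tau_n(\om,s)\le 1<\infty$ (in fact uniformly in $n$ as well).

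For the lower bound, I would first note that $\Delta_\om$ maps $\X_\infty^\pom\times\X'_\infty$ into $[0,1]^2$ and $\Pi_s(x,y)=x+e^sy$ with $|s|\le\beta$, so $\eta^\pom_s$ is supported in the fixed bounded interval $[0,1+e^\beta)$, independently of $\om$ and $s$. The \emph{crux} is then that, for our fixed $n$, the integer $L^\pom_n$ of \eqref{eq:defln-conv} depends on $\om$ only through the prefix $\om_1\cdots\om_{rn}$ (and satisfies $a_{\min}^{rn}\le a_{\omega_1}\cdots a_{\omega_{rn}}\le a_{\max}^{rn}$), hence takes only finitely many values; let $L^*=\max_{\om\in Y}L^\pom_n<\infty$. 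Since the intervals of $\mathfrak{D}^\pom_n$ have common length $2^{-L^\pom_n}\ge 2^{-L^*}$, at most $N^*:=\lceil(1+e^\beta)2^{L^*}\rceil+1$ of them meet the support of $\eta^\pom_s$, and $N^*$ does not depend on $(\om,s)$. Thus at most $N^*$ of the summands $\eta^\pom_s(I)^q$ of $\tau_n(\om,s)$ are nonzero while they add up, in the first power, to $1$; so some interval $I_0$ has $\eta^\pom_s(I_0)\ge 1/N^*$, whence $\tau_n(\om,s)\ge(N^*)^{-q}>0$, uniformly in $(\om,s)$. (Equivalently, one may invoke the power-mean inequality $\sum_{i=1}^{N^*}t_i^q\ge(N^*)^{1-q}$ valid for nonnegative $t_i$ with $\sum t_i=1$.)

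I do not anticipate any real obstacle; the only point needing a moment's care is that $L^\pom_n$ is bounded uniformly in $\om$ for fixed $n$, which is exactly what upgrades ``finitely many nonzero terms for each $\om$'' to ``a uniformly bounded number of nonzero terms'' and so makes the lower bound uniform. The lemma will then be combined with \eqref{eq:cocycles-equivalent-conv} and Lemma \ref{lem:C-approx} to conclude that $\phi_n=\log(K_1\overline{\tau}_n)$ is bounded, hence upper $C$-approximable.
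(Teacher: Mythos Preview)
Your proof is correct and follows essentially the same approach as the paper's: the upper bound uses $\eta^\pom_s(I)^q\le\eta^\pom_s(I)$ to get $\tau_n\le 1$, and the lower bound combines the fixed compact support $[0,1+e^\beta]$ with a uniform (in $\om$) bound on $L^\pom_n$ to find a constant $c_{n,\beta}$ bounding the number of nonzero terms, then pigeonholes to obtain $\tau_n\ge c_{n,\beta}^{-q}$. Your write-up is, if anything, slightly more explicit than the paper's about why $L^\pom_n$ is uniformly bounded.
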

\begin{proof} Notice that, since $\eta_s^{(\omega)}$ is a probability measure for each $(\omega,s)$, we have
\[
\tau_n(\omega,s) = \sum_{I \in \mathfrak{D}^{(\omega)}_n} \left(\eta^{(\omega)}_s(I)\right)^q \leq \sum_{I \in \mathfrak{D}^{(\omega)}_n} \eta^{(\omega)}_s(I) = \eta_s^{(\omega)}(\R)=1.
\]
To establish the other inequality we notice that for any $(\omega,s) \in Y \times S^1_\beta$ we have
\[
\text{Supp}(\eta_s^{(\omega)}) \subseteq \Pi_s( [0,1] \times [0,1] ) \subseteq [0,1+e^{\beta}]
\]
so that for each $n \in \N$ there exist at most $c_{n,\beta}$ intervals $I \in \mathfrak{D}^{(\omega)}_n$ satisfying $\eta_s^{(\omega)}(I) \neq 0$, where $c_{n,\beta} \in \N$ is a constant which, since $\inf_{i=1,\dots,N} a_i > 0$, can be chosen independently of $\omega$. Thus, for each $(\omega,s)$ there exists at least one $I \in \mathfrak{D}^{(\omega)}_n$ such that $\eta_s^{(\omega)}(I) \geq \frac{1}{c_{n,\beta}}$, which implies that
\[
\frac{1}{c^q_{n,\beta}} \leq \inf_{(\omega,s) \in \{1,\dots,N\}^\N \times S^1_\beta} \tau_n(\omega,s).
\]
\end{proof}

\subsection{Conclusion of the proof}

The remaining of the proof of Theorem \ref{thm:mainconvolutions} follows exactly the same lines as the proof of Theorem \ref{thm:main-projections}. In particular, \eqref{eq:cdimensionq} holds in the current setting. Details are left to the interested reader.

\section{Examples and applications}
\label{sec:examples}

\subsection{The deterministic case}

When there is just $N=1$ rule, we obtain the following immediate consequence of Theorem \ref{thm:main-projections}

\begin{corollary} \label{cor:projections-deterministic}
Let $\{f_j(x)= \lambda R_{\alpha} x + t_j\}_{j=1}^k$, where $\lambda\in (0,1)$, $R_\alpha$ is rotation by $\alpha\in [0,2\pi)$ and $t_j\in\R^2$ are translations. Let $\oeta$ be a measure on $\X_\infty:=\{1,\ldots,k\}^\N$ such that
\[
\oeta([uv]) \le K\, \oeta[u]\,\oeta[v]
\]
for some $K>1$ and all finite words $u\in \X_m, v\in\X_n$, and let $\eta$ be the projection of $\oeta$ under the coding map.

If $\alpha/\pi$ is irrational, then for all $q\in (1,2]$ and all $v\in S^1$ it holds that
\[
D_q(\Pi_v\eta) = \min(D_q\eta,1),
\]
and moreover the convergence of $-\frac{\log\mathcal{C}^q_{\Pi_v\eta}(n)}{n(q-1)}$ to $\min(D_q\eta,1)$ is uniform over $v\in S^1$.
\end{corollary}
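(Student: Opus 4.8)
The plan is to obtain this statement as the $N=1$ specialization of Theorem \ref{thm:main-projections}, so the work is simply to check that all of its hypotheses reduce to what is assumed here. First I would note that when $N=1$ the base $Y=\{1\}^\N$ is a single point, which I call $\mathbf{1}$; hence the unique Borel probability measure on $Y$ is $\mu=\delta_{\mathbf{1}}$, which is automatically $\mathbf{T}$-invariant and ergodic, and the lone point $\mathbf{1}$ is $\mu$-generic since $\tfrac1n\sum_{i=1}^n\delta_{\mathbf{T}^i\mathbf{1}}=\delta_{\mathbf{1}}=\mu$. The family of measures required by the theorem then consists of the single measure $\oeta^{(\mathbf{1})}=\oeta$, and conditions (a), (b) are immediate: the support condition (a) is part of the hypothesis, and continuity (b) is vacuous on a one-point space. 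Condition (c) --- with $\mathbf{T}^n\mathbf{1}=\mathbf{1}$ --- is exactly the submultiplicativity inequality $\oeta([uv])\le K\,\oeta[u]\,\oeta[v]$ assumed in the statement.

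The only hypothesis that is not purely formal is the ergodicity of $\mu\times\mathcal{L}$ for the skew-product $\mathbf{S}$. Since $Y$ is a point and the single rule has rotation angle $\alpha$, the map $\mathbf{S}(\mathbf{1},v)=(\mathbf{1},e^{-i\alpha}v)$ is, on the fibre, nothing but the rotation of $S^1$ by $-\alpha$, and $\mu\times\mathcal{L}$ corresponds to normalized Haar measure on $S^1$; thus the ergodicity in question is literally that of an irrational circle rotation. By the classical Fourier/Weyl criterion this holds precisely when $\alpha/(2\pi)\notin\mathbb{Q}$, which --- as $\alpha/(2\pi)=\tfrac12(\alpha/\pi)$ --- is exactly the hypothesis that $\alpha/\pi$ is irrational. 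Hence all the hypotheses of Theorem \ref{thm:main-projections} are met.

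It then remains only to transcribe the conclusion. Theorem \ref{thm:main-projections} produces a number $D(q)$ with $D_q(\eta^{(\mathbf{1})})=D(q)$ --- so in particular $D_q\eta$ exists --- and $D_q(\Pi_v\eta)=\min(D(q),1)=\min(D_q\eta,1)$ for every $v\in S^1$, with the convergence of $-\log\mathcal{C}^q_{\Pi_v\eta}(n)/(n(q-1))$ to this value uniform over $v\in S^1$; since here $D_q(\eta^{(\mathbf{1})})=D(q)$ trivially holds at the (unique) $\mu$-generic point, the final clause of the theorem applies as well. I do not expect any genuine obstacle: the argument is essentially a hypothesis check, and its sole non-bookkeeping ingredient --- ergodicity of an irrational rotation --- is entirely classical.
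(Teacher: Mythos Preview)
Your proposal is correct and follows exactly the same approach as the paper: both observe that with $N=1$ the skew-product $\mathbf{S}$ degenerates to a circle rotation by $\alpha$, whose ergodicity under Lebesgue measure is classical when $\alpha/\pi$ is irrational, so that the result is a direct specialization of Theorem \ref{thm:main-projections}. You have simply fleshed out the hypothesis check in more detail than the paper's two-sentence proof.
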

\begin{proof}
The dynamical system $(Y\times S^1,\mathbf{S})$ degenerates to rotation by $\alpha$ on the circle, for which Lebesgue measure is certainly ergodic. This is then just a special case of Theorem \ref{thm:main-projections}.
\end{proof}

Measures $\oeta$ satisfying the assumptions include product (Bernoulli) measures on $\X_\infty$, as well as Gibbs measures for H\"{o}lder potentials and for almost-additive sequences of potentials. When $\oeta$ is Bernoulli, then $\eta$ is a self-similar measure on the corresponding self-similar set, so in particular we obtain existence and preservation of $L^q$ dimensions of projections of self-similar measures for homogeneous planar iterated function systems (for which the linear part contains an irrational rotation), regardless of overlaps. For Hausdorff dimension this is known to hold also for non-homogeneous systems \cite{HochmanShmerkin12, FalconerJin14}; below we will recover this as another consequence of Theorem \ref{thm:main-projections}.

In a similar way, we have the following consequence of (the proof of) Theorem \ref{thm:mainconvolutions}.

\begin{corollary} \label{cor:measure-on-product-deterministic}
For $(i,j)\in \{1,\ldots,k_1\}\times \{1,\ldots,k_2\}$, let
\[
f_{i,j}(x,y) = (ax + t_i, bx + u_j),
\]
where $0<a<b<1$ and $t_i,u_j\in\R$.

For $i=1,2$, let $\overline{\nu}_i$ be a measure on $\{1,\ldots,k_i\}^\N$ such that
\begin{equation} \label{eq:submult-determinist-product}
\overline{\nu}_i([uv]) \le K\, \overline{\nu}_i([u]) \, \overline{\nu}_i([v]),
\end{equation}
for any words $u\in \{1,\ldots,k_i\}^m, v\in \{1,\ldots,k_i\}^n$. Let $\nu_i$ be the projection of $\overline{\nu}_i$ under the respective coding map.

If $\log a/\log b$ is irrational, then for all $t>0$,
\[
D_q(\nu_1 * A_t\nu_2) = \min(D_q(\nu_1)+D_q(\nu_2),1),
\]
where $A_t x=tx$, and moreover
\[
-\frac{\log\mathcal{C}^q_{\nu_1 * A_t\nu_2}(n)}{(q-1)n} \to \min(D_q(\nu_1)+D_q(\nu_2),1)
\]
uniformly over compact subsets of  $(0,+\infty)$.
\end{corollary}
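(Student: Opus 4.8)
The plan is to obtain Corollary~\ref{cor:measure-on-product-deterministic} as a direct specialization of (the proof of) Theorem~\ref{thm:mainconvolutions}, combined with two elementary observations. First I would recast the data in the language of Section~2.3: the $x$-coordinate maps $\{x\mapsto ax+t_i\}_{i}$ form a single ($N=1$) rule with ratio $a_1=a$, the $y$-coordinate maps $\{y\mapsto by+u_j\}_{j}$ form the rule $\{g_j\}$ with ratio $b$, and the measures $\overline{\nu}_1,\overline{\nu}_2$ play the roles of the (here deterministic) family $\overline{\nu}^{(\omega)}\equiv\overline{\nu}_1$ and of $\overline{\vartheta}=\overline{\nu}_2$; then \eqref{eq:submult-determinist-product} is precisely conditions (a)--(c) (continuity being trivial since $Y$ is a single point) and \eqref{eq:sub-theta}. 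After an affine conjugacy I may assume the maps send $[0,1]^2$ into itself. Because $a<b$ I may take $r=1$ in \eqref{control1}; and since $\beta=-l\ln b\to\infty$ as $l\to\infty$, for any prescribed compact $C\subset(0,+\infty)$ I may choose $l\in\N$ satisfying \eqref{control} and large enough that $C\subseteq[e^{-\beta},e^\beta)$. Under these identifications $\nu^{(\omega)}\times\vartheta=\nu_1\times\nu_2$ and $\nu^{(\omega)}*A_t\vartheta=\nu_1*A_t\nu_2$.

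Next I would verify the ergodicity hypothesis of Theorem~\ref{thm:mainconvolutions}. With $N=1$ the base $Y$ is a point, $\mu$ the Dirac mass, and $\alpha(\omega)\equiv\alpha_1=\ln(b/a)$ a constant, so the skew-product $\mathbf{S}$ degenerates to the rotation by $\ln(b/a)$ on $S^1_\beta=\R/(2\beta\Z)$. The measure $\mathcal{L}_\beta$ is ergodic for this rotation iff $\ln(b/a)/2\beta\notin\mathbb{Q}$; substituting $\beta=-l\ln b$ this ratio equals $\tfrac{1}{2l}\!\left(\tfrac{\ln a}{\ln b}-1\right)$, which is irrational exactly when $\log a/\log b$ is irrational. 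Thus the hypothesis of the corollary is precisely what makes $\mu\times\mathcal{L}_\beta$ ergodic, and Theorem~\ref{thm:mainconvolutions} applies. Since the relevant full-$\mu$-measure set is all of $Y$, I obtain a constant $D(q)=D_q(\nu_1\times\nu_2)$ with $D_q(\nu_1*A_t\nu_2)=\min(D(q),1)$ for every $t\in[e^{-\beta},e^\beta)\supseteq C$, together with uniform convergence of $-\tfrac{\log\mathcal{C}^q_{\nu_1*A_t\nu_2}(n)}{(q-1)n}$ to that value on $[e^{-\beta},e^\beta)$, hence on $C$. As $C$ was arbitrary, this yields the pointwise statement for all $t>0$ and uniformity on compact subsets of $(0,+\infty)$; note that $D(q)$ does not depend on $l$, so varying $l$ with $C$ causes no inconsistency.

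The remaining step is to rewrite $D(q)=D_q(\nu_1\times\nu_2)$ as $D_q(\nu_1)+D_q(\nu_2)$. I would first note that each $D_q(\nu_i)$ exists: submultiplicativity of $\overline{\nu}_i$ makes $\log\mathcal{C}^q_{\nu_i}$ subadditive up to an additive constant along the natural sequence of scales (those $2^{-n}$-scales comparable to $a^n$, resp.\ $b^n$), so Fekete's lemma gives the limit along that subsequence, and the interpolation argument used in the proof of \eqref{eq:cdimensionq} — the monotonicity of $k\mapsto\mathcal{C}^q_{\nu_i}(k)$ together with the boundedness of the gaps between consecutive scales — upgrades this to the existence of $D_q(\nu_i)$. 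Then the elementary identity $\mathcal{C}^q_{\nu_1\times\nu_2}(n)=\mathcal{C}^q_{\nu_1}(n)\,\mathcal{C}^q_{\nu_2}(n)$, valid because each dyadic square of generation $n$ is a product of two dyadic intervals of generation $n$ and the product measure factors, gives $D_q(\nu_1\times\nu_2)=D_q(\nu_1)+D_q(\nu_2)$. Substituting into the displayed identity finishes the argument.

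I do not expect a genuine obstacle: this is essentially a dictionary exercise translating Theorem~\ref{thm:mainconvolutions} to the self-affine setting. The only points needing care are matching $0<a<b<1$ and $\log a/\log b\notin\mathbb{Q}$ to the choices of $r$, $l$ and to the ergodicity of $\mathbf{S}$; exploiting that $\beta$ can be made arbitrarily large (so that every compact set of slopes is eventually covered) while the limit value $D(q)$ stays fixed; and supplying the short Fekete-type argument for the existence of the individual $D_q(\nu_i)$, which is required even to make sense of the right-hand side of the asserted formula.
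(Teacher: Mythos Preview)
Your proposal is correct and follows essentially the same approach as the paper: specialize Theorem~\ref{thm:mainconvolutions} to $N=1$, take $r=1$ since $a<b$, use that $\beta$ may be chosen arbitrarily large, and observe that the skew-product degenerates to rotation by $\ln(b/a)$ on $S^1_\beta$, which is irrational precisely when $\log a/\log b\notin\mathbb{Q}$. You supply more detail than the paper (in particular the explicit verification that $D_q(\nu_1\times\nu_2)=D_q(\nu_1)+D_q(\nu_2)$ and the existence of each $D_q(\nu_i)$), which the paper leaves implicit.
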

\begin{proof}
Since there is no code space, the ergodicity assumption in Theorem \ref{thm:mainconvolutions} reduces to $\mathcal{L}_b$ being ergodic for the map $s\mapsto s+_{\beta} \ln(b/a)$ on $S^1_\beta$, where $\beta=\ln(b^{-\ell})$ and $\ell\in\N$ is arbitrarily large (recall \eqref{control1} and \eqref{control}; we take $r=1$ since $a<b$). Since $\log b/\log a$ is irrational, these systems are isomorphic to irrational rotations for any value of $\ell$, so the claim follows from (the proof of) Theorem \ref{thm:mainconvolutions}.
\end{proof}

This extends \cite[Theorem 1.1]{NPS12}, and most of the generalizations outlined in \cite[Section 5]{NPS12}. More precisely, we allow overlapping in the construction, our measures on the Cantor sets are more general (including Gibbs measures), and we obtain uniform convergence over compact sets of scalings $t$.

\subsection{Random self-similar measures}

Next, we go back to the setting of Theorem \ref{thm:main-projections} with $N$ different rules, but assume that the measures $\oeta^\pom$ have the following product structure. For each $i\in\{1,\ldots,N\}$, let $p_i=(p_{i,1},\ldots,p_{i,k_i})$ be a probability vector, and set
\begin{equation} \label{eq:eta-product}
 \oeta^\pom =  \prod_{n=1}^\infty p_{\omega_i}.
\end{equation}
It is immediate that properties (a)-(c) hold. (Recall Section \ref{subsec:themodel}). Let $\mu$ be an ergodic measure for $(Y,\bt)$, where as usual $Y=\{1,\ldots,N\}^\N$.

We want to obtain an explicit formula for the $L^q$ dimensions of the projections $\eta^\pom$; for this, we need to assume some separation assumption. For simplicity we assume the following very strong separation condition:
\begin{equation} \label{eq:strong-separaton}
\text{for each }i\in\{1,\ldots, N\}\text{, the disks } f_1^{(i)}(B),\ldots, f_{k_i}^{(i)}(B) \text{ are disjoint},
\end{equation}
where $B=B[0,R]$ is a ball that such that $f_j^{(i)}B\subset B$ for all $i,j$, recall Section \ref{subsec:themodel}. This could be relaxed to a random open set condition, but we do not pursue this. The following lemma is standard, but we include the proof for completeness.

\begin{lemma} \label{lem:value-Lqdim}
Let the family $\oeta^\pom$ be given by \eqref{eq:eta-product}, and suppose \eqref{eq:strong-separaton} holds. Then for every $q>1$, the $L^q$ dimension of $\eta^{(\omega)}=\Delta_\om\oeta^\pom$ exists and is constant on the set of $\mu$-generic points $\om$, and is given by the value
\begin{equation}\label{formuladimension}
D^q(\eta^\pom) = \dfrac{\int \log{(p_{\widetilde{\omega}_1,1}^q + \cdots + p_{\widetilde{\omega}_1, k_{\widetilde{\omega}_1}}^q)} \,d\mu(\widetilde{\omega})}{(q-1) \int \log (\lambda_{\widetilde{\omega}_1}) \,d\mu (\widetilde{\omega})}.
\end{equation}
\end{lemma}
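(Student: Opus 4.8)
The plan is to exploit the separation condition \eqref{eq:strong-separaton} to reduce the computation of $\mathcal{C}^q_{\eta^\pom}(L_n^\pom)$ to an explicit product over the first $n$ coordinates of $\om$, and then apply the ergodic theorem. First I would observe that under \eqref{eq:strong-separaton} the balls $B_u^\pom$, $u\in\X_n^\pom$, are pairwise disjoint (this follows by induction on $n$, using that the maps $f_j^{(i)}$ are injective and contract $B$ into itself, so disjointness at level $1$ propagates to all levels). Consequently $\eta^\pom(B_u^\pom)=\oeta^\pom([u]_\om)=p^{(\omega_1)}_{u_1}\cdots p^{(\omega_n)}_{u_n}$ exactly, and
\[
\sum_{u\in\X_n^\pom}\left(\eta^\pom(B_u^\pom)\right)^q
= \prod_{j=1}^n\Big(\sum_{l=1}^{k_{\omega_j}}(p^{(\omega_j)}_l)^q\Big).
\]
The next step is to compare this sum over the sets $B_u^\pom$ with $\mathcal{C}^q_{\eta^\pom}(L_n^\pom)$, the sum over dyadic cubes of side $\approx \prod_{j\le n}\lambda_{\omega_j}$. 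Since each $B_u^\pom$ has diameter comparable (up to the fixed factor $2R$) to $2^{-L_n^\pom}$, each dyadic cube in $\mathfrak{Q}_{L_n^\pom}$ meets a bounded number $C=C(R)$ of the disjoint balls $B_u^\pom$ and vice versa; Lemma \ref{lem:equivalent-families} (applied with $\rho=\eta^\pom$ and the two $C$-equivalent families $\{B_u^\pom\}_u$ and $\mathfrak{Q}_{L_n^\pom}$ restricted to the support) then gives
\[
C^{1-q}\,\prod_{j=1}^n\Big(\sum_l (p^{(\omega_j)}_l)^q\Big)\le \mathcal{C}^q_{\eta^\pom}(L_n^\pom)\le C^{q-1}\,\prod_{j=1}^n\Big(\sum_l (p^{(\omega_j)}_l)^q\Big).
\]

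Taking logarithms and dividing by $-(q-1)L_n^\pom$, the bounded multiplicative constant disappears in the limit, so
\[
\lim_{n\to\infty}\frac{\log\mathcal{C}^q_{\eta^\pom}(L_n^\pom)}{-(q-1)L_n^\pom}
= \lim_{n\to\infty}\frac{\sum_{j=1}^n\log\big(\sum_l (p^{(\omega_j)}_l)^q\big)}{-(q-1)L_n^\pom}
= \frac{1}{q-1}\cdot\frac{\lim_n \tfrac1n\sum_{j=1}^n \log\big(\sum_l(p^{(\omega_j)}_l)^q\big)}{\lim_n \tfrac{-L_n^\pom}{n}},
\]
provided $\om$ is $\mu$-generic. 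For such $\om$, Lemma \ref{lem:limite} identifies the denominator as $-\int\log\lambda_{\widetilde\omega_1}\,d\mu$, while the numerator converges to $\int\log\big(\sum_l (p^{(\widetilde\omega_1)}_l)^q\big)\,d\mu(\widetilde\omega)$ since $\widetilde\omega\mapsto\log\big(\sum_l (p^{(\widetilde\omega_1)}_l)^q\big)$ is continuous on $Y$ (it depends only on the first coordinate) and $\om$ is generic. This yields formula \eqref{formuladimension}. Finally, to pass from the subsequence $k=L_n^\pom$ to the full limit defining $D^q(\eta^\pom)$, I would invoke exactly the argument already given in Section \ref{subsec:mult-cocycle} (``The proof of Equation \eqref{eq:cdimensionq}''): the sequence $k\mapsto\mathcal{C}^q_{\eta^\pom}(k)$ is non-increasing for $q>1$, and $L_{n+1}^\pom-L_n^\pom$ is uniformly bounded, so the $\liminf$ and $\limsup$ along $k$ agree with those along $L_n^\pom$; hence $D^q(\eta^\pom)$ exists and equals the stated value.

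The only mild subtlety — and the step I would be most careful with — is the comparison between the family $\{B_u^\pom : u\in\X_n^\pom\}$ (disjoint balls of \emph{comparable} but not equal radii, since different rules have different contraction ratios $\lambda_i$) and the genuinely dyadic family $\mathfrak{Q}_{L_n^\pom}$. One must check that the number of dyadic cubes of side $2^{-L_n^\pom}$ meeting a fixed ball $B_u^\pom$, and the number of such balls meeting a fixed cube, are both bounded by a constant depending only on $R$ and the $\lambda_i$ (this uses $2^{-L_n^\pom}\le \prod_{j\le n}\lambda_{\omega_j}<2^{1-L_n^\pom}$ and $\mathrm{diam}(B_u^\pom)=2R\prod_{j\le n}\lambda_{\omega_j}$, so the ratio of the two scales is bounded above and below uniformly). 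Once this $M$-equivalence with uniform $M$ is in hand, everything else is a routine application of Lemma \ref{lem:equivalent-families}, Lemma \ref{lem:limite}, and the ergodic theorem as above.
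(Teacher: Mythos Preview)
Your proof is correct and follows essentially the same approach as the paper's: define the sum over the balls $B_u^\pom$, use strong separation to compute it exactly as the product $\prod_{j=1}^n\sum_l (p^{(\omega_j)}_l)^q$, compare with the dyadic sum via $M$-equivalence and Lemma~\ref{lem:equivalent-families}, and finish with genericity and Lemma~\ref{lem:limite}. One small remark: your ``mild subtlety'' is even milder than you think, since for fixed $\om$ and $n$ the balls $B_u^\pom$, $u\in\X_n^\pom$, all have \emph{exactly} the same radius $R\prod_{j\le n}\lambda_{\omega_j}$ (the rules are homogeneous), so the $M$-equivalence is immediate from disjointness and the diameter bound you already wrote down.
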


\begin{proof} For each $n \in \mathbb{N}$ let us define
$$
\theta_n(\omega) : = \sum_{u \in \X_n^\pom} \big( \eta^{(\omega)}( B^{(\omega)}_{u}) \big)^q.
$$
Note that we have $$D^q(\eta^{(\omega)}) = \lim_{n\to \infty} \frac{\log{\theta_{n}(\omega)}}{(q-1)\log(\lambda_{\omega_1} \cdots \lambda_{\omega_n})}.$$
Indeed, this follows from the definition of $L_n^{(\omega)}$ in Equation \eqref{eq:defln}, and the fact that the families $\mathfrak{D}_n^{(\omega)}$ and $\{B^{(\omega)}_{u} : |u|=n\}$ are $C$-equivalent on $\supp(\eta^\pom)$ for some $C>0$, by reasoning as in the proof of Equation \eqref{correlacion}.

Observe that in the $n$-th step of the construction of $\mathcal{C}^{(\omega)}$ we have $k_{\omega_1} \cdots k_{\omega_n}$ disks, each of measure $p_{\omega_1 , i_1} \cdots p_{\omega_n , i_n }$ (for a given choice of $i_j \in \{1, \cdots, k_{\omega_j} \}$). Then
\begin{align*}
\theta_n(\omega) &= \sum_{i_1 \in \{1, \cdots , k_{\omega_1}\}} \cdots \sum_{i_n \in \{1, \cdots , k_{\omega_n}\}} p_{\omega_1 , i_1}^q \cdots \;p_{\omega_n , i_n }^q \\
&= \prod_{j=1}^n (p_{\omega_j , 1}^q + \cdots + p_{\omega_j , k_{\omega_j}}^q ).
\end{align*}
Let $H(\omega):= \log{(p_{\omega_1,1}^q + \cdots + p_{\omega_1, k_{\omega_1}}^q )}$ (which is obviously continuous, since it depends only on $\omega_1$). Then notice that $\log{\theta_{n}(\omega)} = \sum_{j=1}^n H(\mathbf{T}^{j-1} (\omega))$.
Let $\mathcal{G}$ denote the set of $\mu$-generic points. If $\omega \in \mathcal{G}$, then
\begin{equation}\label{tau}
\frac{1}{n} \log{\theta_{n}(\omega)}   \longrightarrow \int H(\widetilde{\omega}) \,d\mu(\widetilde{\omega}) =  \int \log{(p_{\widetilde{\omega}_1,1}^q + \cdots + p_{\widetilde{\omega}_1, k_{\widetilde{\omega}_1}}^q )} \,d\mu(\widetilde{\omega}).
\end{equation}
Similarly, it was shown in the proof of Lemma \ref{lem:limite} that for any $\omega\in\mathcal{G}$,
\begin{equation}\label{segundo}
\frac{\log(\lambda_{\omega_1} \cdots \lambda_{\omega_n})}{n}  \longrightarrow
\int \log (\lambda_{\widetilde{\omega}_1}) d\mu (\widetilde{\omega}) .
\end{equation}

Therefore, by equations \eqref{tau} and \eqref{segundo}, we conclude
\begin{align*}
(q-1)D^q(\eta^{(\omega)}) &= \lim_{n\to +\infty} \frac{\log{\theta_{n}(\omega)}}{\log(\lambda_{\omega_1} \cdots \lambda_{\omega_n})} = \lim_{n\to +\infty} \frac{\frac{1}{n}\log{\theta_{n}(\omega)}}{\frac{1}{n}\log(\lambda_{\omega_1} \cdots \lambda_{\omega_n})} \\
&= \frac{\int \log{(p_{\widetilde{\omega}_1,1}^q + \cdots + p_{\widetilde{\omega}_1, k_{\widetilde{\omega}_1}}^q )} \,d\mu(\widetilde{\omega})}{\int \log (\lambda_{\widetilde{\omega}_1}) \,d\mu (\widetilde{\omega})}.
\end{align*}
This ends the proof.
\end{proof}

By applying this to the case in which $\mu$ is a Bernoulli measure, we obtain the following consequence of Theorem \ref{thm:main-projections}.
\begin{corollary} \label{cor:projections-product}
Let the family $\oeta^\pom$ be given by \eqref{eq:eta-product}, and suppose \eqref{eq:strong-separaton} holds. Let $r=(r_1,\ldots,r_N)$ be a probability vector and let $\mu$ be the $r$-Bernoulli measure. Finally, assume $\alpha_i/\pi$ is irrational for some $i$ with $r_i>0$.

Then for each $q\in (1,2]$ and each $\mu$-generic $\om$
\[
D_q(\Pi_v\eta^\pom) = \min(D(q),1) \quad\text{for all }v\in S^1,
\]
where
\[
D(q)= \frac{\sum_{i=1}^N  r_i \log(p_{i,1}^q + \cdots + p_{i, k_i}^q)}{ (q-1)\sum_{i=1}^N r_i \log (\lambda_i) }.
\]
Furthermore, the convergence of $-\frac{\log\mathcal{C}_{\Pi_v\eta^\pom}^q(n)}{(q-1)n}$ to $\min(D(q),1)$ is uniform in $v\in S^1$.
\end{corollary}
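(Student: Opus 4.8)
The plan is to deduce the corollary from Theorem~\ref{thm:main-projections} together with Lemma~\ref{lem:value-Lqdim}; essentially everything is bookkeeping except for one ergodicity statement. The family $\oeta^\pom$ in \eqref{eq:eta-product} consists of product measures, so it satisfies (a)--(c) (with $K=1$), the maps have the form \eqref{eq:form-rules}, and the $r$-Bernoulli measure $\mu$ is ergodic and $\bt$-invariant. Thus the only hypothesis of Theorem~\ref{thm:main-projections} that needs an argument is the ergodicity of $\mu\times\mathcal{L}$ for the skew-product $\mathbf{S}(\omega,v)=(\mathbf{T}\omega,\alpha(\omega)v)$, $\alpha(\omega)=e^{-i\alpha_{\omega_1}}$.

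To prove this I would use the Fourier-analytic criterion for circle extensions. Expanding an $\mathbf{S}$-invariant $F\in L^2(\mu\times\mathcal{L})$ as $F(\omega,v)=\sum_{n\in\Z} c_n(\omega)v^n$, invariance gives $c_n(\mathbf{T}\omega)\,\alpha(\omega)^n=c_n(\omega)$ for each $n$. For $n=0$ this makes $c_0$ a $\bt$-invariant function, hence $\mu$-a.e.\ constant by ergodicity of $\mu$; for $n\neq0$, taking absolute values shows $|c_n|$ is $\bt$-invariant, hence constant, so if $c_n\not\equiv0$ we may assume $|c_n|\equiv1$. Setting $\gamma_i=e^{-in\alpha_i}\in S^1$ and letting $\mathcal{F}_k$ be the $\sigma$-algebra generated by the first $k$ coordinates, the i.i.d.\ structure of $\mu$ yields $\mathbb{E}[c_n\mid\mathcal{F}_k](\omega)=\gamma_{\omega_1}\cdots\gamma_{\omega_k}\,\mathbb{E}[c_n]$; since $c_n\not\equiv0$, the martingale convergence theorem forces $\mathbb{E}[c_n]\neq0$ and shows that the partial products $\gamma_{\omega_1}\cdots\gamma_{\omega_k}$ converge for $\mu$-a.e.\ $\omega$. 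Fix $i$ with $r_i>0$ and $\alpha_i/\pi\notin\mathbb{Q}$, so that $\gamma_i\neq1$; for $\mu$-a.e.\ $\omega$ the symbol $i$ occurs infinitely often, so along that subsequence consecutive partial products differ by the factor $\gamma_i\neq1$, contradicting convergence. Hence $c_n\equiv0$ for all $n\neq0$, $F$ is constant, and $\mu\times\mathcal{L}$ is ergodic. (Alternatively, one may quote a standard ergodicity criterion for group extensions of Bernoulli shifts.)

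It then remains to combine the two results. Theorem~\ref{thm:main-projections} produces a constant $D(q)$ such that $D_q(\Pi_v\eta^\pom)=\min(D(q),1)$ for all $v\in S^1$, uniformly, for $\mu$-a.e.\ $\om$, and $D_q(\eta^\pom)=D(q)$ for $\mu$-a.e.\ $\om$. On the other hand, Lemma~\ref{lem:value-Lqdim} (which applies thanks to \eqref{eq:strong-separaton}) gives that for \emph{every} $\mu$-generic $\om$ the $L^q$ dimension $D_q(\eta^\pom)$ exists and equals the right-hand side of \eqref{formuladimension}; since $\mu$-generic points have full $\mu$-measure and this value is constant on them, it must coincide with the abstract $D(q)$. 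Evaluating \eqref{formuladimension} for the $r$-Bernoulli $\mu$ gives $\int\log(p_{\widetilde\omega_1,1}^q+\cdots+p_{\widetilde\omega_1,k_{\widetilde\omega_1}}^q)\,d\mu=\sum_i r_i\log(p_{i,1}^q+\cdots+p_{i,k_i}^q)$ and $\int\log\lambda_{\widetilde\omega_1}\,d\mu=\sum_i r_i\log\lambda_i$, so $D(q)$ is exactly the expression in the statement. Finally, because $D_q(\eta^\pom)=D(q)$ holds for \emph{all} $\mu$-generic $\om$ (not merely $\mu$-a.e.), the last sentence of Theorem~\ref{thm:main-projections} upgrades the conclusion, including the uniform convergence of $-\log\mathcal{C}^q_{\Pi_v\eta^\pom}(n)/((q-1)n)$ to $\min(D(q),1)$, to every $\mu$-generic $\om$. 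The only genuinely non-routine step is the ergodicity argument of the second paragraph; the rest is substitution.
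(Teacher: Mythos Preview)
Your proof is correct and follows essentially the same route as the paper: verify the hypotheses of Theorem~\ref{thm:main-projections}, invoke Lemma~\ref{lem:value-Lqdim} to identify $D(q)$ explicitly and to get $D_q(\eta^\pom)=D(q)$ for all $\mu$-generic $\om$, and then use the final clause of Theorem~\ref{thm:main-projections} to upgrade from $\mu$-a.e.\ to all generic points. The only difference is that where the paper disposes of the ergodicity of $\mu\times\mathcal{L}$ by citing a classical result (Parry, \cite[Corollary~4.5]{Parry97}), you supply a direct Fourier/martingale argument; your argument is correct and self-contained, so this is a gain in expository completeness rather than a change of strategy.
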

\begin{proof}
Ergodicity of $\mu\times\mathcal{L}$ for the skew-product given in \eqref{eq:def-skew-product} is classical when $\mu$ is a Bernoulli measure (provided some $\alpha_i/\pi$ is irrational), see e.g. \cite[Corollary 4.5]{Parry97}.  The claim is then immediate from Theorem \ref{thm:main-projections} and Lemma \ref{lem:value-Lqdim}.
\end{proof}
An analogous result is available in the setting of Theorem \ref{thm:mainconvolutions} (with one of the sets deterministic). Since it is possible to construct explicit generic points for Bernoulli measures, the above corollary applies also to some deterministic constructions.

\subsection{Uniform box-counting estimates}

An upper bound on $\mathcal{C}^q_{\mu} (n)$ for $q>1$ yields (via H\"{o}lder's inequality) a lower bound on the number of cubes in $\mathfrak{D}_n$ hit by $\supp(\mu)$:
\begin{equation} \label{eq:Holder}
1 = \sum_{I\in\mathfrak{D}_n} \mu(I)  \le \#\{I\in\mathfrak{D}_n: I\cap \supp(\mu)\neq\varnothing\}^{1/q'} \mathcal{C}^q_{\mu} (n)^{1/q}.
\end{equation}
Together with our main results, this yields uniform lower box-counting bounds for the projections of the supports of the measures in question. We give one concrete example.
\begin{corollary}
Let $A\subset\R^2$ be a self-similar set, that is, $A=\bigcup_{i=1}^k f_i(A)$ for some contracting similarities $f_i$. If the orthogonal part of some $f_i$ is an irrational rotation, then for every $\varepsilon>0$ there is $\delta>0$ such that for all $n\in\N$ and $v\in S^1$, the projection $\Pi_v A$ hits at least $\delta\,2^{(\gamma-\varepsilon)n}$ intervals in $\mathfrak{D}_n$, where $\gamma=\min(\dim_H(A),1)$.
\end{corollary}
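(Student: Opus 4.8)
The plan is to deduce the corollary from Corollary~\ref{cor:projections-deterministic} together with the H\"older-type inequality \eqref{eq:Holder}, after replacing $A$ by a suitable homogeneous self-similar subset. Fix $\varepsilon>0$ and set $\gamma=\min(\hdim A,1)$. The first --- and main --- step is the reduction: I would produce a \emph{homogeneous}, strongly separated self-similar subset $A'\subseteq A$ whose defining maps include an irrational rotation, with $\hdim A'>\gamma-\varepsilon/2$. Homogeneity is made possible by the commutativity of planar rotations: grouping words $u=(u_1,\dots,u_m)$ --- over the original alphabet, or over the alphabet of an auxiliary strongly separated sub-IFS of $A$ of nearly full dimension (such sub-IFSs exist by a standard argument: extract, at a fine enough scale, a strongly separated sub-collection from the cylinders of a self-similar measure on $A$ of almost full dimension) --- according to their digit frequencies, all resulting maps $f_u$ share the \emph{same} contraction ratio $\prod_j\lambda_j^{n_j}$ and the \emph{same} rotation $\sum_j n_j\alpha_j$, so that a single frequency class already constitutes a homogeneous IFS. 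Choosing the frequencies so as to nearly realize the similarity dimension of the auxiliary sub-IFS (which, by the classical variational identity, equals the supremum of the entropy-to-Lyapunov ratio over weight vectors) costs only $o(1)$ in dimension; and by perturbing the frequency vector by a bounded amount --- negligible for the dimension --- one keeps $\sum_j n_j\alpha_j$ an irrational multiple of $\pi$, using that $f_{i_0}^{\,t}$ has irrational rotation for every $t\ge 1$ whenever $f_{i_0}$ does. (Possible orientation-reversing $f_i$ cause no essential difficulty: one passes to an iterate to work with orientation-preserving maps, and working at a fine scale keeps every dimension loss below $\varepsilon/2$.)

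Granting $A'$, let $\eta$ be the self-similar measure on $A'$ with uniform probability weights, i.e.\ $\eta$ is the image of the uniform Bernoulli measure $\oeta$ on the code space of $A'$ under the coding map; then $\oeta$ satisfies the hypothesis of Corollary~\ref{cor:projections-deterministic} with $K=1$. Because $A'$ is homogeneous and strongly separated, $\eta$ is Ahlfors-regular, so by Lemma~\ref{lem:value-Lqdim} (or a direct computation) $D_q(\eta)$ exists and equals $\hdim A'$ for every $q>1$; fix $q=2$, say. Corollary~\ref{cor:projections-deterministic} then gives $D_q(\Pi_v\eta)=\min(D_q(\eta),1)=\min(\hdim A',1)>\gamma-\varepsilon/2$ for \emph{every} $v\in S^1$, with the convergence of $-\tfrac{\log\mathcal{C}^q_{\Pi_v\eta}(n)}{n(q-1)}$ to this limit being \emph{uniform} in $v$. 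Consequently there is $n_0$ such that $\mathcal{C}^q_{\Pi_v\eta}(n)\le 2^{-(\gamma-\varepsilon)\,n(q-1)}$ for all $n\ge n_0$ and all $v\in S^1$.

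To finish I would apply \eqref{eq:Holder} to the measure $\Pi_v\eta$, whose support lies in $\Pi_v(A')\subseteq\Pi_v(A)$; this yields, for $n\ge n_0$ and all $v\in S^1$,
\[
\#\{I\in\mathfrak{D}_n : I\cap\Pi_v A\neq\varnothing\}\ \ge\ \mathcal{C}^q_{\Pi_v\eta}(n)^{-1/(q-1)}\ \ge\ 2^{(\gamma-\varepsilon)n}.
\]
Setting $\delta=\min\{1,\,2^{-(\gamma-\varepsilon)n_0}\}$ then handles the finitely many $n<n_0$ as well, since the left-hand side is always at least $1$; this proves the corollary.

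The only nontrivial ingredient is the reduction in the first paragraph: extracting from $A$ a homogeneous, strongly separated self-similar subset of almost full Hausdorff dimension that still carries an irrational rotation. Given the standard existence of strongly separated sub-systems of nearly full dimension and the commutativity of rotations, the remaining bookkeeping (orientation, the choice of frequency vector, retaining the irrational rotation, and keeping each dimension loss below $\varepsilon/2$) is routine but is where the real work lies; everything downstream is a short consequence of Corollary~\ref{cor:projections-deterministic} and \eqref{eq:Holder}.
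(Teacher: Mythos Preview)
Your proposal is correct and follows essentially the same route as the paper: reduce to a homogeneous, strongly separated self-similar subset $A'\subseteq A$ of nearly full dimension carrying an irrational rotation, take the natural (uniform Bernoulli) measure $\eta$ on $A'$, apply Corollary~\ref{cor:projections-deterministic} to get uniform control of $\mathcal{C}^q_{\Pi_v\eta}(n)$, and conclude via \eqref{eq:Holder}. The only difference is that the paper outsources the reduction step to \cite[Lemma~4.2]{Shmerkin15}, whereas you sketch the construction directly via frequency classes; your additional explicit treatment of small $n$ via the choice of $\delta$ is a detail the paper leaves implicit.
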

\begin{proof}
We may assume that $f_i(x)=\lambda R_\alpha(x)+t_i$ where $\alpha/\pi$ is irrational and, moreover, the strong separation condition holds. Indeed, any planar self-similar set $A$ for which one of the generating maps contains an irrational rotation, contains self-similar sets of this special form and dimension arbitrarily close to that of $A$, see e.g. \cite[Lemma 4.2]{Shmerkin15}.

If $\oeta$ is the $(\tfrac 1k,\ldots,\tfrac 1k)$-Bernoulli measure on $\{1,\ldots,k\}^\N$, and $\eta$ is its projection onto $A$ via the coding map, then it is well known  that $D_2(\eta)=\dim_H(A)=\log k/|\log\lambda|$. The claim now follows from Corollary \ref{cor:projections-deterministic} and \eqref{eq:Holder}.
\end{proof}

\subsection{Projections of non-homogeneous self-similar measures}
\label{subsec:hochman}

We use an integral representation of self-similar measures to recover a result from \cite{HochmanShmerkin12} on the Hausdorff dimension of projections planar self-similar measures. Let $f_j(x) = \lambda_j R_{\alpha_j}(x)+t_j$, $j=1,\ldots,k$ be contractive similarities (i.e. $\lambda_j\in (0,1)$). Let $\overline{p}=(\overline{p}_1,\ldots,\overline{p}_k)$ be a probability vector and let $\nu$ the corresponding self-similar measure. That is, $\nu$ is the projection of the $\overline{p}$-Bernoulli measure $\overline{\nu}$ under the coding map $\Delta$ given by
\[
\{ \Delta(u) \} = \bigcap_{n=1}^\infty f_{u_1}\cdots f_{u_n}(B),
\]
where $B$ is a large enough ball that $f_j(B)\subset B$ for all $j$.

Fix a large integer $\ell$. For each $u\in \{1,\ldots,k\}^\ell$, let $N_j(u)$ count the number of times the symbol $j$ appears in $u$, and write $N(u)=(N_1(u),\ldots,N_k(u))$. Note that $N$ takes values in
\[
\Sigma:= \left\{ (\ell_1,\ldots,\ell_k): \ell_i\ge 0, \sum_i \ell_i=\ell\right\} \subset \{0,1,\ldots,\ell\}^k,
\]
so in particular $\#\Sigma\le(\ell+1)^k$  (this is a rough estimate, but sufficient for us; the key is that it has polynomial size in $\ell$).

Also, if $N(u)=N(v)$, then the maps $f_u$ and $f_v$ have the same linear part (and possibly different translation parts), where as usual $f_u=f_{u_1}\circ\cdots\circ f_{u_\ell}$. Hence, for each $\sigma\in\Sigma$, $\{ f_u: N(u)=\sigma\}$ is a valid rule in the sense of Section \ref{subsec:themodel}.

Our goal is to disintegrate $\overline{\nu}$ over the fibers of the map
\[
(u_n)_{n\in\N}\mapsto (N(u_{(j-1)\ell+1} \ldots u_{j\ell}))_{j\in\N}
\]
that splits $u$ into blocks of length $\ell$ and applies $N$ to each block. Although such disintegration exists in a very general setting (see e.g. \cite[Chapter 5]{EinWar11}), in this simple setting there is an explicit expression, which we now describe.

For each $\sigma\in\Sigma$, write
\[
r_\sigma= \sum_{u\in\{1,\ldots,k\}^\ell:N(u)=\sigma} \overline{p}_{u_1}\cdots \overline{p}_{u_\ell} =: \sum_{u\in\{1,\ldots,k\}^\ell:N(u)=\sigma} \overline{p}_u.
\]
Consider the conditional probability $p_\sigma$ on the fiber $\{ u\in\{1,\ldots,k\}^\ell: N(u)=\sigma\}$, extended to all of $\{1,\ldots,k\}^\ell$ by assigning zero mass to the complement of the fiber. Formally, $p_{\sigma,u} = \overline{p}_u/r_\sigma$ if $N(u)=\sigma$, and $p_{\sigma,u}=0$ otherwise.

Note that one can sample a sequence $u=(u_1,\ldots,u_\ell)$ according to $\nu$ in the following way: choose $\sigma\in\Sigma$ according to the probability vector $r$; then choose $u$ according to the probability vector $p_\sigma$. Thanks to the product structure of $\nu$, this extends to infinite sequences as follows.

For each $\om\in\Sigma^\N$, let $\oeta^\pom$ be the product measure $\prod_{i=1}^\infty p_{\om_i}$ (this is a measure on $\left(\{1,\ldots,k\}^\ell\right)^\N$, which we can identify with $\{1,\ldots,k\}^\N$ in the canonical way). Explicitly,
\[
 \oeta^\pom([u_1,\ldots, u_{n\ell}]) = p_{\om_1,(u_1\ldots u_\ell)} \cdots p_{\om_j, (u_{(n-1)\ell+1},\ldots,u_{n\ell})}.
\]
Finally, write $\mu$ for the $r$-Bernoulli measure on $\Sigma^\N$.

\begin{lemma}
\[
\overline{\nu}(\cdot) = \int_{\Sigma^\N} \oeta^\pom(\cdot) \,d\mu(\om).
\]
\end{lemma}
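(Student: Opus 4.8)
The plan is to verify the identity on cylinder sets and then invoke uniqueness of Borel measures. First I would observe that the right-hand side genuinely defines a Borel probability measure on $\{1,\ldots,k\}^\N$: for a cylinder $A$ the map $\om\mapsto\oeta^\pom(A)$ is locally constant (it depends on only finitely many coordinates of $\om$), hence Borel measurable, and measurability of $\om\mapsto\oeta^\pom(A)$ for an arbitrary Borel set $A$ then follows by a standard Dynkin-class argument; countable additivity of $A\mapsto\int\oeta^\pom(A)\,d\mu(\om)$ is inherited from that of each $\oeta^\pom$, and the total mass is $1$ because each $p_\sigma$ is a probability vector on $\{1,\ldots,k\}^\ell$ (as $\sum_{u:N(u)=\sigma}p_{\sigma,u}=r_\sigma/r_\sigma=1$) and $\mu$ is a probability measure (as $\sum_{\sigma\in\Sigma}r_\sigma=\big(\sum_i\overline p_i\big)^\ell=1$). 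Since the cylinders form an algebra generating the Borel $\sigma$-algebra, it suffices to prove that $\overline\nu$ and the right-hand side agree on every cylinder, and since every cylinder is a finite disjoint union of cylinders whose length is a multiple of $\ell$, it is in fact enough to treat cylinders of the form $[u_1,\ldots,u_{n\ell}]$.

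For such a cylinder I would group the coordinates into the $n$ blocks $w^{(j)}=(u_{(j-1)\ell+1},\ldots,u_{j\ell})$, $j=1,\ldots,n$. By multiplicativity of the Bernoulli weights, $\overline\nu([u_1,\ldots,u_{n\ell}])=\prod_{j=1}^n\overline p_{w^{(j)}}$. On the other side, the explicit formula for $\oeta^\pom$ gives $\oeta^\pom([u_1,\ldots,u_{n\ell}])=\prod_{j=1}^n p_{\om_j,w^{(j)}}$, a function of $(\om_1,\ldots,\om_n)$ only; since $\mu$ is the $r$-Bernoulli measure the integral factorizes as $\prod_{j=1}^n\big(\sum_{\sigma\in\Sigma}r_\sigma\,p_{\sigma,w^{(j)}}\big)$, and in each factor only the term $\sigma=N(w^{(j)})$ survives, contributing $r_{N(w^{(j)})}\cdot\big(\overline p_{w^{(j)}}/r_{N(w^{(j)})}\big)=\overline p_{w^{(j)}}$. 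Hence $\int\oeta^\pom([u_1,\ldots,u_{n\ell}])\,d\mu(\om)=\prod_{j=1}^n\overline p_{w^{(j)}}=\overline\nu([u_1,\ldots,u_{n\ell}])$, which is the desired identity on this cylinder, and the lemma follows.

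As for bookkeeping: if some $\overline p_i=0$ one simply discards the symbol $i$ (it is $\overline\nu$-null and carries zero $\oeta^\pom$-mass for every $\om$), so one may assume all $\overline p_i>0$; then $r_\sigma>0$ for every $\sigma$ with $N^{-1}(\sigma)\neq\varnothing$, every $p_\sigma$ is unambiguously defined, and the inner sum above has no $0/0$ issue. I do not expect any genuine obstacle here: the substance is the one-line evaluation of the two sides on a cylinder, and the only points requiring a modicum of care are the reduction to cylinders of length a multiple of $\ell$ and the measurability of $\om\mapsto\oeta^\pom(A)$, both of which are routine measure theory.
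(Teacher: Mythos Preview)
Your proof is correct and follows essentially the same approach as the paper: both reduce to cylinders of length a multiple of $\ell$, split into blocks, and use that in the factorized integral only the term $\sigma=N(w^{(j)})$ contributes, yielding $\overline{p}_{w^{(j)}}$. Your version is more careful about the preliminary measure-theoretic housekeeping (measurability of $\om\mapsto\oeta^\pom(A)$, the reduction to block-length cylinders, and the $r_\sigma>0$ bookkeeping), which the paper leaves implicit.
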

\begin{proof}
It is enough to check that both measures agree on any cylinder $A=[i_1 \ldots i_{n \ell}] =:[v_1\ldots v_n]$ where $v_j=(i_{(j-1)\ell+1}\ldots i_{j\ell})$ (since any cylinder splits as a finite union of such cylinders).  But
\begin{align*}
  \int \oeta^\pom(A) \,d\mu(\om) &= \int_{\Sigma^\N} p_{\om_1,v_1}\cdots p_{\om_n,v_n}\, d\mu(\om)\\
  &= \int_{\om_i=N(v_i)} p_{\om_1,v_1}\cdots p_{\om_n,v_n}\, d\mu(\om)\\
  &= \prod_{j=1}^n r_{N(v_j)} \prod_{j=1}^n p_{N(v_j),v_j}\\
  &= \prod_{j=1}^n \overline{p}_{v_j} = \overline{\nu}(A).
\end{align*}
\end{proof}

We have defined things so that the coding maps $\Delta_\om$ agree with the original coding map $\Delta$ for $\nu$, after the usual identification of $\left(\{1,\ldots,k\}^\ell\right)^\N$ with $\{1,\ldots,k\}^\N$. Hence, it follows from the last lemma that also
\begin{equation} \label{eq:integral-representation}
\nu = \int \eta^\pom \,d\mu(\om).
\end{equation}
This is the disintegration we referred to above, and it is preserved under orthogonal projections.

Unfortunately, $L^q$ dimensions do not play nicely with integral representations, but Hausdorff dimension does. This allows us to recover, via a rather different proof which avoids the machinery of measure-valued processes, the following result which was first obtained in \cite{HochmanShmerkin12} (we note, however, that the methods of \cite{HochmanShmerkin12} extend to higher dimensions, while our approach breaks down in dimension $d\ge 3$).

\begin{corollary}
Suppose $f_i$, $\overline{p}$ and $\nu$ are as above, and assume further that the separation condition \eqref{eq:strong-separaton} holds. Then
\[
\dim_H(\Pi_v\nu) = \min(\dim_H\nu,1)\quad\text{for all }v\in S^1.
\]
\end{corollary}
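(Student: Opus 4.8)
The plan is to deduce the statement from Theorem~\ref{thm:main-projections} (in the form of Corollary~\ref{cor:projections-product}), applied to the random family $\{\eta^\pom\}_{\om\in\Sigma^\N}$ of \S\ref{subsec:hochman}, transferring the information back to $\nu$ via the integral representation \eqref{eq:integral-representation}. The bound $\dim_H(\Pi_v\nu)\le\min(\dim_H\nu,1)$ is immediate (Lipschitz maps do not increase dimension, and the image lies in a line), so only the lower bound needs proof. Since push-forward commutes with integration, \eqref{eq:integral-representation} gives $\Pi_v\nu=\int\Pi_v\eta^\pom\,d\mu(\om)$ for every $v\in S^1$. I would then use two elementary properties of the (lower) Hausdorff dimension of a measure: first, $\underline{D}_q\rho\le\dim_H\rho$ whenever $q>1$; and second, if $\rho=\int\rho^\pom\,d\mu(\om)$ and $\dim_H\rho^\pom\ge s$ for $\mu$-a.e.\ $\om$, then $\dim_H\rho\ge s$ (indeed, if $\rho(A)>0$ then $\rho^\pom(A)>0$ on a set of positive $\mu$-measure, on which $\dim_H A\ge\dim_H\rho^\pom\ge s$). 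Combining these reduces the problem to lower-bounding $\underline{D}_q(\Pi_v\eta^\pom)$ for $\mu$-a.e.\ $\om$.

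Next I would check that the setup of \S\ref{subsec:hochman} meets the hypotheses of Corollary~\ref{cor:projections-product}: the measures $\oeta^\pom$ are product measures, hence satisfy (a)-(c) with $K=1$; strong separation \eqref{eq:strong-separaton} for $\{f_1,\dots,f_k\}$ implies the analogous separation for each of the $\Sigma$-indexed rules (the disks $f_u(B)$, $f_{u'}(B)$ are disjoint for distinct words $u\neq u'$ of the same length, by induction on the first coordinate where they differ); and, assuming as in Corollary~\ref{cor:projections-product} that some $\alpha_{i_0}/\pi$ is irrational with $\overline{p}_{i_0}>0$, the rule $\sigma_0$ supported entirely on the symbol $i_0$ has $r$-weight $\overline{p}_{i_0}^{\,\ell}>0$ and rotation angle $\ell\alpha_{i_0}\notin\pi\mathbb{Q}$, so the associated skew-product is ergodic for $\mu\times\mathcal{L}$. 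Then Corollary~\ref{cor:projections-product} yields, for each $q\in(1,2]$, $\mu$-a.e.\ $\om$ and all $v$, that $\dim_H(\Pi_v\eta^\pom)\ge\underline{D}_q(\Pi_v\eta^\pom)=\min(D(q),1)$ with $D(q)$ given by the explicit formula there; by the previous paragraph, $\dim_H(\Pi_v\nu)\ge\min(D(q),1)$ for all $v\in S^1$ and all $q\in(1,2]$.

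The final step is to let $q\to1^+$ and then $\ell\to\infty$. Since $q\mapsto D(q)$ is non-increasing, $\sup_{q\in(1,2]}D(q)=\lim_{q\to1^+}D(q)$; this is the computation where the real content lies. Writing $\chi=-\sum_j\overline{p}_j\log\lambda_j>0$, one has $\sum_\sigma r_\sigma\log\lambda_\sigma=\sum_j\big(\sum_\sigma r_\sigma\ell_j(\sigma)\big)\log\lambda_j=\ell\sum_j\overline{p}_j\log\lambda_j$, because $\sum_\sigma r_\sigma\ell_j(\sigma)=\sum_{u\in\{1,\dots,k\}^\ell}\overline{p}_uN_j(u)=\ell\,\overline{p}_j$; a Taylor expansion at $q=1$ gives $(q-1)^{-1}\log\big(\sum_{u:N(u)=\sigma}p_{\sigma,u}^{\,q}\big)\to-H(p_\sigma)$; and the entropy chain rule for the factorization ``draw $\sigma\sim r$, then $u\sim p_\sigma$'' of an $\ell$-fold i.i.d.\ draw from $\overline{p}$ gives $\ell H(\overline{p})=H(r)+\sum_\sigma r_\sigma H(p_\sigma)$. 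Hence $\lim_{q\to1^+}D(q)=\frac{\ell H(\overline{p})-H(r)}{\ell\chi}$, and since $H(r)\le\log\#\Sigma\le k\log(\ell+1)=o(\ell)$, this converges to $H(\overline{p})/\chi=\dim_H\nu$ as $\ell\to\infty$ (the last equality being the classical similarity-dimension formula under strong separation). Therefore $\dim_H(\Pi_v\nu)\ge\min(\dim_H\nu,1)$ for every $v\in S^1$, which together with the trivial upper bound finishes the proof. I expect the only genuine obstacle to be this two-parameter limit and the entropy bookkeeping identifying $\lim_{\ell\to\infty}\lim_{q\to1^+}D(q)$ with $\dim_H\nu$; everything else follows routinely from the results already in place.
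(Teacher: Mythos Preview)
Your proposal is correct and follows essentially the same route as the paper: disintegrate $\nu$ over $\Sigma^\N$, apply Corollary~\ref{cor:projections-product} to the fibre measures, use $\underline{D}_q\le\dim_H$ together with the behaviour of Hausdorff dimension under integral representations, then let $q\to 1^+$ and $\ell\to\infty$ using the bound $H(r)\le\log\#\Sigma\le k\log(\ell+1)$. Your entropy chain-rule packaging of the $q\to 1^+$ limit is a tidy restatement of the paper's direct algebraic manipulation, and you are more explicit than the paper about transferring the irrational-rotation hypothesis to the $\Sigma$-indexed rules; otherwise the arguments coincide.
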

\begin{proof}
It follows from the definition of Hausdorff dimension of a measure and the representation \eqref{eq:integral-representation} that if $\dim_H(\Pi_v\oeta^\pom)\ge s$ for $\mu$ almost all $\om$, then $\dim_H\Pi_v\nu\ge s$. We will show that the former holds for all $v\in S^1$, with a value of $s$ that can be made arbitrarily close to $\min(\dim_H\nu,1)$ by taking $\ell$ large enough.

First of all, the separation assumption implies that
\[
\dim_H\nu = \frac{\sum_{i=1}^k \overline{p}_i \log(\overline{p}_i)  }{\sum_{i=1}^k \overline{p}_i \log(\lambda_i)} = \frac{\sum_{u\in\{1,\ldots,k\}^\ell}\overline{p}_u \log(\overline{p}_u) }{\sum_{u\in\{1,\ldots,k\}^\ell} \overline{p}_u \log(\lambda_u)},
\]
where $\lambda_u=\lambda_{u_1}\cdots\lambda_{u_k}$, see e.g. \cite[Theorem 5.2.5]{Edgar98}.

On the other hand, we obtain from Corollary \ref{cor:projections-product} that for all $\mu$-generic $\om$, all $v\in S^1$, and all $q\in (1,2]$,
\[
\dim_H(\Pi_v\eta^\pom)  \ge D_q(\Pi_v\eta^\pom) =  \min\left(\frac{\sum_{\sigma\in\Sigma}  r_\sigma \log(\sum_{u\in\{1,\ldots,k\}^\ell} p_{\sigma,u}^q)}{ (q-1)\sum_{\sigma\in\Sigma} r_\sigma \log (\lambda_\sigma) },1\right),
\]
where  $\lambda_\sigma=\lambda_u$ for any $u$ such that $N(u)=\sigma$. (For the left-most inequality, recall that $L^q$ dimension, $q>1$, is always a lower bound for Hausdorff dimension.) Letting $q\to 1^+$, and recalling the definitions of $r_\sigma, p_{\sigma,u}$, we infer
\begin{align*}
\dim_H(\Pi_v\eta^\pom) &\ge  \min\left(\frac{\sum_{\sigma\in\Sigma}  r_\sigma \sum_{u\in\{1,\ldots,k\}^\ell} p_{\sigma_u}\log p_{\sigma_u}   }{\sum_{\sigma\in\Sigma} r_\sigma \log (\lambda_\sigma)},1\right) \\
&=  \min\left(\frac{\sum_{u\in\{1,\ldots,k\}^\ell}  \overline{p}_u \log(\overline{p}_u/r_{N(u)})}{\sum_{u\in\{1,\ldots,k\}^\ell} \overline{p}_u \log (\lambda_u)},1\right) \\
&\ge \min(\dim_H\nu,1) - \frac{\sum_{u\in\{1,\ldots,k\}^\ell} \overline{p}_u \log(r_{N(u)})}{\sum_{u\in\{1,\ldots,k\}^\ell}  \overline{p}_u \log (\lambda_u)}\\
&= \min(\dim_H\nu,1) - \frac{\sum_{\sigma\in\Sigma} r_\sigma \log(r_\sigma)}{\ell \sum_{i=1}^k  \overline{p}_i \log (\lambda_i)}\\
&\ge \min(\dim_H\nu,1) + \frac{\log\#\Sigma}{\ell \sum_{i=1}^k  \overline{p}_i \log (\lambda_i)}\\
&\ge \min(\dim_H\nu,1) + \frac{k\log(\ell+1)}{\ell \sum_{i=1}^k  \overline{p}_i \log (\lambda_i)}\\
&\longrightarrow \min(\dim_H\nu,1) \text{ as } \ell\to\infty,
\end{align*}
where in the fifth line we used that the entropy of a probability vector of length $M$ is bounded by $\log M$.  This completes the proof.
\end{proof}

%\bibliographystyle{plain}
%\bibliography{bibfile}

\end{document}